\newtheorem{thm}{Theorem}
\newtheorem{defn}{Definition}
\newtheorem{fact}{Fact}
\newtheorem{prop}{Proposition}
\newtheorem{lem}{Lemma}
\newtheorem{rem}{Remark}
\newtheorem{cor}{Corollary}
\newtheorem{obs}{Observation}
\newcommand{\R}{\hat{R}}
\newcommand{\Q}{\hat{Q}}
\renewcommand{\'}{^\top}
\newcommand{\inv}{^{-1}}
\newcommand{\psd}{^\dagger}
\newcommand{\modif}{\color{black}}
\def\thickhline{\noalign{\hrule height1pt}}
\def\BibTeX{{\rm B\kern-.05em{\sc i\kern-.025em b}\kern-.08em
    T\kern-.1667em\lower.7ex\hbox{E}\kern-.125emX}}
\begin{document}
	
\title{Interconnection of (\textit{Q,S,R})-Dissipative Systems in Discrete Time}
\author{Andrea Martinelli, Ahmed Aboudonia, and John Lygeros
\thanks{Research supported by the European Research Council under the Horizon 2020 Advanced Grant No. 787845 (OCAL).}
\thanks{Andrea Martinelli, Ahmed Aboudonia, and John Lygeros are with the Automatic Control Laboratory, Swiss Federal Institute of Technology (ETH) Zurich, Physikstrasse 3, 8092 Zurich, Switzerland (e-mails: andremar@ethz.ch, ahmedab@ethz.ch, lygeros@ethz.ch). }
}

\maketitle

\begin{abstract}
Discrete-time systems cannot be passive unless there is a direct feedthrough from the input to the output. For passivity-based control to be exploited nevertheless, some authors introduce virtual outputs, while others rely on continuous-time passivity and then apply discretization techniques that preserve passivity in discrete time. Here we argue that quadratic supply rates incorporate and extend the effect of virtual outputs, allowing one to exploit dissipativity properties directly in discrete time. We derive {\modif decentralized (\textit{Q,S,R})-}dissipativity conditions for a set of nonlinear systems interconnected with arbitrary topology, so that the overall network is guaranteed to be stable. For linear systems, we develop dissipative control conditions that are linear in the supply rate {\modif matrices}. To demonstrate the validity of our methods, we provide numerical examples in the context of islanded microgrids.
\end{abstract}

\begin{IEEEkeywords}
Decentralized Control, Discrete-Time Systems, Dissipativity theory, Interconnected Systems.
\end{IEEEkeywords}

\section{Introduction}
\label{sec:introduction}

Dissipativity theory, as introduced by J.C. Willems in the 1970's \cite{WillemsDissipativity}, concentrate on how dynamical systems store and exchange energy over time. Thanks to its compositional framework and its relation with Lyapunov stability theory, dissipativity is a powerful tool to study the stability of interconnected \textit{continuous-time} (CT) systems. Indeed, the dissipativity of large-scale systems can often be deduced from the dissipativity of individual subsystems and their interconnection \cite{PotaLocalDissipativity, ChopraPassivity, ArcakDissipativity, PulkitAutomatica}.

On the other hand, in disciplines such as biology, demography, ecology, economics, engineering, finance, or physics, many systems naturally evolve over \textit{discrete time} (DT) steps \cite{GalorDiscrete}. DT systems also arise whenever performing a digital implementation of a controller and, moreover, they are the main focus of modern control techniques such as reinforcement learning and model predictive control. Despite the efforts throughout decades, however, dissipativity theory for DT systems is still not as mature as its CT counterpart \cite{LinByrnesPassivity,ByrnesLossless,LinPassivity,NavarroLopezKYPDissipativity,SimpsonpPorcoDissipativity,BrogliatoDissipativity}, possibly due to the fact that the theory was adapted to the DT framework, instead of being redeveloped anew \cite[Ch. 9.2.2]{NavarroLopezThesis}.

A perhaps surprising but important fact is that DT systems without direct feedthrough from the input to the output cannot be passive. As systems without feedthrough are prevalent in science and engineering \cite{AstromFeedbackSystems}, for passivity-based control to be exploited despite of this, one can either introduce a virtual output \cite{LinByrnesPassivity,AhmedPassivity}, or rely on CT passivity and then apply discretization techniques that preserve passivity in the DT domain \cite{LailaDissipationSampling,StramigioliDiscretization,CostaPassivityPreservation,OishiPassivityDegradation}. A virtual output is an artificial transformation that is used in place of the original output to render the system passive with respect to the transformed variables, but does not come with a clear interpretation in terms of energy balance. 

Unfortunately, it is known that properties like passivity or stability can be lost under discretization \cite{BrogliatoDissipativity}. Significant efforts have been devoted to develop discretization techniques that preserve passivity under different conditions, usually obtained by selecting a small enough sampling time or by considering virtual outputs \cite{LailaDissipationSampling, StramigioliDiscretization, CostaPassivityPreservation, OishiPassivityDegradation}. Another problem that arises from discretization is how to preserve the CT model structure, which is crucial in the context of decentralized control. Indeed, most of the available techniques compromise the sparsity pattern of the matrices involved  \cite{FarinaDiscretization,SouzaDiscretization}. In general, if the system is nonlinear or partially unknown, preserving passivity properties or the model structure becomes an even more challenging task \cite{KazantisDiscretizationNonlinear}. Similar issues affect the port-Hamiltonian framework \cite{KotyczkaDTportHamiltonian}, even though efforts have been devoted to develop models directly in DT \cite{TalasilaPortHamiltonian}.

Our aim here is to develop decentralized dissipativity-based analysis and control methods directly for the DT model. The main contributions can be summarized as follows; (i) we show that quadratic supply rates incorporate and extend the concept of virtual output; (ii) we develop dissipative control conditions that are linear in the supply rate {\modif matrices}; (iii) we derive decentralized {\modif $(Q,S,R)$-}dissipativity conditions for a set of interconnected nonlinear DT systems that guarantee asymptotic stability of the network; (iv) we provide numerical examples in the context of islanded DC microgrids.

In Section~\ref{Sec:dissipativity} we show that passivity {\modif with respect to} virtual outputs are just a special case of $(Q,S,R)$-dissipativity {\modif with respect to} the true outputs, and we provide a necessary condition for $(Q,S,R)$-dissipativity in terms of the matrix $R$. In Section \ref{sec:feedbackdissipativity} we introduce the dissipative control problem for linear systems and we derive novel conditions that are linear in the supply rate {\modif matrices}. The interconnection model is introduced in Section~\ref{Sec:interconnection}, together with decentralized stability conditions based on {\modif individual} $(Q,S,R)$-dissipativity of the nonlinear subsystems. Finally, in Section~\ref{sec:microgrids}, we validate our theoretical results through numerical examples on islanded microgrid models. In the Appendixes the reader will find some important facts about matrix and graph theory, as well as novel results on Laplacian flows and bounds on the Laplacian matrix pseudoinverse.

\subsubsection*{Notation} We denote with $\mathbb{R}_{\ge 0}$ and $\mathbb{R}_{>0}$ the set of nonnegative and positive real numbers, respectively. Given a set $\mathcal{X}$ such that $0\in\mathcal{X}\subseteq \mathbb{R}^n$, we say that a function $f:\mathcal{X} \to \mathbb{R}$ with $f(0)=0$ is positive semidefinite (PSD) when $f(x)\in\mathbb{R}_{\ge 0}$ for all $x\in \mathcal{X}$, and positive definite (PD) when $f(x)\in\mathbb{R}_{>0}$ for all $x\in \mathcal{X} \setminus \{0\}$. Similarly, a symmetric matrix $A=A^\top\in\mathbb{R}^{n\times n}$ is PSD (resp. PD) when its quadratic form $f(x)=x^\top Ax$ is a PSD (resp. PD) function;  we denote this as $A\succeq 0$ (resp. $\succ 0$). The space of differentiable and twice-differentiable functions is denoted by $C^1$ and $C^2$, respectively. The smallest and largest eigenvalues of a {\modif symmetric} matrix $A=A^\top$ are denoted with $\lambda_{\text{min}}(A)$ and $\lambda_{\text{max}}(A)$, respectively, and its inertia is $\text{In}(A)=(\rho_{-},\rho_0,\rho_+)$ where $\rho_{-}$, $\rho_0$, and $\rho_+$ denotes the number of negative, zero, and positive eigenvalues of $A$, respectively. Finally, we represent a vector of ones with $\mathbf{1}$.

%%%%%%%%%%%%%%%%%%%%%%%%%%%%%%%%%%%%%%%%%%%%%%%%%%%%%%%%%%%%

\section{Dissipativity Theory for DT Systems} \label{Sec:dissipativity}

\begin{figure}
	\centering
	\begin{tikzpicture}[>=stealth']
		
		\node[rectangle,draw,minimum height= 2cm,minimum width= 3cm,label=90:Dynamical System,text width=2.5cm,align=center] (DTsys) {$\Delta$ storage \\ $V(x^+)-V(x)$};
		
		\node[above right of= DTsys,node distance=0.4cm] (circle) {};
		
		\draw [->,red!85!black,thick,domain=30:150] plot ({0.9*cos(\x)}, {0.9*sin(\x)});
		
		\draw [->,red!85!black,thick,domain=210:330] plot ({0.9*cos(\x)}, {0.9*sin(\x)});
		
		\node[left of= DTsys,node distance=3.5cm] (supplystart) {};
		
		\node[right of= DTsys,node distance=3.5cm] (dissipationend) {};
		
		\draw[->,decorate,decoration={snake,post length=2mm,amplitude=1mm,segment length=5mm},thick,red!85!black] (supplystart) -- node[above,black] {supply} node[below,black] {$s(y,u)$} (DTsys);
		
		\draw[->,decorate,decoration={snake,post length=2mm,amplitude=1mm,segment length=5mm},thick,red!85!black] (DTsys) -- node[above,black] {dissipation} node[below,black] {$\phi(x,u)$} (dissipationend);
		
	\end{tikzpicture}
	\caption{Power balance for a DT system. When the internal accumulation is less than the energy supplied, the system is said to be dissipative.}
	\label{Fig:energybalance}
\end{figure}
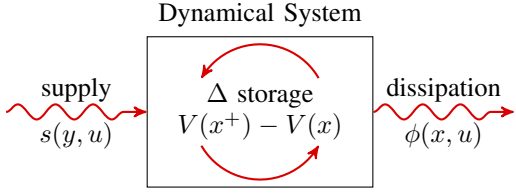

Consider the following nonlinear DT system,
\begin{subequations}\label{systemaffine}
	\begin{align}
		x^+ & = f(x,u) \label{dynamics}\\ 
		y & = h(x) \,, \label{output}
	\end{align}
\end{subequations}
where $x\in\mathbb{R}^n$ is the state, $u\in\mathbb{R}^m$ is the input or exogenous signal, $y\in\mathbb{R}^p$ is the output transformation, $f : \mathbb{R}^n \times \mathbb{R}^m \to \mathbb{R}^n$ is locally Lipschitz, and $h : \mathbb{R}^n \to \mathbb{R}^p$ is continuous with $f(0,0) = 0$ and $h(0)=0$.

 \begin{defn}\cite{ByrnesLossless} \label{def:dissipativity}
	The system \eqref{systemaffine} is \textit{locally dissipative} with respect to the supply rate $s:\mathbb{R}^p \times \mathbb{R}^m \to \mathbb{R}$, $s(0,0)=0$, when there exists a PSD storage function $V:\mathcal{X} \to \mathbb{R}_{\ge 0}$ such that, for all  $x,u \in \mathcal{X} \times \mathbb{R}^{m}$, 
	\begin{equation}\label{dissipation rate}
		V(x^+) - V(x) \le s(y,u)\,.
	\end{equation}
	If $\mathcal{X} = \mathbb{R}^{n}$ the system is simply called \textit{dissipative}. If $s(y,u) = y\' Q y + 2 y\'Su + u\' R u$ with $Q=Q^\top$ and $R=R^\top$, the system is said to be $(Q,S,R)$\textit{-dissipative}. When $m=p$ and $s(y,u) = y^\top u$, \textit{i.e.}, $(0,\tfrac{1}{2}I,0)$-dissipativity, the system is said to be \textit{passive}. The system is said to be \textit{strictly passive} if $V(x^+) - V(x) \le y^\top u - \omega(x)$ for some PD function $\omega : \mathbb{R}^n \to \mathbb{R}_{> 0}$ and \textit{output strictly passive} if $V(x^+) - V(x) \le y^\top u - \rho(y)$ for some $\rho : \mathbb{R}^m \to \mathbb{R}_{>0}$.
\end{defn}

{\modif It emerges from Definition \eqref{def:dissipativity} that $(Q,S,R)$-dissipativity is a special case of general dissipativity, and so is passivity with respect to $(Q,S,R)$-dissipativity.}
From the inequality \eqref{dissipation rate}, one can define the PSD dissipation rate  $\phi : \mathcal{X} \times \mathbb{R}^m \to \mathbb{R}_{\ge 0}$ as
\begin{equation}\label{powerbalance}
	\phi(x,u) = s(y,u) - V(x^+) + V(x) \,.
\end{equation}
Equation \eqref{powerbalance} can be interpreted as a power balance for the system \eqref{systemaffine}, as represented in Fig. \ref{Fig:energybalance}, where $s(y,u)$, $\phi(x,u)$, and $V(x^+) - V(x)$ are the power supplied, dissipated, and accumulated in the system, respectively.

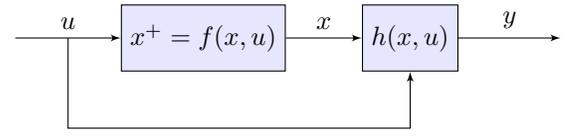
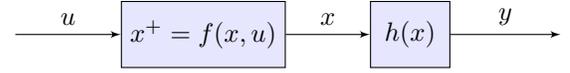
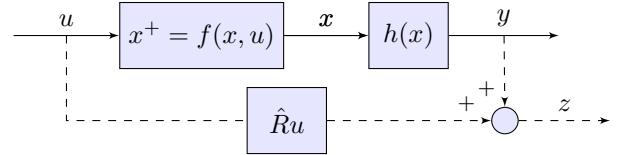
\begin{figure}
	\centering
	\begin{subfigure}[]{\columnwidth}
		\centering
		\begin{tikzpicture}[auto, node distance=2cm,>=latex']
			\tikzstyle{block} = [draw, fill=blue!10, rectangle, 
			minimum height=2.5em, minimum width=3em]
			\tikzstyle{input} = [coordinate]
			\tikzstyle{output} = [coordinate]
			\tikzstyle{middle} = [coordinate]
			\node [input, name=input] {};
			\node [block, right of = input, node distance=2.5cm] (system) {$x^+=f(x,u)$};
			\node [block, right of = system, node distance=2.75cm] (transf) {$h(x,u)$};
			\node [output, right of = transf] (output) {};
			\node [middle, below of = system, node distance=1.2cm] (middle) {};
			\draw [draw,->] (input) -- node [name=u] {$u$} (system);
			\draw [draw,->] (system) -- node {$x$} (transf);
			\draw [draw,->] (transf) -- node [name=y] {$y$} (output);
			\draw [draw,->] (u) |- node {} (middle) -| node {} (transf);
		\end{tikzpicture}
		\caption{Dynamical system with feedthrough.}
		\label{Fig feedthrough}
		\vspace{0.5cm}
	\end{subfigure}
	\vspace{0.5cm}
	\begin{subfigure}[]{\columnwidth}
		\centering
		\begin{tikzpicture}[auto, node distance=2cm,>=latex']
			\tikzstyle{block} = [draw, fill=blue!10, rectangle, 
			minimum height=2.5em, minimum width=3em]
			\tikzstyle{input} = [coordinate]
			\tikzstyle{output} = [coordinate]
			\tikzstyle{middle} = [coordinate]
			\node [input, name=input] {};
			\node [block, right of = input, node distance=2.5cm] (system) {$x^+=f(x,u)$};
			\node [block, right of = system, node distance=2.75cm] (transf) {$h(x)$};
			\node [output, right of = transf] (output) {};
			\node [middle, below of = system, node distance=1.2cm] (middle) {};
			\draw [draw,->] (input) -- node [name=u] {$u$} (system);
			\draw [draw,->] (system) -- node {$x$} (transf);
			\draw [draw,->] (transf) -- node [name=y] {$y$} (output);
		\end{tikzpicture}
		\caption{Dynamical system without feedthrough.}
		\label{Fig no feedthrough}
	\end{subfigure}
	\begin{subfigure}[]{\columnwidth}
		\centering
		\begin{tikzpicture}[auto, node distance=2cm,>=latex']
			\tikzstyle{block} = [draw, fill=blue!10, rectangle, 
			minimum height=2.5em, minimum width=3em]
			\tikzstyle{sum} = [draw, fill=blue!10, circle, minimum height=1em]
			\tikzstyle{input} = [coordinate]
			\tikzstyle{output} = [coordinate]
			\tikzstyle{middle} = [coordinate]
			\tikzstyle{virtual} = [coordinate]
			\node [input, name=input] {};
			\node [block, right of = input, node distance=2.5cm] (system) {$x^+=f(x,u)$};
			\node [block, right of = system, node distance=2.75cm] (transf) {$h(x)$};
			\node [output, right of = transf] (output) {};
			\node [middle, below of = system, node distance=1.2cm] (middle) {};
			\node [block, below right of = system, node distance=1.6cm] (Dmatrix) {$\R u$};
			\node [sum, right of = Dmatrix, node distance=2.9cm] (sum) {};
			\node [virtual, below of = sum, node distance=1.2cm] (virtual) {};
			\node [output, right of = sum, node distance=1.4cm] (outputz) {};
			\draw [draw,->] (input) -- node [name=u] {$u$} (system);
			\draw [draw,->] (system) -- node {$x$} (transf);
			\draw [draw,->] (transf) -- node [name=y] {$y$} (output);
			\draw [draw,->,dashed] (sum) -- node [name=z] {$z$} (outputz);
			\draw [draw,->,dashed] (u) |- node {} (Dmatrix) -- node [pos=0.85] {\footnotesize$+$} (sum);
			\draw [draw,->] (system) -- node {$x$} (transf);
			\draw [draw,->,dashed] (y) -- node [left,pos=0.75] {\footnotesize$+$} (sum);
		\end{tikzpicture}
		\caption{Dynamical system without feedthrough but with a virtual output.}
		\label{Fig feedthrough virtual}
	\end{subfigure}
	\caption{Block representation of a DT system (a) with feedthrough $y=h(x,u)$, (b) without feedthrough $y=h(x)$, and (c) with virtual output $z=y+\hat{R}u$. In the case (b), where $y$ does not depend directly on $u$, the system cannot be passive with respect to $y^\top u$. The system in configuration (c), instead, can be passive with respect to the virtual supply $z^\top u$.}
	\label{Fig:feedthrough}
\end{figure}

\subsection{Necessary and Sufficient Conditions}

 It is known that for DT systems to be passive, a necessary condition is that there exists a feedthrough term between the input and the output, as depicted in Fig. \ref{Fig feedthrough}, indicating that the control signal does influence the output directly \cite{LinByrnesPassivity, ByrnesLossless, LinPassivity}. On the other hand, systems without feedthrough as in Fig.~\ref{Fig no feedthrough} are the most commonly used in science and engineering for modelling dynamical systems \cite{AstromFeedbackSystems}. This motivates us to concentrate on systems without feedthrough like \eqref{systemaffine}. Note that the feedthrough is sometimes referred to as \textit{direct term} \cite{AstromFeedbackSystems}.

To understand why feedthrough is needed in the DT domain, consider the linear system $x^+=Ax+Gu$, $y=Cx+Du$, where $A\in\mathbb{R}^{n \times n}$, $G\in\mathbb{R}^{n \times m}$, $C\in\mathbb{R}^{p \times n}$, and $D\in\mathbb{R}^{p \times m}$. Then, for  $m=p$ and $s(y,u) = y\' u$, inequality \eqref{dissipation rate} is satisfied without loss of generality (see \cite{KottenstettePassivity}) if and only if there exists a PD quadratic storage function $V(x) = x\' Px$ such that
\begin{equation}\label{LMIpassivitylinear}
	\begin{bmatrix} A\' PA - P & A\' PG - \tfrac{1}{2}C\' \\ \star & D + G\' PG 
	\end{bmatrix} \preceq 0 \,.
\end{equation}
The linear matrix inequality (LMI) \eqref{LMIpassivitylinear} is feasible only if $D \preceq -G\' PG \preceq 0$, thus for systems without feedthrough ($D=0$) it does not admit any PD solution for $P$. 
On the other hand, when $y=Cx$ but $s(y,u)= y\' Q y + 2 y\'Su + u\' R u$, the LMI \eqref{LMIpassivitylinear} becomes
\begin{equation}\label{LMIdissipativitylinear}
	\begin{bmatrix} A\' PA - P - C\'QC & A\' PG - C\' S \\ \star & G\' PG - R 
	\end{bmatrix} \preceq 0 \,.
\end{equation}
In this case, $R$ takes the role of $D$, in the sense that the LMI \eqref{LMIdissipativitylinear} is verified only if $R\succeq G\' PG \succeq 0$. 

Necessary and sufficient conditions for a general nonlinear DT system to be dissipative have been derived in \cite{NavarroLopezKYPDissipativity} under the assumption that $V(f(x,u))$ and $s(y,u)$ are quadratic in $u$. On the other hand, necessary (but not sufficient) conditions for dissipativity under less restrictive assumptions are available in \cite{NavarroLopezKYPDissipativity} as a generalization of the necessary conditions for passivity developed in \cite{LinPassivity}. Consider the system \eqref{dynamics} with the output transformation $y=h(x,u)$ comprising a feedthrough, and assume the storage function and dissipation rate are in $C^2$. Then, since the dissipation rate is a PSD function, it is locally convex and $\tfrac{\partial^2 }{\partial u^2}\phi(x,u) \succeq 0$ in a neighbourhood of the origin. In case of passivity one has $s(y,u)=u^\top h(x,u)$ and, by differentiating \eqref{powerbalance} twice with respect to $u$, 
\begin{equation*}
	\begin{aligned}
		\frac{\partial^2 }{\partial u^2}&\phi(x,0) = \frac{\partial}{\partial u}h(x,0)+\frac{\partial}{\partial u}h^\top(x,0) \\
		& - \frac{\partial}{\partial u}f^\top(x,0) \left.\frac{\partial^2 V}{\partial \alpha^2} \right|_{\alpha=f(x,0)} \frac{\partial}{\partial u}f(x,0) \succeq 0 \,.
	\end{aligned}
\end{equation*}
Note that for systems without feedthrough $\frac{\partial}{\partial u}h(x,0)=0$, thus we are left with the necessary but contradictory condition $\frac{\partial}{\partial u}f^\top(x,0) \left.\frac{\partial^2 V}{\partial \alpha^2} \right|_{\alpha=f(x,0)} \frac{\partial}{\partial u}f(x,0) \preceq 0$. Indeed, since $\left.\frac{\partial^2 V}{\partial \alpha^2} \right|_{\alpha=f(x,0)} \succeq 0$, the previous inequality can only be satisfied with equality for some pathological cases. For example, in case of PD storage functions, a necessary condition for passivity would be that $\frac{\partial}{\partial u}f(x,0) = 0$ in a neighbourhood of the origin.
Next, {\modif we introduce a necessary condition for systems without feedthrough to be $(Q,S,R)$-dissipative.} 

\begin{lem}\label{corollary:R}
	{\modif The system \eqref{systemaffine} is $(Q,S,R)$-dissipative with $C^2$ storage function and $C^2$ dissipation rate only if $R\succeq 0$.}
\end{lem}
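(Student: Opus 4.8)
The plan is to exploit the fact that the dissipation rate $\phi(x,u)$ in \eqref{powerbalance} is PSD, hence attains a global minimum at the origin, and to read off the required sign condition on $R$ from the second-order optimality condition in the $u$-direction. Rather than working at a generic state as in the passivity computation above, I would freeze $x=0$: this collapses every contribution coming from $Q$ and $S$ (since $y=h(0)=0$) and isolates a clean quadratic-in-$u$ obstruction that directly exposes $R$.

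Concretely, I would define $g(u) := \phi(0,u)$. Using $h(0)=0$, the supply rate reduces to $u^\top R u$, so that
\begin{equation*}
g(u) = u^\top R u - V(f(0,u)) + V(0).
\end{equation*}
Because $f(0,0)=0$ one has $g(0) = -V(0)+V(0) = 0$, and since $\phi$ is PSD, $g(u)\ge 0 = g(0)$ for all $u$. Thus $u=0$ is a global minimizer of the $C^2$ function $g$, and the second-order necessary condition for a minimum yields $\nabla^2_u g(0) \succeq 0$.

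The core of the argument is then the computation of $\nabla^2_u g(0)$. Differentiating $u\mapsto V(f(0,u))$ twice and evaluating at $u=0$ produces two contributions: the quadratic term $J^\top\nabla^2 V(0)\,J$, where $J := \tfrac{\partial f}{\partial u}(0,0)$, and a term in which $\nabla V(f(0,0))=\nabla V(0)$ is contracted against $\tfrac{\partial^2 f}{\partial u^2}(0,0)$. At this point I would invoke that $V$ is a PSD $C^2$ storage function with a minimum at the origin, so $\nabla V(0)=0$, which annihilates the second contribution and gives $\nabla^2_u g(0) = 2R - J^\top\nabla^2 V(0)\,J$. Combining with $\nabla^2_u g(0)\succeq 0$, and noting that $\nabla^2 V(0)\succeq 0$ (again because the origin minimizes $V$) forces $J^\top\nabla^2 V(0)\,J\succeq 0$, I conclude $2R \succeq J^\top\nabla^2 V(0)\,J \succeq 0$, i.e. $R\succeq 0$.

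The main obstacle I anticipate is the bookkeeping in the chain rule for $\nabla^2_u V(f(0,u))$ together with the careful justification that the gradient term vanishes: the whole argument hinges on $\nabla V(0)=0$, which is precisely the feature of PSD storage functions that the preceding passivity discussion used implicitly. Everything else is routine, and — unlike the passivity computation — no assumption that $f$ is affine in $u$ is needed, since freezing $x=0$ confines all the relevant second-order data to a single critical point.
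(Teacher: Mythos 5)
Your proof is correct and uses the same core mechanism as the paper's: the dissipation rate $\phi$ in \eqref{powerbalance} is PSD, so its Hessian in $u$ at a minimizer must be PSD, and differentiating the power balance twice in $u$ yields $2R \succeq \frac{\partial}{\partial u}f^\top\, \nabla^2 V\, \frac{\partial}{\partial u}f \succeq 0$. The genuine difference is your specialization to $x=0$, and it buys real rigor: the paper differentiates at a generic $x$ near the origin and writes the inequality with only the quadratic chain-rule term, silently discarding the contribution $\nabla V(f(x,0)) \cdot \frac{\partial^2 f}{\partial u^2}(x,0)$, which for a general (non-control-affine) $f$ has no definite sign at generic $x$. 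By freezing $x=0$ you get $f(0,0)=0$ and, since $V$ is PSD with $V(0)=0$, the origin minimizes $V$, so $\nabla V(0)=0$ and this term vanishes rigorously --- precisely the justification the paper's displayed proof leaves implicit. Two minor remarks: first, your chain-rule computation (like the paper's) tacitly assumes $f$ is twice differentiable in $u$, whereas the standing assumption on $f$ is only local Lipschitz continuity; you could sidestep this entirely by noting that $u \mapsto V(f(0,u))$ is itself $C^2$ (being equal to $s(0,u)-\phi(0,u)+V(0)$) and attains its global minimum value $0$ at $u=0$, so that $2R = \nabla^2_u g(0) + \nabla^2_u V(f(0,u))\big|_{u=0}$ is a sum of two PSD matrices, with no derivative of $f$ ever taken. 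Second, invoking a \emph{global} minimizer is more than needed; a local minimum suffices for the second-order necessary condition, which is all the paper's neighbourhood-of-the-origin argument uses.
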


\begin{proof}
	First, we note that
	\begin{equation*}
		\frac{\partial^2}{\partial u^2} s(y,u) = \frac{\partial^2}{\partial u^2} ( y\' Q y + 2 y\'Su + u\' R u ) = 2R.
	\end{equation*}
	Since $\tfrac{\partial^2 }{\partial u^2}\phi(x,u) \succeq 0$ in a neighbourhood of the origin, differentiating \eqref{powerbalance} twice with respect to $u$ leads to
	\begin{equation*}
		2R \succeq \frac{\partial}{\partial u}f^\top(x,0) \left.\frac{\partial^2 V}{\partial \alpha^2} \right|_{\alpha=f(x,0)} \frac{\partial}{\partial u}f(x,0) \succeq 0 \,,
	\end{equation*}
	hence $R$ must be PSD. 
\end{proof}

This necessary condition in terms of the matrix $R$ generalizes the discussion on linear systems above to the nonlinear case. By inspecting the {\modif power balance \eqref{powerbalance} and the associated Fig. \ref{Fig:energybalance}, one can provide an energy interpretation:} for systems without feedthrough, $R \succeq 0$ is needed to supply nonnegative energy to the system to compensate the internal {\modif energy} accumulation due to the control input in the term $V(f(x,u)) - V(x)$.

\subsection{Virtual Output Interpretation}

To exploit passivity properties with systems that do not naturally have a feedthrough, one solution is to introduce a virtual output (see, \textit{e.g.}, \cite{LinByrnesPassivity} or \cite{AhmedPassivity}), that is, an artificial output transformation of the form 
\begin{equation} \label{virtualoutput}
	z=y+\R u \,,
\end{equation}
with $\R\in\mathbb{R}^{m \times m}$ (see Fig. \ref{Fig feedthrough virtual}). The circumflex symbol $\wedge$ is used to stress quantities that are related to virtual outputs. In this case, passivity is studied with respect to the virtual supply rate $z^\top u$ instead of the usual $y^\top u$.

We argue that another way to deal with systems without feedthrough is to consider {\modif richer supply rate functions than $y^\top u$}. Indeed, as mentioned for instance in \cite[Ch. 3.12.1]{BrogliatoDissipativity} and \cite[Rem. 7.5]{NavarroLopezThesis}, if a more general supply rate is considered, then the system may not have a feedthrough. In the following, we provide an energy interpretation of the virtual outputs, and show that {\modif quadratic} supply rate functions can incorporate and extend the effect of a virtual output.

\begin{obs}\label{Prop virtual output}

The system with virtual output \eqref{dynamics}-\eqref{virtualoutput} is passive if and only if the system with true output \eqref{dynamics}-\eqref{output} is $(0,\tfrac{1}{2}I,\R)$-dissipative. Moreover, the system with virtual output \eqref{dynamics}-\eqref{virtualoutput} is output strictly passive with a PD $\rho(y)=y^\top \Q y$ if and only if the system with true output \eqref{dynamics}-\eqref{output} is $(-\Q,\tfrac{1}{2}I,\R)$-dissipative.
\end{obs}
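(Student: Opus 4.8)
The plan is to establish both equivalences by direct substitution of the virtual output $z = y + \hat{R}u$ into the relevant dissipation inequality and then matching the resulting expression against the $(Q,S,R)$ supply rate of Definition \ref{def:dissipativity}. The key structural observation is that passing from the true output \eqref{output} to the virtual output \eqref{virtualoutput} changes only the readout map, not the dynamics \eqref{dynamics} nor the state $x$; consequently the admissible set of PSD storage functions $V$ is identical in the two formulations. The ``if and only if'' therefore reduces to showing that, for every fixed $V$, the virtual-output inequality holds for all $(x,u)$ precisely when the corresponding $(Q,S,R)$ inequality does, which is a pointwise algebraic identity between the two supply rates.

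First I would expand the virtual supply rate. Using \eqref{virtualoutput}, $z^\top u = (y + \hat{R}u)^\top u = y^\top u + u^\top \hat{R} u$, where I use that the scalar $u^\top \hat{R}^\top u$ equals $u^\top \hat{R} u$. Comparing with $s(y,u) = y^\top Q y + 2 y^\top S u + u^\top R u$ from Definition \ref{def:dissipativity}, the expression $y^\top u + u^\top \hat{R} u$ is exactly $s(y,u)$ with $Q = 0$, $S = \tfrac{1}{2}I$, and $R = \hat{R}$. Hence the passivity inequality $V(x^+) - V(x) \le z^\top u$ holds for all $(x,u)$ if and only if $V(x^+) - V(x) \le s(y,u)$ holds with this triple; since the same $V$ witnesses both sides, the two statements are equivalent, proving the first claim.

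For the second claim I would repeat the computation, now starting from the output-strict-passivity inequality of Definition \ref{def:dissipativity} applied to the virtual output, namely $V(x^+) - V(x) \le z^\top u - \rho(y)$ with $\rho(y) = y^\top \hat{Q} y$. Substituting $z^\top u = y^\top u + u^\top \hat{R} u$ yields $V(x^+) - V(x) \le -y^\top \hat{Q} y + y^\top u + u^\top \hat{R} u$, whose right-hand side is precisely $s(y,u)$ with $Q = -\hat{Q}$, $S = \tfrac{1}{2}I$, and $R = \hat{R}$, i.e. $(-\hat{Q},\tfrac{1}{2}I,\hat{R})$-dissipativity. The PD penalty $\rho$ is thus absorbed into the negative-definite matrix $Q = -\hat{Q}$, and the equivalence again follows because a single $V$ witnesses both inequalities.

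The computation itself is routine; the only point deserving care is that the $(Q,S,R)$ formulation requires $R = R^\top$, whereas $\hat{R}$ in \eqref{virtualoutput} is a priori an arbitrary matrix in $\mathbb{R}^{m\times m}$. Since the quadratic form $u^\top \hat{R} u$ depends only on the symmetric part $\tfrac{1}{2}(\hat{R} + \hat{R}^\top)$, the identification $R = \hat{R}$ should be read modulo symmetrization, and no generality is lost by taking $\hat{R}$ symmetric from the outset. This is the main (and essentially the only) subtlety; everything else is bookkeeping.
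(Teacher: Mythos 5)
Your proof is correct and takes essentially the same approach as the paper: both expand the virtual supply rate pointwise, writing $z^\top u = y^\top u + u^\top \hat{R} u$ (respectively $z^\top u - y^\top \hat{Q} y$) and identifying it with the $(0,\tfrac{1}{2}I,\hat{R})$- (respectively $(-\hat{Q},\tfrac{1}{2}I,\hat{R})$-) dissipative supply rate, so that any storage function witnessing one inequality witnesses the other. Your closing remark on symmetrizing $\hat{R}$ is a harmless refinement that the paper leaves implicit.
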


{\modif By decomposing} the virtual supply rate for passivity $z^\top u$ {\modif one can reveal} the relation with $(Q,S,R)$-dissipativity. Indeed,
		\begin{equation*}
			z \' u = (y + \R u)^\top u = \begin{bmatrix}
				y \\ u
			\end{bmatrix}\' \begin{bmatrix} 0 & \tfrac{1}{2}I \\ \star & \R
			\end{bmatrix}\begin{bmatrix}
				y \\ u
			\end{bmatrix} \,,
		\end{equation*}
		recovering the definition of supply rate corresponding to $(0,\tfrac{1}{2}I,\R)$-dissipativity for the system \eqref{dynamics}-\eqref{output}. Similarly, 
		\begin{equation*}
			z \' u - y^\top \Q y = \begin{bmatrix}
				y \\ u
			\end{bmatrix}\' \begin{bmatrix} -\Q & \tfrac{1}{2}I \\ \star & \R
			\end{bmatrix}\begin{bmatrix}
				y \\ u
			\end{bmatrix} \,,
		\end{equation*}
	which is the supply rate corresponding to $(-\Q,\tfrac{1}{2}I,\R)$-dissipativity for the system \eqref{dynamics}-\eqref{output}.

Observation \ref{Prop virtual output} demonstrates that for nonlinear systems \eqref{dynamics}, passivity/output strict passivity with respect to a virtual output is simply a special case of $(Q,S,R)$-dissipativity with respect to the actual output. We note that other connections between passivity with virtual outputs and $(Q,S,R)$-dissipativity may be established by using the same idea; for instance, one can consider \textit{input strict passivity}, \textit{output feedback passivity} or any other relevant definition available in the literature (see, \textit{e.g.}, \cite{KhalilNonlinear}). Here we focus on output strict passivity because it plays an important role when considering interconnected dissipative systems, as we will discuss in Section \ref{Sec:interconnection}. {\modif Finally, note that one may consider richer function classes for the supply rate to counteract the absence of feedthrough.  In the present paper, however, we focus on $(Q,S,R)$-dissipativity as a first step into this direction, that allows us to derive matrix inequality conditions that are linear in $Q$, $S$, and $R$, as discussed in the upcoming sections.}

\section{Dissipative Control for Linear DT Systems} \label{sec:feedbackdissipativity}

Consider the DT linear system described by $x^+ = Ax + Bv + Gu$ and $y = Cx$, where $B\in\mathbb{R}^{n\times r}$, $v\in\mathbb{R}^r$ represents a control input. The (quadratic) dissipative control problem consists in characterizing the state-feedback controllers of the form $v=Kx$, $K\in\mathbb{R}^{r \times n}$, such that the closed-loop system
\begin{subequations}\label{closed-loop}
	\begin{align}
		x^+ & = (A+BK)x+Gu \\
		y & = Cx \,,
	\end{align}
\end{subequations}
is $(Q,S,R)$-dissipative with respect to the pair $(u,y)$. The presence of the matrix $K$ as additional decision variable makes it less straightforward to reformulate the problem as an LMI (as in \eqref{LMIdissipativitylinear}). Moreover, we aim at obtaining an LMI formulation that is also linear in the matrices $Q$, $S$, and $R$. The reason is that, in Section \ref{Sec:interconnection}, we will introduce decentralized LMI conditions in $Q$, $S$, and $R$ that guarantee network stability, thus we want to treat them as additional decision variables.

\subsection{Primal Approach}

An LMI formulation for the dissipative control problem was proposed in 1999 by Tan and co-authors \cite[Thm. 2]{TanDissipativeControl}. The authors assume $Q$, $S$, and $R$ to be given, and the LMI is solved with respect to the storage function matrix $P$ and an auxiliary matrix $Z\in\mathbb{R}^{r \times n}$, such that the dissipative feedback gain can be retrieved as $K=ZP^{-1}$. Although linear in $P$ and $Z$, their reformulation is not linear in $Q$, $S$, and $R$, making it difficult to incorporate decentralized stability conditions. 
We propose here an alternative linear reformulation that allows one to also treat $Q$ and $R$ as decision variables.

\begin{lem} \label{lem:primalLMI}
	The closed-loop system \eqref{closed-loop} is $(Q,S,R)$-dissipative with $Q\prec 0$ if and only if there exist $P \succ 0$ and $Z$ such that 
	\begin{equation}\label{LMIprimal}
		\begin{bmatrix}
			P & AP + BZ & G & 0 \\
			\star & P & PC^\top S & PC^\top \\
			\star & \star & R & 0 \\
			\star & \star & \star & -\tilde{Q}
		\end{bmatrix} \succeq 0 
	\end{equation}
\end{lem}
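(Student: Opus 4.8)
The plan is to connect the definition of dissipativity to \eqref{LMIprimal} through a chain of exact equivalences, using two Schur complements with a congruence transformation in between; the only hypothesis that does real work will be $Q \prec 0$. First I would invoke the characterization behind \eqref{LMIdissipativitylinear}, applied to the closed loop \eqref{closed-loop} (i.e. with $A$ replaced by $A+BK$): the system is $(Q,S,R)$-dissipative with a positive definite quadratic storage $V(x)=x^\top \bar{P} x$, $\bar{P}\succ 0$, if and only if
\begin{equation*}
\begin{bmatrix} (A+BK)^\top \bar{P} (A+BK) - \bar{P} - C^\top QC & (A+BK)^\top \bar{P} G - C^\top S \\ \star & G^\top \bar{P} G - R\end{bmatrix}\preceq 0 .
\end{equation*}
Collecting the two terms that are quadratic in $\bar{P}$, writing the left-hand side as $\left(\text{affine part}\right)+\left[\begin{smallmatrix}(A+BK)^\top\\ G^\top\end{smallmatrix}\right]\bar{P}\left[\begin{smallmatrix}A+BK & G\end{smallmatrix}\right]$, and applying a Schur complement against the positive definite block $\bar{P}$ turns this into an equivalent three-block inequality in which $\bar{P}^{-1}$ sits on the diagonal while $A+BK$ and $G$ enter only affinely.

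Next I would linearize. Setting $P:=\bar{P}^{-1}\succ 0$ and $Z:=KP$ and conjugating the three-block inequality by $\operatorname{diag}(I,P,I)$, the diagonal $\bar{P}^{-1}$ block becomes $P$, the off-diagonal $A+BK$ block becomes $(A+BK)P=AP+BZ$, and the $C^\top S$ block becomes $PC^\top S$. Since $P$ is invertible the correspondence $K\leftrightarrow Z=KP$ is a bijection, so nothing is lost and the gain is recovered a posteriori as $K=ZP^{-1}$. This already reproduces the first and third block rows and columns of \eqref{LMIprimal} and eliminates the bilinear dependence on the pair $(K,\bar{P})$.

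The hard part is the surviving term $C^\top QC$: under the congruence it becomes $PC^\top Q CP$, which is bilinear in $P$ and the supply-rate matrix $Q$ and is therefore still not an LMI. This is exactly where $Q\prec 0$ enters. Because $-Q\succ 0$ is invertible, I would write $PC^\top Q CP=-(PC^\top)(-Q)(PC^\top)^\top$ and undo a Schur complement: introduce a fourth block whose off-diagonal entry is $PC^\top$ and whose pivot is $-\tilde{Q}:=(-Q)^{-1}$, i.e. $\tilde{Q}=Q^{-1}$. The pivot $-\tilde{Q}$ is positive definite precisely because $-Q\succ 0$, so this step is again an exact equivalence, and the resulting matrix is \eqref{LMIprimal}. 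Crucially, the supply-rate data now appear through $\tilde{Q}=Q^{-1}$ and through $R$ linearly, which is the linearity in the supply-rate matrices we are after. Reading the four equivalences in reverse — definition $\Leftrightarrow$ dissipation inequality $\Leftrightarrow$ first Schur form $\Leftrightarrow$ congruent form $\Leftrightarrow$ \eqref{LMIprimal} — yields both implications of the statement.
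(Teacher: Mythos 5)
Your proposal is correct and follows essentially the same route as the paper's own proof: the closed-loop version of \eqref{LMIdissipativitylinear} with quadratic storage, a Schur complement against the storage matrix, the congruence by $\text{diag}(I,P,I)$ with $P=\mathscr{P}^{-1}$ and the substitution $Z=KP$, and then a second (reverse) Schur complement absorbing the bilinear term $PC^\top QCP$ with pivot $-\tilde{Q}=(-Q)^{-1}\succ 0$, which is exactly where $Q\prec 0$ enters in the paper as well. All steps are exact equivalences, so both implications follow just as in the published argument.
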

\vspace{0.3cm}
holds with $\tilde{Q}\prec 0$, where $\tilde{Q} = Q^{-1}$. In that case, the dissipative feedback gain is $K=ZP^{-1}$ and an associated storage function is $V(x) = x^\top P^{-1}x$.  

\begin{proof}
 By considering a quadratic storage function $V(x) = x^\top \mathscr{P} x$ with $\mathscr{P}=\mathscr{P}^\top \succ 0$, one can infer from inequality \eqref{LMIdissipativitylinear} that the closed-loop system \eqref{closed-loop} is $(Q,S,R)$-dissipative if and only if
	\begin{equation*}
		\begin{bmatrix} A_K\' \mathscr{P} A_K - \mathscr{P} - C\'QC & A_K\' \mathscr{P} G - C\' S \\ \star & G\' \mathscr{P} G - R 
		\end{bmatrix} \preceq 0 \,,
	\end{equation*}
	with $A_K \coloneqq A+BK$. This inequality can be decomposed as 
	\begin{equation*}
		\begin{bmatrix} C\'QC + \mathscr{P} & C\' S \\ \star & R \end{bmatrix} 
		- \begin{bmatrix} A_K^\top \\ G^\top \end{bmatrix} \mathscr{P} \begin{bmatrix} A_K & G \end{bmatrix} \succeq 0 \,.
	\end{equation*}
	By taking the Schur complement \cite[Theorem 7.7.7]{HornJohnsonMatrix}, applying a congruence operation by pre- and post-multiplying by $\text{diag}(I, \mathscr{P}^{-1}, I)$, and letting $P\coloneq \mathscr{P}^{-1}$, one obtains
	\begin{equation*}
		\begin{bmatrix} P & A_K P & G \\
			\star & P & P C^\top S \\
			\star & \star & R \end{bmatrix} 
		- \begin{bmatrix} 0 \\ PC^\top \\ 0 \end{bmatrix} (-Q) \begin{bmatrix} 0 & CP & 0 \end{bmatrix} \succeq 0 \,.
	\end{equation*}
	Note that $A_K P = AP + BKP$. We define the auxiliary variable $Z \coloneqq KP$, so that the control gain can be uniquely determined by $K=ZP^{-1}$. Finally, by once again applying the Schur complement, one obtains \eqref{LMIprimal}.
\end{proof}

One may regard \eqref{LMIprimal} as a generalization of Lemma 1 in \cite{AhmedPassivity}, where they consider strict passivity of the closed-loop system with virtual output. We stress that \eqref{LMIprimal} is not linear in the $S$ matrix. By regarding $S$ as a decision variable the inequality \eqref{LMIprimal} becomes bilinear in $P$ and $S$. As bilinear problems are generally difficult to solve, we recommend to treat $S$ as a parameter and not a decision variable. In the next section, we discuss an alternative approach to the dissipative control problem to alleviate this shortcoming.

\subsection{Dual Approach} \label{sec:dualcontrol}

The content of this section is inspired by the dual approach to dissipativity discussed in \cite{HenkDissipativity} in the context of data-driven dissipativity analysis in open-loop. The terminology ``dual" refers to the fact that the Dualization Lemma (Fact \ref{Fact:Duality} in Appendix \ref{appendix:matrix}) is used to obtain a dissipativity condition for the dual of the system \eqref{closed-loop}, $x^+=-A_K^\top x - C^\top u$, $y=-G^\top x$.

\begin{lem} \label{lem:dissipativecontroldual}
 The closed-loop system \eqref{closed-loop} is $(Q,S,R)$-dissipative with $Q\prec 0$ and $R\succ 0$ if and only if there exist $P\succ 0$ and $Z$ such that
	\begin{equation} \label{LMIdual}
		\begin{bmatrix} P & (AP+BZ)^\top & PC^\top \\
			\star & P-G\mathscr{R}G^\top & G\mathscr{S} \\
			\star & \star & - \mathscr{Q} \end{bmatrix}  \succeq 0 
	\end{equation}
 holds with $\mathscr{Q}\prec 0$ and $\mathscr{R} \succ 0$, where $\begin{bsmallmatrix} \mathscr{Q} & \mathscr{S} \\ \star & \mathscr{R}
 \end{bsmallmatrix} = \begin{bsmallmatrix} Q & S \\ \star & R
\end{bsmallmatrix}^{-1}$. In that case, the dissipative feedback gain is $K=ZP^{-1}$ and an associated storage function is $V(x) = x^\top P^{-1}x$.  
\end{lem}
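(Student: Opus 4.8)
The plan is to derive \eqref{LMIdual} from the closed-loop dissipativity characterization \eqref{LMIdissipativitylinear} by dualizing, exactly as the reference to Fact~\ref{Fact:Duality} suggests. First I would fix a quadratic storage $V(x)=x\'\mathscr{P}x$, $\mathscr{P}\succ0$, and repackage the dissipation inequality $V(x^+)-V(x)\le s(y,u)$ as a single quadratic form that is nonpositive on the system behaviour. Collecting the signals into $\zeta=(x^+,x,y,u)$ and substituting $x^+=A_Kx+Gu$, $y=Cx$ with $A_K=A+BK$, the inequality reads $\mathcal{I}\'\,\Xi\,\mathcal{I}\preceq0$, where $\mathcal{I}$ is the image representation sending $(x,u)$ to $\zeta$ and $\Xi$ is block diagonal, equal to $\mathrm{diag}(\mathscr{P},-\mathscr{P})$ on the $(x^+,x)$ coordinates and to $-\Pi$ on the $(y,u)$ coordinates, with $\Pi=\begin{bsmallmatrix}Q&S\\\star&R\end{bsmallmatrix}$. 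This is merely a reshuffling of \eqref{LMIdissipativitylinear}, so it is routine.

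The core step is to apply the Dualization Lemma (Fact~\ref{Fact:Duality}) to this factorized inequality. Since $\mathcal{I}$ has full column rank, $\Xi\preceq0$ on $\mathrm{im}\,\mathcal{I}$ is equivalent, provided the inertia of $\Xi$ matches the subspace dimensions, to $\Xi^{-1}\succeq0$ on the orthogonal complement $(\mathrm{im}\,\mathcal{I})^\perp=\ker\mathcal{I}\'$. I would exhibit a basis $\mathcal{N}$ of this complement by solving $\mathcal{I}\'\eta=0$; the solution is precisely the image representation of the dual system $x^+=-A_K\'x-C\'u$, $y=-G\'x$, its two free variables playing the role of the dual state and dual input. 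Because $\Xi^{-1}=\mathrm{diag}(P,-P,-\Pi^{-1})$ with $P=\mathscr{P}^{-1}$ and $\Pi^{-1}=\begin{bsmallmatrix}\mathscr{Q}&\mathscr{S}\\\star&\mathscr{R}\end{bsmallmatrix}$, evaluating $\mathcal{N}\'\Xi^{-1}\mathcal{N}\succeq0$ yields a $2\times2$ block inequality with entries $P-A_KPA_K\'-G\mathscr{R}G\'$, $G\mathscr{S}-A_KPC\'$, and $-\mathscr{Q}-CPC\'$.

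Before invoking the lemma I must discharge its inertia hypothesis, which is where the assumptions $Q\prec0$ and $R\succ0$ are spent: by a Schur-complement congruence $\Pi$ is congruent to $\mathrm{diag}(Q-SR^{-1}S\',R)$, so $\mathrm{In}(\Pi)=(p,0,m)$ and hence $\mathrm{In}(\Xi)=(n+m,0,n+p)$. These counts coincide with $\dim\mathrm{im}\,\mathcal{I}=n+m$ and $\dim(\mathrm{im}\,\mathcal{I})^\perp=n+p$, which is exactly the compatibility required by Fact~\ref{Fact:Duality}; the same block-inverse computation shows $\mathscr{Q}\prec0$ and $\mathscr{R}\succ0$, as claimed. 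Finally I would reintroduce the $(1,1)$ block: since $P\succ0$, the $2\times2$ inequality is precisely the Schur complement of the $P$-block of the $3\times3$ matrix in \eqref{LMIdual}, so the two are equivalent. Substituting $P=\mathscr{P}^{-1}$ and the linearizing change of variable $Z\coloneqq KP$ (so that $A_KP=AP+BZ$ and $K=ZP^{-1}$) delivers \eqref{LMIdual}; note that the supply-rate data now enter only through $\mathscr{Q},\mathscr{S},\mathscr{R}$ and appear affinely, which is the entire point of the dual formulation.

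The main obstacle I anticipate is the bookkeeping around the Dualization Lemma: correctly identifying the complementary subspace and recognizing it as the dual realization, invoking the nonstrict version of the lemma (the inequality is $\preceq0$, not strict, so I must rely on the matching-inertia statement of Fact~\ref{Fact:Duality} for semidefinite forms), and tracking the signs and transposes of the off-diagonal block so that the result coincides with \eqref{LMIdual}. The evaluation of $\mathcal{N}\'\Xi^{-1}\mathcal{N}$ and the closing Schur complement are mechanical once the subspace and the inertia count are pinned down.
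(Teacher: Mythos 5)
Your proposal is correct and takes essentially the same route as the paper's own proof: both repackage \eqref{LMIdissipativitylinear} as a quadratic form over the image representation of the closed-loop behaviour, pin down the inertia $(n+m,0,n+p)$ from $Q\prec 0$, $R\succ 0$ (your Schur-complement congruence is precisely the Haynsworth argument the paper cites), apply the non-strict Dualization Lemma (Fact~\ref{Fact:Duality}) to pass to the dual realization $x^+=-A_K^\top x - C^\top u$, $y=-G^\top x$, and close via the substitution $Z=KP$ and a final Schur complement, with $Q\prec 0,\,R\succ 0 \iff \mathscr{Q}\prec 0,\,\mathscr{R}\succ 0$ handling both directions. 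The only differences are cosmetic (coordinate ordering and deriving the dual image as $\ker \mathcal{I}^\top$ rather than quoting the $\begin{bsmallmatrix} -M^\top \\ I \end{bsmallmatrix}$ form), so there is nothing to flag.
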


\begin{proof}
		By re-arranging the terms in \eqref{LMIdissipativitylinear}, one can show the closed-loop system \eqref{closed-loop} is $(Q,S,R)$-dissipative if and only if there exists $\mathscr{P} = \mathscr{P}^\top \succ 0$ such that
		\begin{equation} \label{dualizationLMI}
			\begin{bmatrix}
				I & 0 \\ 0 & I \\ A_K & G \\ C & 0
			\end{bmatrix}^\top \underbrace{\begin{bmatrix} -\mathscr{P} & 0 & 0 & 0 \\
					\star & -R & 0 & -S \\
					\star & \star & \mathscr{P} & 0 \\
					\star & \star & \star & -Q
			\end{bmatrix}}_{\coloneq\Psi} \begin{bmatrix}
				I & 0 \\ 0 & I \\ A_K & G \\ C & 0
			\end{bmatrix} \preceq 0 \,.
		\end{equation}
		If $Q\prec 0$ and $R\succ 0$, then $\text{In}(\begin{bsmallmatrix} Q & S \\ \star & R
			\end{bsmallmatrix}) = (p,0,m)$ by Haynsworth's Theorem \cite[4.5.P21]{HornJohnsonMatrix}, hence $\text{In}(\Psi) = (n+m,0,n+p)$.  
		\iffalse
		\begin{equation*}
			\Psi^{-1} = \begin{bmatrix} -\mathscr{P}^{-1} & 0 & 0 & 0 \\
				\star & -\mathscr{R} & 0 & -\mathscr{S} \\
				\star & \star & \mathscr{P}^{-1} & 0 \\
				\star & \star & \star & -\mathscr{Q}
			\end{bmatrix} \,.
		\end{equation*}
		\fi
		By the the Dualization Lemma (Fact \ref{Fact:Duality}), we conclude that \eqref{dualizationLMI} is verified with $Q\preceq 0$ if and only if 
		\begin{equation} \label{dualizationLMI2}
			\begin{bmatrix}
				-A_K^\top & -C^\top \\ -G^\top & 0 \\ I & 0 \\ 0 & I
			\end{bmatrix}^\top \Psi^{-1} \begin{bmatrix}
				-A_K^\top & -C^\top \\ -G^\top & 0 \\ I & 0 \\ 0 & I
			\end{bmatrix} \succeq 0 
		\end{equation}
		is verified with $\mathscr{R} \succeq 0$. We denote $P \coloneqq \mathscr{P}^{-1}$ and we decompose \eqref{dualizationLMI2} as
		\begin{equation*}
			\begin{bmatrix}
				P - G\mathscr{R}G^\top & G\mathscr{S} \\
				\star & - \mathscr{Q}
			\end{bmatrix} - \begin{bmatrix} A_K \\ C
			\end{bmatrix} P \begin{bmatrix} A_K^\top & C^\top \end{bmatrix} \succeq 0 \,.
		\end{equation*}
		Finally, the LMI \eqref{LMIdual} is obtained by substituting $Z=KP$ to the previous inequality, applying the Schur complement, and recognizing that $Q\prec 0, \; R\succ 0 \iff \mathscr{Q}\prec 0, \; \mathscr{R}\succ 0$ by matrix inversion properties \cite{BlockMatrixInversion}. Similarly, one can assume that \eqref{LMIdual} holds and, by noticing that $\text{In}(\Psi)=\text{In}(\Psi^{-1})$, it implies $(Q,S,R)$-dissipativity with $Q\prec0$ and $R\succ0$.
\end{proof}

\begin{rem}
	As mentioned in \cite{TanDissipativeControl}, a necessary condition for the dissipative control problem \eqref{LMIprimal} is that $(A,B)$ is a stabilizable pair. By duality theory \cite{AndersonStabilizability}, on the other hand, it follows that detectability of $(C,A)$ is necessary for \eqref{LMIdual}. 
\end{rem}

The main advantage of LMI \eqref{LMIdual} is that, besides linear in $P$ and $Z$, it is also linear in the dual supply rate matrices $\mathscr{Q}$, $\mathscr{S}$, and $\mathscr{R}$. Note that the primal variables can be uniquely determined by $\begin{bsmallmatrix} Q & S \\ \star & R
\end{bsmallmatrix} = \begin{bsmallmatrix} \mathscr{Q} & \mathscr{S} \\ \star & \mathscr{R}
\end{bsmallmatrix}^{-1}$. From a computational perspective, \eqref{LMIdual} is more attractive than \eqref{LMIprimal} since it involves an LMI of dimension $2n+m$ instead of $2n+m+p$.

We want to stress that $Q\prec0$ is a sufficient condition for the stability of the zero-input dynamics of $(Q,S,R)$-dissipative systems. Indeed, from \eqref{dissipation rate}, $V(f(x,0)) - V(x) \le y^\top Q y \le 0$, hence $x^+=f(x,0)$ is stable. In practice, since $Q\prec0$ will appear in Section \ref{Sec:interconnection} as a sufficient condition to guarantee network stability, no restriction is introduced by Lemmas \ref{lem:primalLMI} and \ref{lem:dissipativecontroldual} in this sense. Moreover, since $R\succeq0$ is a necessary condition for $(Q,S,R)$-dissipativity (see Lemma~\ref{corollary:R}), the additional requirement $R\succ 0$ in Lemma \ref{lem:dissipativecontroldual} reduces to $R$ being non-singular. In Section \ref{sec:discussion} we will derive decentralized LMI conditions in the dual variables that can be paired with \eqref{LMIdual} to guarantee network stability.

%%%%%%%%%%%%%%%%%%%%%%%%%%%%%%%%%%%%%%%%%%%%%%%%%%%%%%

\section{Interconnection of Dissipative DT Systems} \label{Sec:interconnection}

A simple digraph is a pair $\mathcal{G} = (\mathcal{V},\mathcal{E})$, where $\mathcal{V} = \{ 1,\ldots,N \}$ denotes the node set and $\mathcal{E} \subseteq \mathcal{V} \times \mathcal{V}$ the edge set. Let each node host a nonlinear DT subsystem with the same structure as \eqref{systemaffine},
\begin{subequations}\label{subsystemsaffine}
	\begin{align}
		x^+_i & = f_i(x_i,u_i) \\
		y_i & = h_i(x_i) \,,
	\end{align}
\end{subequations}
with $f_i(0,0)=0$ and $h_i(0)=0$ for all $i \in \mathcal{V}$. By denoting $u = \begin{bmatrix} u_1\' & \cdots & u_N\' \end{bmatrix}\'$ and $y = \begin{bmatrix} y_1\' & \cdots & y_N\' \end{bmatrix}\'$, we define the interconnection structure
\begin{equation} \label{linear interconnection}
	{\modif u=Hy\,.}
\end{equation}
The interconnection matrix $H\in\mathbb{R}^{mN \times pN}$ can represent a variety of linear interconnection structures. For instance, if $m=p$ and the subsystems \eqref{subsystemsaffine} are coupled together via 
\begin{equation}\label{coupling}
	u_i = \sum_{j\in\mathcal{N}_i} a_{ji}(y_j-y_i), \quad \forall i\in \mathcal{V}\,,
\end{equation}
then one can show that $H={\modif -} \mathcal{L} \otimes I \coloneq {\modif -} L$ (see, \textit{e.g.}, \cite{PulkitAutomatica}), where $\mathcal{L}=\mathcal{L}^\top$ is the weighted Laplacian matrix defined by the symmetric weights $a_{ji}=a_{ij}>0$, $\otimes$ denotes the Kronecker product, and $\mathcal{N}_i = \{ j \in \mathcal{V} \; : \; (i,j) \in \mathcal{E} \}$ is the set of neighbours of node $i$. The weighted degree associated to node $i$ is $d_i = \sum_{j=1}^{N} a_{ij}$, the lowest degree is $d_{\text{min}} = \min \{ d_1,\ldots,d_N \}$ and the degree matrix is $\mathcal{D}=\mbox{diag}(d_1,\ldots,d_N)$.

In the following, $H$ will be used to denote the general linear interconnection \eqref{linear interconnection}, while ${\modif -} L$ to denote the special case of Laplacian coupling \eqref{coupling}.
Finally, the network dynamics is 
\begin{equation}\label{global dynamics}
	x^+ = f(x,Hy) \coloneq \tilde{f}(x) \,,
\end{equation}
where $ x = \begin{bmatrix} x_1 \\ \vdots \\ x_N \end{bmatrix}$ and $f(x,u) = \begin{bmatrix} f_1(x_1,u_1) \\ \vdots \\ f_N(x_N,u_N) \end{bmatrix}$.

\subsection{Linear Systems and Virtual Outputs}

The authors in \cite{AhmedPassivity} infer asymptotic stability of the network \eqref{global dynamics} with Laplacian coupling starting from {\modif individual} strict passivity of the subsystems \eqref{subsystemsaffine} with respect to a virtual output. For clarity of presentation, we report the result here in our notation. Let $C = \mbox{diag}(C_1,\ldots,C_N)$, $\Q = \mbox{diag}(\Q_1,\ldots,\Q_N)$ and $\R = \mbox{diag}(\R_1,\ldots,\R_N)$, {\modif and recall that $\wedge$ refers to quantities associated with virtual outputs.}

\begin{lem} \cite[Lemma 2]{AhmedPassivity} \label{lemmaAhmed}
	Assume subsystems \eqref{subsystemsaffine} are coupled together via \eqref{coupling} and $f_i(x_i,u_i)=A_ix_i+B_iu_i$, $h_i(x_i)=C_ix_i$. Moreover, assume they are strictly passive with PD and $C^1$ storage functions $V_i:\mathbb{R}^n \to \mathbb{R}_{>0}$ and with respect to the virtual output $z_i = y_i + \R_i u_i$, that is,
	 \begin{equation*}
		V_i(x_i^+) - V_i(x_i) \le z_i^\top u_i - x_i^\top \Q_i x_i \quad \forall x_i,u_i, \; \forall i \in \mathcal{V} \,.
	\end{equation*}
	Then, if 
	\begin{equation}\label{globalconditionAhmed}
		C^{\top} L C - C^{\top}L^\top \hat{R} LC + \hat{Q} \succ 0 \,,
	\end{equation}
 the network \eqref{global dynamics} is asymptotically stable.
\end{lem}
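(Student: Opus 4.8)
The plan is to use the aggregate storage function $V(x) = \sum_{i=1}^N V_i(x_i)$ as a Lyapunov function for the network \eqref{global dynamics}. Since each $V_i$ is PD with $V_i(0)=0$, the sum $V$ is PD as well and qualifies as a candidate Lyapunov function; it then suffices to show that its one-step difference is negative definite along the closed-loop trajectories, after which the discrete-time Lyapunov theorem delivers the conclusion.

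First I would sum the individual strict-passivity inequalities over all nodes. Expanding the virtual supply using $\R_i = \R_i\'$ gives $z_i\' u_i = y_i\' u_i + u_i\' \R_i u_i$, so that, stacking the signals into $x$, $y=Cx$, and $u$, one obtains
\begin{equation*}
	V(x^+) - V(x) \le y\' u + u\' \R u - x\' \Q x \,,
\end{equation*}
with $\R = \mathrm{diag}(\R_1,\dots,\R_N)$ and $\Q = \mathrm{diag}(\Q_1,\dots,\Q_N)$. This is merely the telescoped version of the per-subsystem balance and requires nothing beyond bookkeeping.

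Next I would impose the interconnection. Under the Laplacian coupling \eqref{coupling} we have $u = -Ly$, hence $y\' u = -y\' L y$ and $u\' \R u = y\' L\' \R L y$. Substituting $y = Cx$ collapses the right-hand side into a single quadratic form,
\begin{equation*}
	V(x^+) - V(x) \le - x\' \bigl( C\' L C - C\' L\' \R L C + \Q \bigr) x \,.
\end{equation*}
Invoking the hypothesis \eqref{globalconditionAhmed} that the bracketed matrix is PD, the difference is strictly negative for every $x \neq 0$, yielding asymptotic stability of the origin of \eqref{global dynamics} (global if $V$ is in addition radially unbounded).

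I expect no genuinely hard step here: the argument is a direct summation-plus-substitution. The only points demanding care are the sign convention of the interconnection, namely verifying $H=-L$ so that the coupling enters as $-y\' L y$ rather than $+y\' L y$, and checking that the strict $-x_i\' \Q_i x_i$ penalty in the subsystem dissipation survives the aggregation so that $\Delta V$ is strictly, not merely non-strictly, negative — this is precisely what permits a direct Lyapunov conclusion without a LaSalle-type invariance argument.
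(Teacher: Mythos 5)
Your proof is correct, and it is essentially the argument this paper uses for all results of this type: the paper itself does not prove Lemma~\ref{lemmaAhmed} (it is quoted from \cite{AhmedPassivity} without proof), but its proofs of the neighbouring Lemma~\ref{Lem passivity linear} and Theorem~\ref{lem:stabilitynonlinar} proceed exactly as you do --- sum the individual storage functions into $V(x)=\sum_i V_i(x_i)$, substitute the coupling $u=-Ly$ and $y=Cx$, and impose positive definiteness of the resulting quadratic form. Your closing observation is also the right one, and it is the real content distinguishing this lemma from Lemma~\ref{Lem passivity linear}: because the strict-passivity penalty here is $-x_i^\top \Q_i x_i$ in the full state rather than $-y_i^\top \Q_i y_i$ in the output, condition \eqref{globalconditionAhmed} renders $V(x^+)-V(x)$ negative definite in $x$ directly, so a plain discrete-time Lyapunov theorem suffices and no LaSalle-type invariance argument or detectability hypothesis is needed --- precisely the extra machinery (detectability of $(A_i,C_i)$ and the invariance-set analysis on $\{x: y=0\}$) that Lemma~\ref{Lem passivity linear} must add once the penalty acts only on the output. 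One cosmetic point: you do not actually need $\R_i=\R_i^\top$ to write $z_i^\top u_i = y_i^\top u_i + u_i^\top \R_i u_i$, since the scalar quadratic form $u_i^\top \R_i u_i$ only sees the symmetric part of $\R_i$.
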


We argue here that the global condition \eqref{globalconditionAhmed} can be simplified by considering a more appropriate definition of passivity -- output strict passivity instead of strict passivity -- together with the additional assumption that the subsystems are detectable. In this way, one can remove the dependence over the output transformation matrix $C$. 

\begin{lem} \label{Lem passivity linear}
	Assume subsystems \eqref{subsystemsaffine} are coupled via \eqref{coupling}, $f_i(x_i,u_i)=A_ix_i+B_iu_i$, $h_i(x_i)=C_ix_i$, and all the pairs $(A_i,C_i)$ are detectable. Moreover, assume they are output strictly passive with PD and $C^1$ storage functions $V_i:\mathbb{R}^n \to \mathbb{R}_{>0}$ and with respect to the virtual output $z_i = y_i + \R_i u_i$, that is, 
 \begin{equation*}
		V_i(x_i^+) - V_i(x_i) \le z_i^\top u_i - y_i^\top \Q_i y_i \quad \forall x_i,u_i, \; \forall i \in \mathcal{V} \,.
	\end{equation*}
	Then, the network \eqref{global dynamics} is asymptotically stable if
	\begin{equation}\label{inequalitylinear}
		L - L^\top \R L + \Q \succ 0 \,.
	\end{equation}
\end{lem}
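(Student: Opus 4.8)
The plan is to take the aggregate storage function $V(x)=\sum_{i=1}^{N}V_i(x_i)$ as a Lyapunov candidate for the closed-loop network; since each $V_i$ is PD, so is $V$. The first step is to re-express the virtual-output output strict passivity in terms of the true outputs. Expanding $z_i^\top u_i=(y_i+\R_i u_i)^\top u_i=y_i^\top u_i+u_i^\top \R_i u_i$, the hypothesis reads
\begin{equation*}
V_i(x_i^+)-V_i(x_i)\le y_i^\top u_i+u_i^\top \R_i u_i-y_i^\top \Q_i y_i\,,
\end{equation*}
which is precisely $(-\Q_i,\tfrac{1}{2}I,\R_i)$-dissipativity with respect to $y_i$, as established in Observation~\ref{Prop virtual output}.

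Next I would aggregate these inequalities across the network. Writing $\R=\mathrm{diag}(\R_1,\dots,\R_N)$ and $\Q=\mathrm{diag}(\Q_1,\dots,\Q_N)$ and summing over $i$ gives
\begin{equation*}
V(x^+)-V(x)\le y^\top u+u^\top \R u-y^\top \Q y\,.
\end{equation*}
Substituting the Laplacian coupling $u=-Ly$ and collecting the quadratic forms then yields
\begin{equation*}
V(x^+)-V(x)\le -\,y^\top\!\left(L-L^\top \R L+\Q\right)y\,,
\end{equation*}
so hypothesis \eqref{inequalitylinear} makes the right-hand side nonpositive, and equal to zero only when $y=Cx=0$. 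Hence $V$ is nonincreasing along the closed-loop trajectories, which already secures Lyapunov stability of the origin.

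The main obstacle is upgrading this nonincrease to \emph{asymptotic} stability, because the decrement vanishes on the entire set $\{x:Cx=0\}$ rather than only at the origin; this is exactly where detectability must enter. Since the subsystems are linear, the closed loop is $x^+=(A-BLC)x$ with $A=\mathrm{diag}(A_i)$, $B=\mathrm{diag}(B_i)$, $C=\mathrm{diag}(C_i)$. Taking the $V_i$ quadratic (always possible for linear systems with quadratic supply rates), so that $V(x)=x^\top P x$ with $P\succ0$, the bound above becomes the Lyapunov inequality
\begin{equation*}
(A-BLC)^\top P(A-BLC)-P\preceq -C^\top M C\,,\qquad M:=L-L^\top \R L+\Q\succ0\,.
\end{equation*}
I would then apply the discrete-time Lyapunov theorem with a positive semidefinite right-hand side: it yields that $A-BLC$ is Schur provided the pair $(A-BLC,\,C^\top M C)$ is detectable. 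Because $M\succ0$, this pair is detectable if and only if $(A-BLC,C)$ is, and since detectability is invariant under the output injection $A\mapsto A-BLC$, the latter holds precisely because each $(A_i,C_i)$—hence $(A,C)$—is detectable. Therefore $A-BLC$ is Schur and the network is asymptotically stable. The same conclusion follows, more intuitively, from LaSalle's invariance principle: trajectories converge to the largest invariant set contained in $\{Cx=0\}$, on which $u=-Ly=0$ and the dynamics collapse to the zero-input form $x^+=Ax$, which detectability confines to the origin.
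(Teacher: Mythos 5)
Your proposal is correct, and its core computation is exactly the paper's: sum the individual storage functions, expand $z_i^\top u_i$ to recognize $(-\Q_i,\tfrac{1}{2}I,\R_i)$-dissipativity (Observation~\ref{Prop virtual output}), substitute $u=-Ly$, and obtain $V(x^+)-V(x)\le -y^\top(L-L^\top\R L+\Q)y$. Where you genuinely diverge is in upgrading stability to asymptotic stability. The paper argues via the invariance principle: trajectories converge to the largest invariant set in $\{x: y=0\}$, on which the network is decoupled and detectability of each $(A_i,C_i)$ forces convergence to the origin --- this is your closing LaSalle remark, and it is the route that extends verbatim to the nonlinear Theorem~1 later in the paper. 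Your primary route is instead purely algebraic and specific to the linear case: with quadratic storage $V(x)=x^\top Px$ (which you correctly note is without loss of generality for linear systems with quadratic supply rates --- the paper simply \emph{assumes} quadratic storage at the same point, so your justification is if anything cleaner), you read the decrement bound as $(A-BLC)^\top P(A-BLC)-P\preceq -C^\top MC$ with $M\succ0$, and invoke the discrete-time Lyapunov theorem with PSD right-hand side plus detectability of the pair. The two reductions you use there are sound: $\ker(C^\top MC)=\ker C$ since $M\succ0$, so detectability of $(A-BLC,C^\top MC)$ reduces to that of $(A-BLC,C)$, and the latter follows from detectability of the block-diagonal pair $(A,C)$ by invariance under the output injection $A\mapsto A+(-BL)C$. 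What each approach buys: yours yields Schur stability of the closed-loop matrix directly from standard linear Lyapunov theory, with no invariance-set reasoning and with the quadratic-storage step made explicit; the paper's buys uniformity with the nonlinear result that follows. One small point in your favor: you state the correct one-sided implication (decrement zero only when $y=0$), whereas the paper writes the set identity $\{x: V(x^+)-V(x)=0\}=\{x: y=0\}$, of which only the containment you use is actually guaranteed.
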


\vspace{0.1cm}

\begin{proof}
		Let us define the candidate global Lyapunov function as the sum of the {\modif individual} PD storage functions, $V(x) \coloneq \sum_{i=1}^{N} V_i(x_i)$, which we assume to be quadratic. Then, 
	\begin{align*}
		V(x^+) - V(x) & \le  z^\top u - y^\top \Q y \\
		& = (y+\R u)^\top u - y^\top \Q y \\
		& = - y^\top (L-L^\top \R L + \Q) y \,,
	\end{align*}
where in the last equality we used $u=-Ly$. By imposing condition \eqref{inequalitylinear}, from Lyapunov stability theory for DT systems \cite{BofLyapunovDT} we infer that the trajectories of \eqref{global dynamics} will converge to the largest invariant set contained in $\Pi = \{ x : V(x^+) - V(x) = 0  \} = \{ x : y=0  \}$.
	Note that $y=0$ is equivalent to the network being decoupled, in the sense that no signal is exchanged among neighbouring subsystems (see \eqref{linear interconnection}); hence we can analyse their stability individually. Since the {\modif individual} subsystems are detectable by assumption, all solutions compatible with $y$ identically equal to zero will converge to the origin. To conclude, the largest invariant set contained in $\Pi$ is the origin, and \eqref{global dynamics} is asymptotically stable.
\end{proof}

The objective is now to find decentralized conditions on the matrices $\Q_i$ and $\R_i$ such that the global condition \eqref{inequalitylinear} is satisfied. Note that, although $\Q$ and $\R$ are block diagonal matrices, the sparsity of the Laplacian $L$ renders inequality \eqref{inequalitylinear} non-trivial to decouple. To tackle this issue, the authors in \cite[Theorem 1]{AhmedPassivity} propose a set of decentralized sufficient conditions based on diagonal dominance. In the following, we generalize those conditions to reflect the fact that inequality \eqref{inequalitylinear} does not depend on $C$ anymore. 

\begin{prop} \label{Fact local ineq}
	Inequality \eqref{inequalitylinear} is feasible if $\Q_i$ and $\R_i$ are diagonal matrices such that
\begin{equation*}
		0 \prec \R_i \prec \tfrac{1}{2d_i}I\,, \quad 0 \prec \Q_i \prec d_i I \quad \forall i \in \mathcal{V}\,.
\end{equation*}
\end{prop}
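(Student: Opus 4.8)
The plan is to establish \eqref{inequalitylinear} directly at the level of the quadratic form, rather than through a diagonal-dominance / Gershgorin argument on the entries of $L-L^\top\R L+\Q$. Note first that $L=\L\otimes I$ is symmetric and positive semidefinite because $\L=\L^\top$, and that $\R=\mathrm{diag}(\R_1,\dots,\R_N)\succ0$, $\Q=\mathrm{diag}(\Q_1,\dots,\Q_N)\succ0$ under the stated hypotheses. Writing $M\coloneqq L-L^\top\R L+\Q=L+\Q-L\R L$, I would split the quadratic form for an arbitrary $y=(y_1^\top,\dots,y_N^\top)^\top\neq0$ (partitioned conformably with the subsystems) as
\begin{equation*}
	y^\top M y = y^\top\Q y + \bigl(y^\top L y - y^\top L\R L y\bigr),
\end{equation*}
and aim to show that the bracketed term is nonnegative, so that $y^\top My\ge y^\top\Q y>0$.

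The heart of the argument is the estimate $y^\top L\R L y\le y^\top L y$. Introducing $w\coloneqq Ly$, whose $i$-th block is $w_i=\sum_{j}a_{ij}(y_i-y_j)$, one has $y^\top L\R L y=\sum_i w_i^\top\R_i w_i$. The upper bound $\R_i\prec\tfrac{1}{2d_i}I$ gives $w_i^\top\R_i w_i\le\tfrac{1}{2d_i}\lVert w_i\rVert^2$, and a Cauchy--Schwarz step applied to $w_i=\sum_j a_{ij}(y_i-y_j)$ yields $\lVert w_i\rVert^2\le d_i\sum_j a_{ij}\lVert y_i-y_j\rVert^2$. Combining these two bounds cancels the factor $d_i$ and leaves $\sum_i w_i^\top\R_i w_i\le\tfrac12\sum_{i,j}a_{ij}\lVert y_i-y_j\rVert^2$. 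Recognizing the standard Laplacian identity $y^\top L y=\tfrac12\sum_{i,j}a_{ij}\lVert y_i-y_j\rVert^2$ then closes the estimate, giving $y^\top L\R L y\le y^\top L y$ and hence the nonnegativity of the bracket. Since $\Q\succ0$ forces $y^\top\Q y>0$ for every $y\neq0$, I conclude $M\succ0$.

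I expect the only delicate point to be the constant bookkeeping: the admissible bound $\tfrac{1}{2d_i}$ on $\R_i$ is exactly what is needed for the factor $d_i$ produced by Cauchy--Schwarz to cancel and for the remaining $\tfrac12$ to match the double-counting in the Laplacian quadratic form, so any of these three constants being off would break the chain. It is also worth noting that this argument uses only $\Q_i\succ0$ and the upper bound on $\R_i$; the upper bound $\Q_i\prec d_iI$ is not actually required for \eqref{inequalitylinear} itself, and is presumably imposed for other reasons (e.g.\ to keep the per-subsystem output-strict-passivity requirement of Lemma \ref{Lem passivity linear} satisfiable). Its presence as an extra, unused constraint does not affect the validity of the sufficient condition.

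Finally, the fact that $L$ is only positive semidefinite rather than definite causes no trouble: on the kernel of $L$ (the consensus directions $\mathbf{1}\otimes\xi$) the bracket vanishes and positivity is supplied entirely by $\Q$, which the $\ge$ estimate already accommodates. This is why no lower bound tying $\Q$ to the spectrum of $L$ is needed, and why the condition decouples into the simple per-node requirements stated in the proposition.
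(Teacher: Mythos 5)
Your proof is correct, and it takes a genuinely different route from the paper's. The paper's argument is purely matrix-algebraic: since $L\succeq 0$, inequality \eqref{inequalitylinear} is implied by $\Q - L\R L\succ 0$, which is rewritten via a Schur complement as $\begin{bsmallmatrix} \Q & L \\ \star & \R^{-1}\end{bsmallmatrix}\succ 0$ and then certified by diagonal dominance \cite[Theorem 6.1.10]{HornJohnsonMatrix} --- and that last step is exactly where the diagonality of $\Q_i$ and $\R_i$ is needed, and where the extra bound on $\Q_i$ originates. You instead keep the $L$ term and prove the operator inequality $L\R L\preceq L$ by hand, via the weighted Cauchy--Schwarz bound $\|w_i\|^2\le d_i\sum_j a_{ij}\|y_i-y_j\|^2$ and the Dirichlet-form identity $y^\top Ly=\tfrac12\sum_{i,j}a_{ij}\|y_i-y_j\|^2$; your Cauchy--Schwarz step is in essence an elementwise proof of the bound $\mathcal{L}\preceq 2\mathcal{D}$ that the paper establishes separately by Ger\v{s}gorin (Lemma \ref{Fact:Laplacianbound}, inequality \eqref{Fact:Laplacianbound1}) and deploys only later, in the proof of Theorem \ref{TheoremBounds}, where $\R^{-1}\succ 2D\succeq L$ plays the same role as your estimate. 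What your route buys is significant: it uses neither the diagonality of $\Q_i,\R_i$ nor the upper bound $\Q_i\prec d_iI$, so you in fact prove the stronger statement that $0\prec\R_i\prec\tfrac{1}{2d_i}I$ and $\Q_i\succ 0$ suffice --- precisely the relaxation the paper only recovers through Theorem \ref{TheoremBounds} (condition \eqref{Thm1:first} with $\alpha=1$) and advertises in its Discussion (``we lifted the conservative assumption that $Q_i$ and $R_i$ must be diagonal matrices, and got rid of the lower bound on $Q_i$''). Your closing remark that the bound $\Q_i\prec d_iI$ is never used is thus consonant with the paper itself, though your speculation about its purpose differs from the paper's explanation: it is not there to keep the per-subsystem passivity assumption satisfiable, but is an artifact of the diagonal-dominance decoupling inherited from the comparison with \cite[Thm. 1]{AhmedPassivity}, a conservatism the paper itself flags. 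In exchange, the paper's route yields a two-line proof once the Schur complement and a standard dominance theorem are available, whereas yours requires the explicit quadratic-form computation --- but yours is self-contained, sharper, and anticipates the decoupling machinery of the appendix.
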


\vspace{0.2cm}

\begin{proof}
	Since $L\succeq 0$, \eqref{inequalitylinear} is implied by $\hat{Q}-L\hat{R}L\succ0$ which, by the Schur complement and if $\hat{R}\succ0$, is satisfied if and only if 
	
	\begin{equation*}
		\begin{bmatrix} \hat{Q} & L \\ \star & \hat{R}^{-1}
		\end{bmatrix} \succ0 \,.
	\end{equation*}
The result follows by considering diagonal dominance \cite[Theorem 6.1.10]{HornJohnsonMatrix}.
\end{proof}

The conservative assumption that $\Q_i$ and $\R_i$ have to be diagonal matrices comes from the use of diagonal dominance as decoupling method and, by the end of the section, we will discuss how this assumption can be lifted.  

\subsection{Nonlinear Systems}

A closer look to the proof of Lemma \ref{Lem passivity linear} suggests that linearity of the subsystems is not crucial to study how the energy is dissipated among interconnected DT systems. We first report a definition that extends detectability to nonlinear systems, {\modif initially introduced in \cite{ByrnesZSD} for CT systems and then extended in \cite{LinByrnesPassivity} to the DT case.} 

\begin{defn} 
	The nonlinear system \eqref{subsystemsaffine} is said to be zero-state detectable if all solutions of $x^+_{i}=f_i(x_i,0)$ that are identically contained in the set $\Pi_i=\{ x_i : y_i = 0 \}$ converge to the origin.
\end{defn}

Our aim is to use $(Q,S,R)$-dissipativity and the true output (instead of passivity and the virtual output) by exploiting the energy interpretation discussed in the previous section. Since virtual outputs are a special case of $(Q,S,R)$-dissipativity (Observation \ref{Prop virtual output}), we expect to obtain a more general stability condition for the network dynamics. Moreover, we now consider the more general interconnection \eqref{linear interconnection} instead of the Laplacian coupling \eqref{coupling}.
Let $Q = \mbox{diag}(Q_1,\ldots,Q_N)$, $S = \mbox{diag}(S_1,\ldots,S_N)$, and $R = \mbox{diag}(R_1,\ldots,R_N)$, {\modif and recall Definition \ref{def:dissipativity} for the definition of locally dissipative system.}

\begin{thm}\label{lem:stabilitynonlinar}
	Assume that the nonlinear systems \eqref{subsystemsaffine} are coupled via \eqref{linear interconnection} and locally $(Q_i,S_i,R_i)$-dissipative with PD and $C^1$ storage functions $V_i: \mathcal{X}_i \to \mathbb{R}_{>0}$. Then, if
	\begin{equation}\label{LMI global}
		{\modif \begin{bmatrix} I \\ H	\end{bmatrix}\' \begin{bmatrix} Q & S \\ \star & R 
			\end{bmatrix} \begin{bmatrix} I \\ H	\end{bmatrix} \prec 0 \,,}
 	\end{equation}
	the origin of the network \eqref{global dynamics} is stable. Moreover, if the systems \eqref{subsystemsaffine} are zero-state detectable, the origin is asymptotically stable. 
\end{thm}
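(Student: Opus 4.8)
The plan is to mirror the argument of Lemma~\ref{Lem passivity linear}, taking the sum of the individual storage functions as a global Lyapunov candidate and then collapsing the coupled dissipation inequalities into the single quadratic form appearing in \eqref{LMI global}. First I would set $V(x) \coloneq \sum_{i=1}^{N} V_i(x_i)$, which is PD on $\mathcal{X} \coloneq \mathcal{X}_1 \times \cdots \times \mathcal{X}_N$ because each $V_i$ is. Summing the local $(Q_i,S_i,R_i)$-dissipation inequalities of Definition~\ref{def:dissipativity} over $i \in \mathcal{V}$ and collecting the subsystem outputs and inputs into the stacked vectors $y$ and $u$ gives, with $Q = \mbox{diag}(Q_i)$, $S = \mbox{diag}(S_i)$ and $R = \mbox{diag}(R_i)$,
\begin{equation*}
	V(x^+) - V(x) \le \begin{bmatrix} y \\ u \end{bmatrix}^\top \begin{bmatrix} Q & S \\ \star & R \end{bmatrix} \begin{bmatrix} y \\ u \end{bmatrix} \,.
\end{equation*}
The block-diagonality of $Q$, $S$, and $R$ is precisely what lets the sum of scalar supply rates be written as this single quadratic form.

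Next I would substitute the interconnection \eqref{linear interconnection}, $u = Hy$, so that $\begin{bsmallmatrix} y \\ u \end{bsmallmatrix} = \begin{bsmallmatrix} I \\ H \end{bsmallmatrix} y$. Writing $M \coloneq \begin{bsmallmatrix} I \\ H \end{bsmallmatrix}^\top \begin{bsmallmatrix} Q & S \\ \star & R \end{bsmallmatrix} \begin{bsmallmatrix} I \\ H \end{bsmallmatrix}$, this yields
\begin{equation*}
	V(x^+) - V(x) \le y^\top M y \,.
\end{equation*}
Condition \eqref{LMI global} is exactly $M \prec 0$, so $V(x^+) - V(x) \le y^\top M y \le 0$ for all $x$, with equality only when $y = 0$. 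Since $V$ is PD and nonincreasing along the trajectories of \eqref{global dynamics}, Lyapunov stability theory for DT systems \cite{BofLyapunovDT} yields stability of the origin, which settles the first claim.

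For the asymptotic-stability claim I would invoke the DT invariance (LaSalle) principle. Restricting attention to a compact sublevel set of $V$ contained in $\mathcal{X}$ so that trajectories are precompact, the solutions converge to the largest invariant set inside $\Pi = \{ x : V(x^+) - V(x) = 0 \} \subseteq \{ x : y = 0 \}$, the inclusion following from $M \prec 0$. Along any trajectory that stays in $\{ y = 0 \}$ one has $u = Hy \equiv 0$, so the dynamics collapse to the zero-input form $x_i^+ = f_i(x_i,0)$ with each component confined to $\Pi_i = \{ x_i : y_i = 0 \}$. Zero-state detectability then forces every such solution to the origin, so the largest invariant set contained in $\Pi$ is $\{0\}$, giving asymptotic stability.

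The routine steps, namely summing the inequalities and the quadratic-form rewriting, are immediate. I expect the main obstacle to lie in the local bookkeeping of the LaSalle step: since the $V_i$ are only defined on the $\mathcal{X}_i$, one must first isolate a sublevel set of $V$ on which all subsystem dissipation inequalities are valid and on which trajectories remain precompact before the invariance principle applies, and one must check that the reduction to the zero-input dynamics on $\{ y = 0 \}$ genuinely matches the zero-state detectability hypothesis.
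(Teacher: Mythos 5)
Your proposal is correct and follows essentially the same route as the paper's proof: the sum $V(x)=\sum_i V_i(x_i)$ as a global Lyapunov candidate, substitution of $u=Hy$ to collapse the summed supply rates into the quadratic form of \eqref{LMI global}, a compact invariant sublevel set $\Theta=\{x\in\mathcal{X}: V(x)\le\gamma\}$ to justify the invariance principle, and zero-state detectability to shrink the largest invariant set in $\{x : y=0\}$ (where the interconnection is effectively decoupled, $u=Hy=0$) to the origin. Your closing caveats about restricting to a sublevel set contained in $\mathcal{X}$ and matching the reduction to the zero-input dynamics with the detectability hypothesis are precisely the bookkeeping the paper's proof carries out, so there is no gap.
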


\begin{proof}
	 First, note that $x=0$ is an equilibrium of the network \eqref{global dynamics}. We define the candidate global Lyapunov function $V(x) = \sum_{i=1}^{N} V_i(x_i)$ and the set $\mathcal{X} = \mathcal{X}_1 \times \dots \times \mathcal{X}_N$. Then, for all $x\in \mathcal{X}$, it holds 
	\begin{align*}
		V(x^+) - V(x) & \le  y\' Q y + 2y\'Su + u\' R u \\
		& = y\'(Q + SH + H^\top S^\top + H^\top RH )y \,,
	\end{align*}
    where we used $u=Hy$. By imposing condition \eqref{LMI global}, we note that $V(x)>0$ for all $x\in \mathcal{X} \setminus \{0\}$ and $V(x^+)-V(x) \le 0$ for all $x\in \mathcal{X}$. Thus $x=0$ is a stable equilibrium \cite{BofLyapunovDT}. Furthermore, since $V(x)$ is PD, there exists $\gamma>0$ such that the sublevel set $\Theta = \{ x\in\mathcal{X} : V(x) \le \gamma \}$ is compact and invariant for \eqref{global dynamics}. Hence, any trajectory of \eqref{global dynamics} starting from $x(0)\in\Theta$ will converge to the largest invariant set contained in $\Pi = \{ x \in \Theta : V(x^+) - V(x) = 0  \} = \{ x \in \Theta : y=0  \}$,
    which is again equivalent to the network being decoupled. If the subsystems are zero-state detectable, then all solutions identically contained in $\Pi$ will converge to the origin. Hence $x=0$ is an asymptotically stable equilibrium.
\end{proof}

\begin{cor}
	If the systems are (globally) {\modif $(Q_i,S_i,R_i)$-}dissipative, \textit{i.e.}, $\mathcal{X}_i =\mathbb{R}^{n}$ for all $i\in\mathcal{V}$, and the {\modif individual} storage functions are radially unbounded, \textit{i.e.}, $\| x_i \| \to \infty$ implies $V_i(x_i) \to \infty$ for all $i\in\mathcal{V}$, the origin is globally asymptotically stable.
\end{cor}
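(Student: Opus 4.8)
The plan is to reuse the Lyapunov and invariance argument from the proof of Theorem~\ref{lem:stabilitynonlinar} almost verbatim, upgrading it from the local compact sublevel set $\Theta$ to the whole space $\mathbb{R}^{nN}$ by invoking the two additional hypotheses. Global dissipativity ($\mathcal{X}_i=\mathbb{R}^n$) guarantees that each individual dissipation inequality holds for every $x_i\in\mathbb{R}^n$, so the aggregate bound $V(x^+)-V(x)\le y\'(Q+SH+H^\top S^\top+H^\top R H)y$ established in the Theorem is now valid on the entire state space rather than only on $\mathcal{X}$. Radial unboundedness of each $V_i$ makes $V(x)=\sum_{i=1}^N V_i(x_i)$ itself a PD and radially unbounded function on $\mathbb{R}^{nN}$.

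First I would observe that, under condition \eqref{LMI global}, $V$ serves as a global Lyapunov function: it is PD and satisfies $V(x^+)-V(x)\le 0$ everywhere, which already yields Lyapunov stability of the origin exactly as in the Theorem. Next, because $V$ is radially unbounded, every sublevel set $\Theta_\gamma=\{\,x:V(x)\le\gamma\,\}$ is compact for all $\gamma>0$, and each is invariant for \eqref{global dynamics} since $V$ is non-increasing along trajectories. For an arbitrary initial condition $x(0)$, choosing $\gamma=V(x(0))$ places the entire trajectory inside the compact invariant set $\Theta_\gamma$, so the discrete-time invariance principle applies just as in the Theorem and the trajectory converges to the largest invariant set contained in $\Pi=\{\,x:y=0\,\}$. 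Zero-state detectability of the subsystems then collapses this invariant set to the origin, whence $x(t)\to 0$. As $x(0)$ was arbitrary, the region of attraction is all of $\mathbb{R}^{nN}$, and together with stability this establishes global asymptotic stability.

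The only delicate point is the passage from the local conclusion of the Theorem to a global one. The Theorem fixes a single value of $\gamma$ to produce one compact invariant set $\Theta$, thereby obtaining only local attraction; here, radial unboundedness ensures that \emph{every} sublevel set is compact, so arbitrarily distant initial conditions still lie in a compact invariant set on which the same invariance argument runs unchanged. This is precisely the discrete-time analogue of the Barbashin--Krasovskii theorem, and no estimate beyond those already proved is required.
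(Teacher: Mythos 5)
Your proposal is correct and takes essentially the same route as the paper: the paper's entire proof is a citation of \cite[Thm.~1.4]{BofLyapunovDT}, which is precisely the discrete-time Barbashin--Krasovskii-type result (radially unbounded Lyapunov function, compact invariant sublevel sets, invariance principle, detectability collapsing $\Pi=\{x : y=0\}$ to the origin) that you spell out explicitly. Your only addition is to make the implicit hypotheses of Theorem~\ref{lem:stabilitynonlinar} (condition \eqref{LMI global} and zero-state detectability) explicit, which is consistent with the corollary's intended reading.
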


\begin{proof}
	The result follows from \cite[Thm. 1.4]{BofLyapunovDT}.
\end{proof}

As expected, $(Q,S,R)$-dissipativity allows one to derive a more general stability condition. Indeed, the stability condition \eqref{inequalitylinear} can be seen as a special case of \eqref{LMI global} when the coupling is Laplacian and the subsystems are $(-\Q_i, \tfrac{1}{2}I, \R_i)$-dissipative, exposing a parallel with Observation \ref{Prop virtual output}. {\modif However, it is important to remark that not all systems \eqref{subsystemsaffine} that are dissipative are also $(Q,S,R)$-dissipative, further emphasizing that the conditions in Theorem \ref{lem:stabilitynonlinar} are only sufficient.}

\begin{rem}
		Note that if each subsystem has access to the exact amount of dissipated energy via the {\modif individual} dissipation rates $\phi_i(x_i,u_i)$ as in \eqref{powerbalance}, then the global $\phi(x,u) = \sum_{i=1}^{N}\phi_i(x_i,u_i)$ can be used to further improve the feasibility of \eqref{LMI global} as $V(x^+)-V(x) = s(y,u) - \phi(x,u)$.
\end{rem}

{\modif \begin{rem}
	By taking a conical combination of the individual storage functions instead of simply their sum, the global Lyapunov function can be defined as $\tilde{V}(x)=\sum_{i=1}^{N}\sigma_i V_i(x_i)$, with $\sigma_i>0$ for $i=1,\ldots,N$, so that \eqref{LMI global} becomes $\tilde{Q}+\tilde{S}H+H^\top  \tilde{S}^\top + H^\top \tilde{R} H \prec 0$ with $\tilde{Q} = \mbox{diag}(\alpha_1Q_1,\ldots,\alpha_NQ_N)$, $\tilde{S} = \mbox{diag}(\alpha_1S_1,\ldots,\alpha_NS_N)$, and $\tilde{R} = \mbox{diag}(\alpha_1R_1,\ldots,\alpha_NR_N)$. A similar operation is proposed, \textit{e.g.}, in \cite[Ch. 2]{ArcakDissipativity} in CT settings, and it is useful to enlarge the feasibility of \eqref{LMI global} when $Q_i$, $S_i$, and $R_i$ are fixed. However, we do not consider it here since $Q_i$, $S_i$, and $R_i$ are decision variables themselves. 
\end{rem}}

 \begin{rem}
 	Theorem~\ref{lem:stabilitynonlinar} offers an estimate of the region of attraction of the origin (provided that the sublevel sets of $V(x)$ are compact in $\mathcal{X}$), that can be computed from the {\modif individual} $V_i$ and $\phi_i$ as $\Theta^* = \{ x\in\mathcal{X} : V(x) \le \gamma^* \}$, where
 	\begin{equation*}
 		\gamma^* = \max \, \gamma \;\; \text{s.t.} \;\; V(x) \le \gamma, \; x\in\mathcal{X}\,.
 	\end{equation*}
 \end{rem} 

\vspace{0.2cm}

Next, we aim to find decentralized conditions on the matrices $Q_i$, $S_i$, and $R_i$ such that \eqref{LMI global} is feasible. For this task we work under Laplacian coupling ($H={\modif -} L$), so that we can exploit the properties of the Laplacian matrix. Instead of approximating with diagonal dominance as in Proposition \ref{Fact local ineq}, however, we use the results introduced in Appendixes \ref{appendix:matrix} and \ref{appendix:graph}. Let us first define the parameters $\alpha \in \mathbb{R}$, $\tilde{\alpha} = \max \{ 1-\alpha, \; 0 \}$, and $\mathcal{S}\in\mathbb{R}^{m \times m}$. The proof of the next result can be found in Appendix \ref{appendix:theorem}.

\begin{thm}\label{TheoremBounds}
	The LMI \eqref{LMI global} is feasible under Laplacian coupling if at least one of the following decentralized LMIs is satisfied for all $i \in \mathcal{V}$,
\begin{subequations} \label{LMIlocalprimal}
	\begin{align}
		S_i=\tfrac{1}{2}\alpha I, \; 0 \prec R_i \prec \tfrac{1}{2d_i}I, \; Q_i \prec - 2  d_i \tilde{\alpha} I \,, \label{Thm1:first} \\ 
		S_i=\mathcal{S} \succeq 0, \; 0\prec R_i \prec \tfrac{1}{2d_i}I, \; Q_i \prec -2d_iI \,, \label{Thm1:second} \\ 
		0 \preceq S_i \prec \tfrac{1}{3d_i}I, \; 0 \prec R_i \prec \tfrac{1}{2d_i}I, \; Q_i + S_i \prec -4d_iI \,, \label{Thm1:third} \\ 
		S_i\succeq 0, \; R_i+S_i \prec \tfrac{1}{2d_i}I, \; Q_i\prec -2S_i, \; Q_i\prec -4d_iI \,. \label{Thm1:fourth}
	\end{align}
\end{subequations}
\end{thm}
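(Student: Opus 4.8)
The plan is to turn the matrix inequality \eqref{LMI global} into a scalar statement about a sum of per-node supply rates and then dominate the resulting quadratic form by $y^\top L y$. Under Laplacian coupling $H=-L$ with $L=L^\top$ and $S=S^\top$, the expansion $Q+SH+H^\top S^\top+H^\top RH$ used in the proof of Theorem~\ref{lem:stabilitynonlinar} becomes $Q-SL-LS+LRL$, so \eqref{LMI global} holds if and only if $\sum_{i=1}^N s_i<0$ for every $y\neq0$, where $s_i:=y_i^\top Q_iy_i+2y_i^\top S_iu_i+u_i^\top R_iu_i$ and $u_i=(Hy)_i=\sum_{j\in\mathcal{N}_i}a_{ij}(y_j-y_i)$ is the coupling \eqref{coupling}. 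First I would record the two Laplacian estimates from Appendix~\ref{appendix:graph}, lifted through the Kronecker factor $\otimes I$: the degree bound $y^\top Ly\le 2\sum_i d_i\|y_i\|^2$, from $\mathcal{L}\preceq2\mathcal{D}$, and the flow bound $\sum_i\tfrac1{d_i}\|u_i\|^2=y^\top L(\mathcal{D}^{-1}\otimes I)Ly\le 2\,y^\top Ly$, from $\mathcal{L}\mathcal{D}^{-1}\mathcal{L}\preceq2\mathcal{L}$. Together these yield, whenever $R_i\prec\tfrac1{2d_i}I$, the crucial estimate $u^\top Ru=y^\top LRLy\le y^\top Ly$.

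With these in hand I would treat the four cases by different handlings of the indefinite cross term $-2\sum_iy_i^\top S_iu_i$. For \eqref{Thm1:first}, $S_i=\tfrac12\alpha I$ makes $S=\tfrac12\alpha I$, so the cross term collapses to $-\alpha\,y^\top Ly$ and the form is bounded by $y^\top Qy+(1-\alpha)y^\top Ly$; splitting on the sign of $1-\alpha$ and using $Q_i\prec-2d_i\tilde\alpha I$ together with the degree bound makes this negative. For \eqref{Thm1:second}, $S_i=\mathcal{S}\succeq0$ gives $S=I\otimes\mathcal{S}$ and hence $-2y^\top SLy=-2y^\top(\mathcal{L}\otimes\mathcal{S})y\le0$, since $\mathcal{L}\otimes\mathcal{S}\succeq0$; the claim then follows from $u^\top Ru\le y^\top Ly$, $Q_i\prec-2d_iI$, and the degree bound.

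For the node-dependent cases \eqref{Thm1:third}–\eqref{Thm1:fourth} I would symmetrize with the PSD Young inequality $2y_i^\top S_iu_i\le y_i^\top S_iy_i+u_i^\top S_iu_i$ (valid because $S_i\succeq0$), which gives $\sum_is_i\le\sum_i\big(y_i^\top(Q_i+S_i)y_i+u_i^\top(R_i+S_i)u_i\big)$. For \eqref{Thm1:third} I would use $R_i+S_i\prec\tfrac5{6d_i}I$ with the flow bound to get $\sum_iu_i^\top(R_i+S_i)u_i<\tfrac53y^\top Ly$, and $Q_i+S_i\prec-4d_iI$ with the degree bound to get $\sum_iy_i^\top(Q_i+S_i)y_i<-2y^\top Ly$, so the total is below $-\tfrac13y^\top Ly\le0$. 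For \eqref{Thm1:fourth} I would average $Q_i\prec-2S_i$ and $Q_i\prec-4d_iI$ into $Q_i+S_i\prec-2d_iI$ and combine with $R_i+S_i\prec\tfrac1{2d_i}I$, so that the two groups contribute $-y^\top Ly$ and $+y^\top Ly$. In every case, strictness on $\ker L$ (where $y^\top Ly=0$ and all $u_i=0$) is recovered from the strict negativity of the $Q$-conditions, each of which forces $Q_i\prec0$.

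I expect the main obstacle to be the node-dependent cross term in \eqref{Thm1:third}–\eqref{Thm1:fourth}: unlike the first two cases, $SL+LS$ is genuinely indefinite and does not factor through $\mathcal{L}$, so the Young symmetrization combined with the degree and flow Laplacian bounds is what makes it tractable. The delicate bookkeeping is matching the numerical thresholds $\tfrac1{2d_i}$, $\tfrac1{3d_i}$, $2d_i$, and $4d_i$ so that the positive multiple of $y^\top Ly$ produced by the $R_i+S_i$ terms is absorbed by the negative multiple produced by the $Q_i+S_i$ terms, leaving a nonpositive remainder; the factor $2$ appearing in both Laplacian bounds is precisely what pins down these constants.
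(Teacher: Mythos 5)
Your proposal is correct, but it reaches Theorem~\ref{TheoremBounds} by a genuinely different route than the paper. The paper argues at the matrix level: each case is reduced via Schur complements and the block-PSD criterion (Fact~\ref{Fact:psd}) to coupled conditions such as $R^{-1}\succ L$ and $Q\prec -(1-\alpha)L$, which are then decoupled using $\mathcal{L}\preceq 2\mathcal{D}$; case \eqref{Thm1:third} additionally requires a congruence with $\begin{bsmallmatrix} I \\ L \end{bsmallmatrix}$ exploiting $LL^\dagger L = L$ together with the pseudoinverse bound \eqref{Fact:Laplacianbound2}, and case \eqref{Thm1:fourth} is handled by adding and subtracting $\tfrac{1}{2}LD^{-1}L$. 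You instead work with the scalar quadratic form $y^\top(Q-SL-LS+LRL)y$ (legitimate here, since all four cases force $S_i=S_i^\top$, so \eqref{LMI global} is exactly strict negativity of this form) and replace the pseudoinverse machinery entirely by Young's inequality $2y_i^\top S_i u_i \le y_i^\top S_i y_i + u_i^\top S_i u_i$ plus the normalized-Laplacian estimate $\mathcal{L}\mathcal{D}^{-1}\mathcal{L}\preceq 2\mathcal{L}$. The latter you assert without proof; it is true, and in fact equivalent to \eqref{Fact:Laplacianbound1}: writing $\mathcal{L}=\mathcal{D}^{1/2}M\mathcal{D}^{1/2}$ with $M=\mathcal{D}^{-1/2}\mathcal{L}\mathcal{D}^{-1/2}$, the bound $\mathcal{L}\preceq 2\mathcal{D}$ gives $0\preceq M\preceq 2I$, hence $M^2\preceq 2M$ and $\mathcal{L}\mathcal{D}^{-1}\mathcal{L}=\mathcal{D}^{1/2}M^2\mathcal{D}^{1/2}\preceq 2\mathcal{L}$ — you should include this two-line derivation. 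With it, your constants check out in all four cases (in \eqref{Thm1:third} with slack: your factor $2$ is sharper than the factor $3$ implicitly delivered by \eqref{Fact:Laplacianbound2} through $LD^{-1}L\preceq 3L$, which is why you land at $-\tfrac{1}{3}y^\top Ly$, or more cleanly $-\tfrac{2}{3}y^\top Dy$, rather than at zero), your convex-combination step $Q_i\prec -S_i-2d_iI$ in \eqref{Thm1:fourth} is valid, and your strictness bookkeeping closes the argument: off $\ker L$ the margin is strictly negative, and on $\ker L\setminus\{0\}$ all $u_i$ vanish and the form reduces to $\sum_i y_i^\top Q_i y_i<0$ since each case forces $Q_i\prec 0$. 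As for what each approach buys: the paper's matrix route produces reusable intermediate LMIs and handles the $S$-term in \eqref{Thm1:second} via the Laplacian-flow Lyapunov inequality (Proposition~\ref{lem:laplacianflow}) — which you rederive directly as $SL+LS=2(\mathcal{L}\otimes\mathcal{S})\succeq 0$ — while your scalar route is more elementary, avoids Facts~\ref{fact:Laplacianpseudo}--\ref{FactRegLaplacian} and \eqref{Fact:Laplacianbound2} altogether, and exposes that the threshold $\tfrac{1}{3d_i}$ in \eqref{Thm1:third} is not tight for your method.
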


\vspace{0.2cm}

Thanks to conditions \eqref{LMIlocalprimal}, each subsystem can {\modif individually} verify whether its dissipation properties guarantee the stability of their interconnection. Moreover, in case of linear systems, by combining \eqref{LMIlocalprimal} with \eqref{LMIprimal} each subsystem can design dissipative controllers that guarantee network stability. We stress that the only local parameter needed to evaluate the conditions \eqref{LMIlocalprimal} is the weighted degree $d_i$ of each node. The global parameters $\alpha$ and $\mathcal{S}$ in \eqref{Thm1:first} and \eqref{Thm1:second} need to be available in advance and be the same for each subsystem. Algorithms based on message exchange protocols, as reviewed in \cite{DCOPsurvey}, can be used to select those parameters in a distributed way. In Section \ref{sec:microgrids}, we will provide a numerical comparison of the four conditions.

\subsection{Discussion} \label{sec:discussion}

In the remainder of this section we discuss various aspects highlighting important points of our results.

\vspace{0.1cm}

\subsubsection*{Trivial Decomposition}\label{remarkR} 
	One could be tempted to simply {\modif infer that \eqref{LMI global} is feasible if (check Fact \ref{Fact:congruence} in Appendix \ref{appendix:matrix})}
	\begin{equation}\label{LMIwrong}
	{\modif	\begin{bmatrix} Q & S \\ \star & R 
		\end{bmatrix} \prec 0 \,. }
	\end{equation}
	However, \eqref{LMIwrong} holds only if $R\prec 0$ (\textit{i.e.}, $R_i \prec 0$ for all $i \in \mathcal{V}$), which violates the assumption that the subsystems are $(Q_i,S_i,R_i)$-dissipative in the sense of Lemma~\ref{corollary:R}. 

\vspace{0.1cm}

\subsubsection*{Comparison with Passivity and Virtual Output} \label{remark comparison}
	By setting $\alpha = 1$ in \eqref{Thm1:first} we obtain $S_i=\tfrac{1}{2}I$, hence we can directly compare our bound with the one in Proposition~\ref{Fact local ineq}, which was derived starting from output strict passivity with respect to a virtual output. In this case, with $\alpha = 1$, \eqref{Thm1:first} is equivalent to 
	\begin{equation*}\label{localcondition}
		0 \prec R_i \prec \tfrac{1}{2d_i} I\,, \quad Q_i \prec 0 \quad \forall i \in \mathcal{V}\,.
	\end{equation*}
By direct comparison, we can assess that this is a generalization of Proposition \ref{Fact local ineq}. Indeed, we lifted the conservative assumption that $Q_i$ and $R_i$ must be diagonal matrices, and got rid of the lower bound on $Q_i$ (equivalently, the upper bound on $\Q_i$). We speculate that this major improvement is possible both because we started from the more general stability condition \eqref{LMI global}, and because we employed decoupling techniques that do not make use of diagonal dominance, which is typically rather conservative. 

\vspace{0.1cm}

\subsubsection*{Role of Network Topology}
	By inspecting the stability condition \eqref{LMI global}, we realize that it only depends on the interplay between the ($Q,S,R$) matrices and the interconnection matrix $H$. One can then fix the matrices ($Q,S,R$) and infer which network property facilitates the stabilization problem. In case of Laplacian coupling, we note that both the opposite of the degree $-d_i$ and its inverse $\tfrac{1}{d_i}$ appear as upper bounds on $Q$, $S$, and $R$ in Theorem~\ref{TheoremBounds}. This suggests that, as one might expect, networks that are dense and with high interconnection weights tend to be more difficult to stabilize via decentralized control. 
	
On the other hand, by regarding \eqref{LMI global} as a quadratic matrix inequality (QMI) in the matrix variable $H$, one can conclude that \eqref{LMI global} is nonempty only if $S^\top R^\dagger S - Q \succ0$ \cite{HenkQMI}. Moreover, one could try and infer when $H$ facilitates the feasibility of the QMI, or even develop robust feasibility conditions for uncertain topologies in the spirit of \cite{HenkQMI}.
	
	On the other hand, when $H=0$ then \eqref{LMI global} reduces to $Q \prec 0$. This agrees with the fact that an isolated dissipative system is stable if $Q_i\prec 0$.  Indeed, as discussed at the end of Section \ref{sec:feedbackdissipativity}, the zero-input dynamics is stable if $Q_i\prec0$ and asymptotically stable under zero-state detectability assumption.
	
	Regarding the interconnection model \eqref{linear interconnection}, we already mentioned that the matrix $H$ can represent a variety of coupling structures. For instance, when $H=(\mathcal{A}-\mathcal{A}^\top) \otimes I$, the interconnection is called \textit{skew-symmetric} and can be used to model certain power distribution systems \cite{PulkitAutomatica}. Another fundamental example is the feedback interconnection of two dissipative systems, which is captured by $H=\begin{bsmallmatrix} 0 & I \\ I & 0 \end{bsmallmatrix}$. In this case, \eqref{LMI global} reduces to $\begin{bsmallmatrix} Q_1 + R_2 & S_1+S_2^\top \\ \star & R_1+Q_2 \end{bsmallmatrix} \prec 0$, recovering the result obtained in \cite{McCourtDissipSwitched} for DT dissipative switched systems. For a more complete review of linear interconnection models, the interested reader is referred to \cite{ArcakDissipativity}.
	Evidently, for each individual case, one needs to decouple \eqref{LMI global} by using the properties of each specific matrix $H$.
	
	\vspace{0.1cm}
	
\subsubsection*{Dual Approach} Next, we derive a formulation of the local conditions \eqref{LMIlocalprimal} in the dual supply rate matrices $(\mathscr{Q}, \mathscr{S},\mathscr{R})$.
	
\begin{prop} \label{prop:dualLMI}
	The inequality \eqref{LMI global} is satisfied with $Q\prec0$ and $R\succ0$ if and only if 
	\begin{equation} \label{LMIglobaldual}
		{\modif \begin{bmatrix} -H^\top \\ I	\end{bmatrix}\' \begin{bmatrix} \mathscr{Q} & \mathscr{S} \\ \star & \mathscr{R}
		\end{bmatrix} \begin{bmatrix} -H^\top \\ I	\end{bmatrix} \succ 0 }
	\end{equation}
holds with $\mathscr{Q}\prec 0$ and $\mathscr{R} \succ 0$,  where $\begin{bsmallmatrix} \mathscr{Q} & \mathscr{S} \\ \star & \mathscr{R}
\end{bsmallmatrix} = \begin{bsmallmatrix} Q & S \\ \star & R
\end{bsmallmatrix}^{-1}$.
\end{prop}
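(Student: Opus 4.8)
The plan is to recognize Proposition~\ref{prop:dualLMI} as a direct instance of the Dualization Lemma, exactly as in the proof of Lemma~\ref{lem:dissipativecontroldual}, but applied to the open-loop interconnection inequality rather than to a closed-loop dissipativity certificate. Writing $\Phi \coloneq \begin{bsmallmatrix} Q & S \\ \star & R \end{bsmallmatrix}$, the primal condition \eqref{LMI global} reads $\begin{bsmallmatrix} I \\ H \end{bsmallmatrix}^\top \Phi \begin{bsmallmatrix} I \\ H \end{bsmallmatrix} \prec 0$, and the goal is to convert it into the dual inequality with $\Phi^{-1}$ on the complementary subspace.

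First I would establish that the two ``selector'' subspaces appearing in \eqref{LMI global} and \eqref{LMIglobaldual} are orthogonal complements. A one-line computation gives $\begin{bsmallmatrix} I \\ H \end{bsmallmatrix}^\top \begin{bsmallmatrix} -H^\top \\ I \end{bsmallmatrix} = -H^\top + H^\top = 0$, and since the two column spaces have dimensions $pN$ and $mN$ that sum to the ambient dimension $pN+mN$, the image of $\begin{bsmallmatrix} -H^\top \\ I \end{bsmallmatrix}$ is precisely the orthogonal complement of the image of $\begin{bsmallmatrix} I \\ H \end{bsmallmatrix}$. This is the complementary-subspace structure that Fact~\ref{Fact:Duality} requires. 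Next I would verify the inertia hypothesis: under $Q\prec 0$ and $R\succ 0$, Haynsworth's theorem \cite[4.5.P21]{HornJohnsonMatrix} gives $\text{In}(\Phi)=\text{In}(R)+\text{In}(Q-SR^{-1}S^\top)$, and since $R\succ 0$ while $Q-SR^{-1}S^\top \preceq Q \prec 0$, this yields $\text{In}(\Phi)=(pN,0,mN)$, which matches the subspace dimensions exactly. In particular $\Phi$ is nonsingular, so $\Phi^{-1}=\begin{bsmallmatrix} \mathscr{Q} & \mathscr{S} \\ \star & \mathscr{R} \end{bsmallmatrix}$ is well defined.

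With these two facts in hand, applying the Dualization Lemma immediately converts the strict primal inequality on the $pN$-dimensional subspace into the strict dual inequality $\begin{bsmallmatrix} -H^\top \\ I \end{bsmallmatrix}^\top \Phi^{-1} \begin{bsmallmatrix} -H^\top \\ I \end{bsmallmatrix} \succ 0$ on the complementary $mN$-dimensional subspace, which is precisely \eqref{LMIglobaldual}. It then remains to transport the sign constraints between the primal and dual supply-rate matrices. By block inversion one has $\mathscr{Q}=(Q-SR^{-1}S^\top)^{-1}$ and $\mathscr{R}=(R-S^\top Q^{-1}S)^{-1}$; because $Q-SR^{-1}S^\top \prec 0$ and $R-S^\top Q^{-1}S \succ 0$ (using $-S^\top Q^{-1}S \succeq 0$ when $Q\prec 0$), it follows that $\mathscr{Q}\prec 0$ and $\mathscr{R}\succ 0$, establishing the equivalence $Q\prec 0,\,R\succ 0 \iff \mathscr{Q}\prec 0,\,\mathscr{R}\succ 0$ (cf.\ \cite{BlockMatrixInversion}). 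The converse direction is identical, since $\text{In}(\Phi)=\text{In}(\Phi^{-1})$ makes the whole argument reversible.

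The main obstacle is the inertia bookkeeping: the Dualization Lemma gives the wrong conclusion unless the negative inertia of $\Phi$ equals the dimension $pN$ of the primal subspace, so the crux is to make the computation $\text{In}(\Phi)=(pN,0,mN)$ airtight via Haynsworth and to confirm that the \emph{strict} versions of the inequalities (rather than the non-strict ones) survive the dualization. The sign assumptions $Q\prec 0$ and $R\succ 0$ are exactly what force the correct inertia; everything else reduces to the same Schur-complement and block-inversion manipulations already used in Lemma~\ref{lem:dissipativecontroldual}.
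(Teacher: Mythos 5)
Your proof is correct and takes essentially the same route as the paper, whose proof is a one-line application of the Dualization Lemma (Fact~\ref{Fact:Duality}) with $(A,B,C,M)=(Q,S,R,H)$ together with the equivalence $Q\prec 0,\,R\succ 0 \iff \mathscr{Q}\prec 0,\,\mathscr{R}\succ 0$ from block matrix inversion. Your additional bookkeeping --- the orthogonal-complement check of the two selector matrices, the Haynsworth inertia computation $\text{In}(\Phi)=(pN,0,mN)$, and the Schur-complement sign transport --- correctly fills in the details the paper leaves implicit in citing Fact~\ref{Fact:Duality} and \cite{BlockMatrixInversion}.
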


\begin{proof}
	{\modif The result is obtained by applying the Dualization Lemma (Fact \ref{Fact:Duality} in Appendix \ref{appendix:matrix}) to \eqref{LMI global} and recalling that $Q\prec 0, \; R\succ 0 \iff \mathscr{Q}\prec 0, \; \mathscr{R}\succ 0$. }
\end{proof}

Because of block matrix inversion properties \cite{BlockMatrixInversion}, we note that the matrices $\mathscr{Q}$, $\mathscr{S}$, and $\mathscr{R}$ in \eqref{LMIglobaldual} retain the same block diagonal structure of $Q$, $S$, and $R$ in \eqref{LMI global}. Thus by setting $\mathscr{Q} = \mbox{diag}(\mathscr{Q}_1,\ldots,\mathscr{Q}_N)$, $\mathscr{S} = \mbox{diag}(\mathscr{S}_1,\ldots,\mathscr{S}_N)$, and $\mathscr{R} = \mbox{diag}(\mathscr{R}_1,\ldots,\mathscr{R}_N)$, we observe that $\mathscr{Q}_i$, $\mathscr{S}_i$, and $\mathscr{R}_i$ are the dual matrices associated to system $i\in\mathcal{V}$. 

\begin{cor} \label{cor:LMI}
		The LMI \eqref{LMIglobaldual} is feasible under Laplacian coupling if at least one of the following decentralized LMIs is satisfied for all $i \in \mathcal{V}$,
	\begin{subequations} \label{LMIlocaldual}
		\begin{align}
			\mathscr{S}_i=\tfrac{1}{2}\alpha I, \; -\tfrac{1}{2d_i}I \prec \mathscr{Q}_i \prec 0, \; \mathscr{R}_i \succ 2  d_i \tilde{\alpha} I \,, \label{Thm2:first} \\ 
			\mathscr{S}_i=\mathcal{S} \succeq 0, \; -\tfrac{1}{2d_i}I \prec \mathscr{Q}_i \prec 0, \; \mathscr{R}_i \succ 2d_iI \,, \label{Thm2:second} \\ 
			0 \preceq \mathscr{S}_i \prec \tfrac{1}{3d_i}I, \; -\tfrac{1}{2d_i}I \prec \mathscr{Q}_i \prec 0, \; \mathscr{R}_i - \mathscr{S}_i \succ 4d_iI \,, \label{Thm2:third} \\ 
			\mathscr{S}_i\succeq 0, \; \mathscr{Q}_i-\mathscr{S}_i \succ -\tfrac{1}{2d_i}I, \; \mathscr{R}_i \succ 2\mathscr{S}_i, \; \mathscr{R}_i \succ 4d_iI \,. \label{Thm2:fourth}
		\end{align}
	\end{subequations}
\end{cor}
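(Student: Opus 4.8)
The plan is to avoid re-deriving any decoupling bounds and instead reuse Theorem~\ref{TheoremBounds} wholesale, by observing that under Laplacian coupling the dual inequality \eqref{LMIglobaldual} is, up to a sign and a relabelling of the supply-rate blocks, an instance of the primal inequality \eqref{LMI global}.

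First I would expand \eqref{LMIglobaldual} for $H=-L$. Since $L=L^\top$, we have $-H^\top=L$, so the quadratic form collapses to $L\mathscr{Q}L+L\mathscr{S}+\mathscr{S}^\top L+\mathscr{R}\succ 0$. Negating this and setting $\bar{Q}:=-\mathscr{R}$, $\bar{S}:=\mathscr{S}$, $\bar{R}:=-\mathscr{Q}$ yields $\bar{Q}-L\bar{S}-\bar{S}^\top L+L\bar{R}L\prec 0$. On the other hand, the Laplacian form of the primal LMI \eqref{LMI global} expands to $Q-SL-LS^\top+LRL\prec 0$; hence the negated dual inequality is exactly the Laplacian form of \eqref{LMI global} with $(\bar{Q},\bar{S},\bar{R})$ substituted for $(Q,S,R)$. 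Matching the cross terms uses only that $\mathscr{S}_i$ is symmetric, which holds in each of the four dual conditions, so that $-L\bar{S}-\bar{S}^\top L$ and $-\bar{S}L-L\bar{S}^\top$ coincide.

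Next I would apply Theorem~\ref{TheoremBounds} to the blocks $(\bar{Q},\bar{S},\bar{R})$: the inequality $\bar{Q}-L\bar{S}-\bar{S}^\top L+L\bar{R}L\prec 0$ is feasible whenever $(\bar{Q}_i,\bar{S}_i,\bar{R}_i)$ obey one of \eqref{Thm1:first}--\eqref{Thm1:fourth} for every $i\in\mathcal{V}$, and feasibility of this inequality is by construction equivalent to \eqref{LMIglobaldual}. It then remains to rewrite each primal bound in the dual variables via $\bar{Q}_i=-\mathscr{R}_i$, $\bar{S}_i=\mathscr{S}_i$, $\bar{R}_i=-\mathscr{Q}_i$. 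For example \eqref{Thm1:first} reads $\bar{S}_i=\tfrac12\alpha I$, $0\prec\bar{R}_i\prec\tfrac{1}{2d_i}I$, $\bar{Q}_i\prec-2d_i\tilde{\alpha}I$, which becomes $\mathscr{S}_i=\tfrac12\alpha I$, $-\tfrac{1}{2d_i}I\prec\mathscr{Q}_i\prec 0$, $\mathscr{R}_i\succ 2d_i\tilde{\alpha}I$, i.e.\ exactly \eqref{Thm2:first}; the same substitution carries \eqref{Thm1:second}--\eqref{Thm1:fourth} onto \eqref{Thm2:second}--\eqref{Thm2:fourth}.

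I do not anticipate a substantive obstacle, as all the real work is inherited from Theorem~\ref{TheoremBounds}. The only points needing care are purely clerical: tracking the sign reversals induced by $\bar{R}=-\mathscr{Q}$ and $\bar{Q}=-\mathscr{R}$, which convert the upper bounds on $R_i$ and $Q_i$ into the two-sided and lower bounds on $\mathscr{Q}_i$ and $\mathscr{R}_i$ in \eqref{LMIlocaldual}, and checking the symmetry of $\mathscr{S}_i$ used in the cross-term matching. As a consistency check, the same correspondence can be read off from Proposition~\ref{prop:dualLMI} together with the block-inversion relation between the primal and dual blocks, but the direct structural substitution above makes the passage from \eqref{LMIlocalprimal} to \eqref{LMIlocaldual} most transparent.
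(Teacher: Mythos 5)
Your proposal is correct and is essentially the paper's own proof: the paper's argument is exactly the change of variables $(Q_i,S_i,R_i)=(-\mathscr{R}_i,\mathscr{S}_i^\top,-\mathscr{Q}_i)$ applied to Theorem~\ref{TheoremBounds}, and your expansion of \eqref{LMIglobaldual} under $H=-L$ merely makes that substitution explicit. The only cosmetic difference is that you set $\bar{S}=\mathscr{S}$ and invoke symmetry of $\mathscr{S}_i$ to match the cross terms, whereas taking $\bar{S}=\mathscr{S}_i^\top$ as the paper does renders that symmetry check unnecessary.
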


\vspace{0.2cm}

\begin{proof}
	Conditions \eqref{LMIlocaldual} follow from \eqref{LMIlocalprimal} by considering the change of variables $(Q_i,S_i,R_i)=(-\mathscr{R}_i, \mathscr{S}_i^\top, \mathscr{-Q}_i)$.
\end{proof}

Thanks to this result, each {\modif individual} subsystem can pair LMIs \eqref{LMIdual} and \eqref{LMIlocaldual} to design dissipative controllers and guarantee interconnection stability at the same time. We point out that the assumptions $Q\prec0$ and $R\succ0$ in Proposition \ref{prop:dualLMI} are the same appearing in Lemma \ref{lem:dissipativecontroldual}, and that the dual conditions \eqref{LMIlocaldual} are equivalent to \eqref{LMIlocalprimal} up to a change of variables. Therefore, recalling that $R\succeq0$ is a necessary condition for $(Q,S,R)$-dissipativity, if $Q$, $S$, and $R$ are fixed then the primal LMIs \eqref{LMIprimal}, \eqref{LMIlocalprimal} are equivalent to the dual ones \eqref{LMIdual}, \eqref{LMIlocaldual} provided that $R$ is non-singular. However, the main advantage of the dual LMIs is that they are linear in $\mathscr{Q}$, $\mathscr{S}$, and $\mathscr{R}$, hence we can efficiently treat them as decision variables.

\vspace{0.1cm}

\subsubsection*{Continuous-Time Systems}
For comparison purposes, consider a set of $N$ control-affine CT subsystems 
\begin{subequations}\label{subsystemsCT}
	\begin{align}
		\dot{x}_i & = f_i(x_i) + g_i(x_i)u_i \\
		y_i & = h_i(x_i) \,,
	\end{align}
\end{subequations}
with $f_i(0)=0$, $h_i(0)=0$ for all $i \in \mathcal{V}$, interconnected to each other via Laplacian coupling \eqref{coupling}. 

\begin{fact} \label{fact:chopra}
	\cite[Thm. 2.1]{ChopraPassivity}
	If subsystems \eqref{subsystemsCT} are passive with PD storage functions $V_i : \mathbb{R}^{n} \to \mathbb{R}_{>0}$, then the origin of the network is stable.
\end{fact}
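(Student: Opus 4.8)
The plan is to mirror the Lyapunov argument used in the proof of Theorem~\ref{lem:stabilitynonlinar}, replacing the forward difference $V(x^+)-V(x)$ by the time derivative $\dot V$. First I would confirm that $x=0$ is an equilibrium of the interconnected network. Since $f_i(0)=0$ and $h_i(0)=0$, evaluating at $x=0$ gives every output $y_i=h_i(0)=0$, so the Laplacian coupling \eqref{coupling} yields $u_i=0$ and hence $\dot x_i=f_i(0)+g_i(0)\,u_i=0$ for all $i$, independently of $g_i(0)$. Thus the origin is indeed an equilibrium at which the Lyapunov analysis can be anchored.

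Next I would take the candidate global Lyapunov function $V(x)=\sum_{i=1}^{N}V_i(x_i)$, which is PD because it is a sum of PD functions. Differentiating along the network trajectories and invoking the passivity inequality $\dot V_i\le y_i^\top u_i$ of each subsystem, one obtains
\begin{equation*}
\dot V(x)=\sum_{i=1}^{N}\dot V_i(x_i)\le\sum_{i=1}^{N}y_i^\top u_i=y^\top u .
\end{equation*}
Writing the Laplacian coupling \eqref{coupling} in the stacked form $u=-Ly$ with $L=\mathcal{L}\otimes I$, this becomes $\dot V(x)\le -\,y^\top L y$. Because the weighted Laplacian $\mathcal{L}=\mathcal{L}^\top$ is positive semidefinite and the Kronecker product of a PSD matrix with the identity remains PSD, we have $L\succeq 0$, so $-\,y^\top L y\le 0$ and therefore $\dot V(x)\le 0$ for all $x$. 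Standard Lyapunov stability theory for CT systems then certifies that the origin is stable, which is exactly the claim of Fact~\ref{fact:chopra}.

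I expect no single step to constitute a genuine obstacle; the computation is a clean combination of the additivity of the storage functions, the passivity supply bound, and positive semidefiniteness of the Laplacian. The only subtlety worth flagging is why the conclusion is \emph{stability} rather than asymptotic stability: passivity (as opposed to strict or output strict passivity) provides no strict decrease, so $\dot V\le 0$ cannot be sharpened to $\dot V<0$, and one cannot close the argument with a LaSalle/zero-state-detectability step as in the second half of Theorem~\ref{lem:stabilitynonlinar}. This is precisely the gap that forces Fact~\ref{fact:chopra} to assert stability only, and it is the point I would be careful to state explicitly rather than overclaim.
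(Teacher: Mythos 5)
Your proof is correct and follows exactly the route the paper indicates for this fact: a global Lyapunov function $V(x)=\sum_{i=1}^{N}V_i(x_i)$, the passivity supply bound summed over subsystems, and positive semidefiniteness of the Laplacian under the coupling $u=-Ly$, yielding $\dot V\le 0$ and hence stability. Your closing remark on why only stability (not asymptotic stability) follows is also exactly the right observation, matching the paper's use of zero-state detectability to sharpen the conclusion in Theorem~\ref{lem:stabilitynonlinar}.
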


\iffalse
The following corollary can be introduced, whose proof is along the same lines of Lemma \ref{Lem passivity linear} and \ref{lem:stabilitynonlinar} and thus omitted. 

\begin{cor} \label{cor:stabilityCT}
	If, in addition to the assumptions in Fact \ref{fact:chopra}, the subsystems are zero-state observable, then the origin of the global dynamics \eqref{global dynamics CT} is globally asymptotically stable.
\end{cor}
\fi

Similarly to the proofs of Lemma \ref{Lem passivity linear} and Theorem \ref{lem:stabilitynonlinar}, the proof of Fact \ref{fact:chopra} relies on a global Lyapunov function which is the sum of the {\modif individual} storage functions. We note that a similar proof can be constructed for the DT subsystems \eqref{subsystemsaffine}; that is, if subsystems \eqref{subsystemsaffine} are passive with PD storage functions, then the origin of the network \eqref{global dynamics} is stable. However, the assumption is vacuous since the DT subsystems without feedthrough \eqref{subsystemsaffine} cannot be passive. To conclude, we point out that Theorem \ref{lem:stabilitynonlinar} can be regarded as a DT counterpart of the CT results obtained in \cite{PotaLocalDissipativity}.

%%%%%%%%%%%%%%%%%%%%%%%%%%%%%%%%%%%%%%%%%%%%%%%%%%%%%%%%%

\section{Application to Microgrids Control} \label{sec:microgrids}

\begin{figure}[]
	\centering
	\begin{circuitikz}[scale=0.55]
		\ctikzset{bipoles/length=1cm}
		\draw (1,0) to[V, v=$V_{in}$, invert] (1,3);
		\draw (1,3) to[R,l=$R$] (3.5,3);
		\draw (3.5,3) to[L,i>^=$I$,l=$L$,] (6,3);
		\draw (6,0) to[C,l=$C$] (6,3);
		\draw (6,0) -- (2,0);
		\draw (2,0) -- (1,0); 
		\draw (6,3) -- (8.2,3) to[american current source,l=$I_{L}$] (8.2,0) -- (6,0);
		%\draw[-triangle 45] (7.1,0.2) -- (7.1,2.8) node[right, midway] {$V(t)$};
		\draw (8.2,3) to[short,i<=$I_{G}$] (11,3) to[short,-o] (11,3) -- (12.2,3);
		\draw (8.2,0) to[short,-o] (11,0) -- (12.2,0);
		\node at (11,2.5) {$+$};
		\node at (11,0.5) {$-$};
		\node at (11,1.5) {$V$};
		%\draw [draw=black] (12,-0.5) rectangle node[] {grid} (14,3.5);
		\node [draw, cloud, cloud puffs=15, aspect=2, cloud puff arc=120, line width=0.5, rotate=270, fill=blue!10, minimum width=2cm, minimum height=1cm] at (13,1.5) {microgrid};
	\end{circuitikz}
	\caption{Model of a distributed generation unit (DGU), comprising an input voltage, a Buck converter and a local load, connected to the rest of the microgrid.}
	\label{Fig:Buck}
\end{figure}
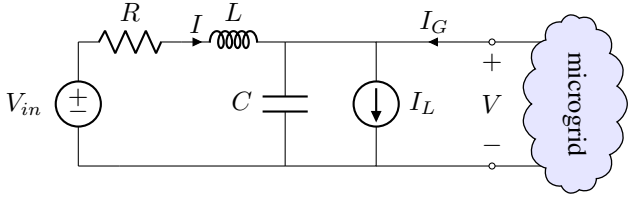

The increasing penetration of renewable resources and recent advances in power electronics have fuelled an increasing interest towards smart and flexible energy distribution systems such as microgrids. A microgrid usually comprises a set of spatially distributed subsystems -- called distributed generation units (DGUs) -- interconnected to each other via transmission lines.  These systems can operate either attached to the main grid or, as we consider in this example, in islanded mode.

\subsection{Microgrid Model}

\begin{figure}
	\centering
	\includegraphics[width=\columnwidth,trim={0 1.5cm 0 0.8cm},clip]{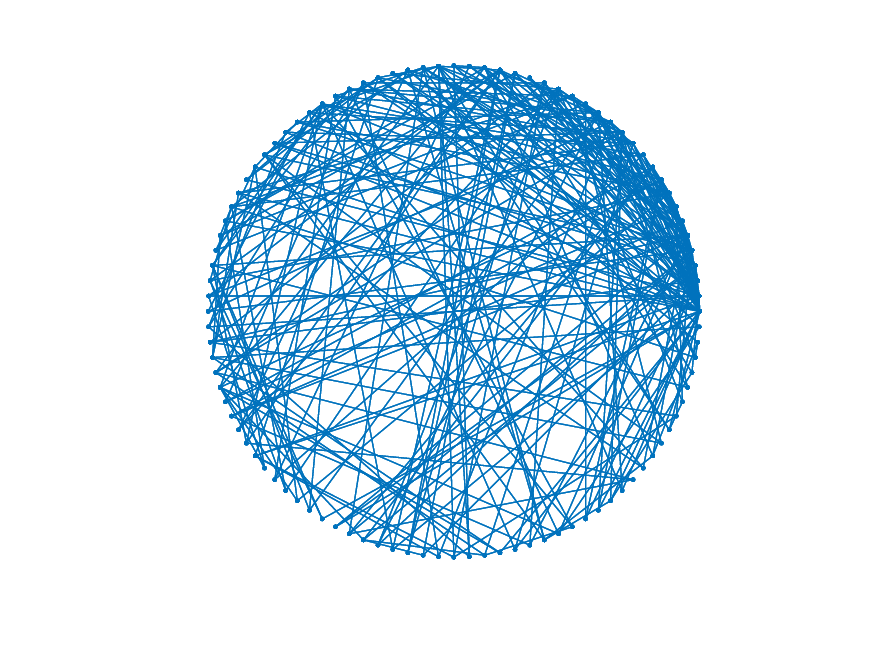}
	\caption{Graph with $N=100$ nodes representing the microgrid model, randomly generated by using a preferential attachment mechanism. }
	\label{fig:network}
\end{figure}

\begin{table}
	\centering
	\begin{tabular}{lll}
		\thickhline \\[-0.2cm]
		Parameter & Symbol & Value \\ \\[-0.25cm]
		\hline \\ [-0.2cm]
		Internal Resistance & $R_i$ & $0.2 \pm 0.05 \; \Omega$ \\
		Internal Inductance & $L_i$ & $2.5 \pm 1.0 \; mF$ \\
		Internal Capacitance & $C_i$ & $0.01 \pm 0.001 \; H$ \\
		Load Conductance & $Y_i$ & $0.02 \pm 0.001 \; S$ \\
		Line Resistance & $R_{ij}$ & $0.05 \; \Omega$ \\ \\ [-0.25cm]
		\thickhline
	\end{tabular}
	\caption{Electrical parameters used in the numerical experiments. The values for each DGU are drawn from uniform distributions indicated by the corresponding intervals.}
	\label{Tab:electricalparameters}
\end{table}

Consider the model of a DGU depicted in Figure \ref{Fig:Buck}. The input voltage $V_{in}$ represents a DC renewable source, \textit{e.g.}, a solar panel, and is designed to sustain an output voltage $V$ over a local load $I_L$, for instance a household or an electric vehicle. The $RLC$ circuit in between, with $R,L,C >0$, is the averaged model of a Buck converter, which steps down the input voltage so that $V \le V_{in}$ at all times. Note that, from a physical point of view, the DGU is regulated by acting on the duty cycle of the Buck converter which, in turn, modulates the input voltage $V_{in}$. For simplicity, we consider here manipulating the input voltage directly, disregarding the duty cycle saturation. The internal current is denoted with $I$, while the external current flowing from the neighbouring DGUs is $I_G$. We assume constant impedance loads $I_L = YV$, with conductance $Y>0$. The {\modif individual} dynamics of the \textit{i-th} DGU can then be represented by
\begin{subequations}\label{BuckDynamics}
	\begin{align}
		\dot{x}_i & = A_ix_i + B_iv_i + G_iu_i \\
		y_i & = C_ix_i \,,
	\end{align}
\end{subequations}
where $x_i = \begin{bmatrix} V_i & I_i \end{bmatrix}^\top$ is the state, $v_i = V_{in,i}$ is the local control input, $u_i=I_{G,i}$ is the current injected from the microgrid, and the matrices
\begin{align*}
	A_i & = \begin{bmatrix} -Y_i/C_i & 1/C_i \\ -1/L_i & -R_i/L_i  \end{bmatrix} \,, \quad B_i = \begin{bmatrix} 0 \\ 1/L_i \end{bmatrix} \,, \\
	G_i & = \begin{bmatrix} 1/C_i \\ 0 \end{bmatrix} \,, \quad C_i = \begin{bmatrix} 1 & 0 \end{bmatrix}\,,
\end{align*} 
depend on the electrical parameters of the converter. The subsystems are interconnected to each other via resistive lines, 
\begin{equation} \label{interconnectionlines}
	u_i = \sum_{j\in\mathcal{N}_i} \frac{1}{R_{ij}} (y_j-y_i) \,,
\end{equation}
where $R_{ij}>0$ denotes the resistance of the line connecting DGU $i$ with DGU $j$. Interconnection \eqref{interconnectionlines} is a Laplacian coupling (see \eqref{coupling}), hence can be represented as $u=-Ly$, where $u=\begin{bmatrix} u_1 & \cdots & u_N
\end{bmatrix}$ and $y=\begin{bmatrix} y_1 & \cdots & y_N
\end{bmatrix}$. By considering local feedback controllers $v_i = K_i x_i$, the overall network dynamics becomes
\begin{equation} \label{microgridglobal}
	\dot{x} = (A+BK-GLC)x \,,
\end{equation}
where $x=\begin{bmatrix} x_1 & \cdots & x_N \end{bmatrix}$, $A=\text{diag}(A_1,\ldots,A_N)$, $B=\text{diag}(B_1,\ldots,B_N)$, $K=\text{diag}(K_1,\ldots,K_N)$, $G=\text{diag}(G_1,\ldots,G_N)$, and $C=\text{diag}(C_1,\ldots,C_N)$. 

We consider a network of $N=100$ DGUs \eqref{BuckDynamics} interconnected via resistive lines  \eqref{interconnectionlines}. The interconnection topology is depicted in Fig. \ref{fig:network} and is randomly generated according to the Barabási-Albert model \cite{BarbasiAlbertModel} to capture the fact that power distribution networks are well represented by graphs constructed via preferential attachments; as a result, a restricted number of nodes (hubs) exhibit significant higher degree then the majority of other nodes. The electrical parameters for each DGU are uniformly selected in the intervals indicated in Table~\ref{Tab:electricalparameters}, centred at values similar to those used in the microgrid control literature, \textit{e.g.}, \cite{MicheleTCST}.

\begin{figure}
	\centering
	\begin{subfigure}{0.49\columnwidth}
		\includegraphics[width=\columnwidth,trim=1cm 0 2.1cm 0]{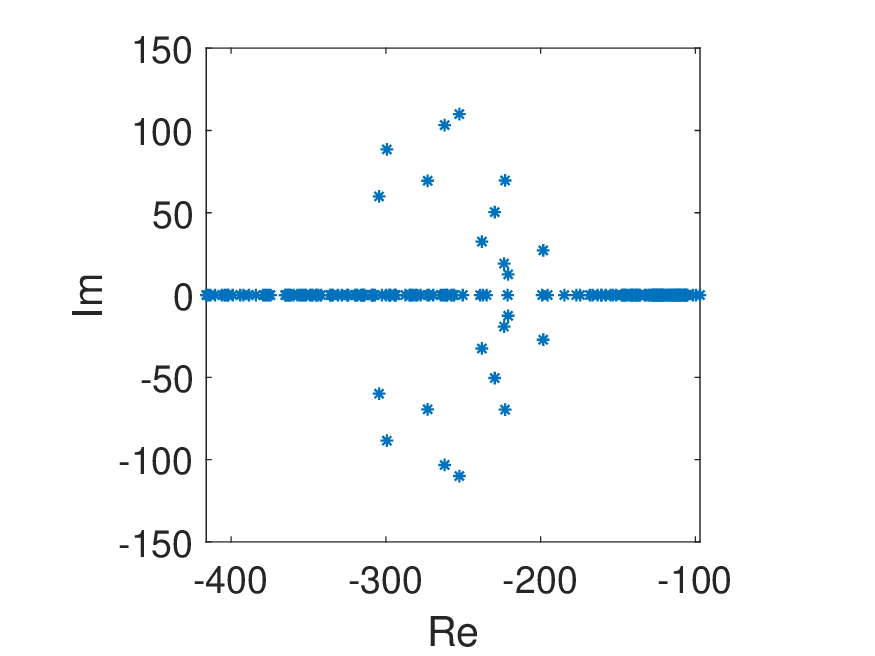}
		\caption{}
		\label{fig:eigCT}
	\end{subfigure}
	\begin{subfigure}{0.49\columnwidth}
		\includegraphics[width=\columnwidth,trim=1cm 0 2.1cm 0]{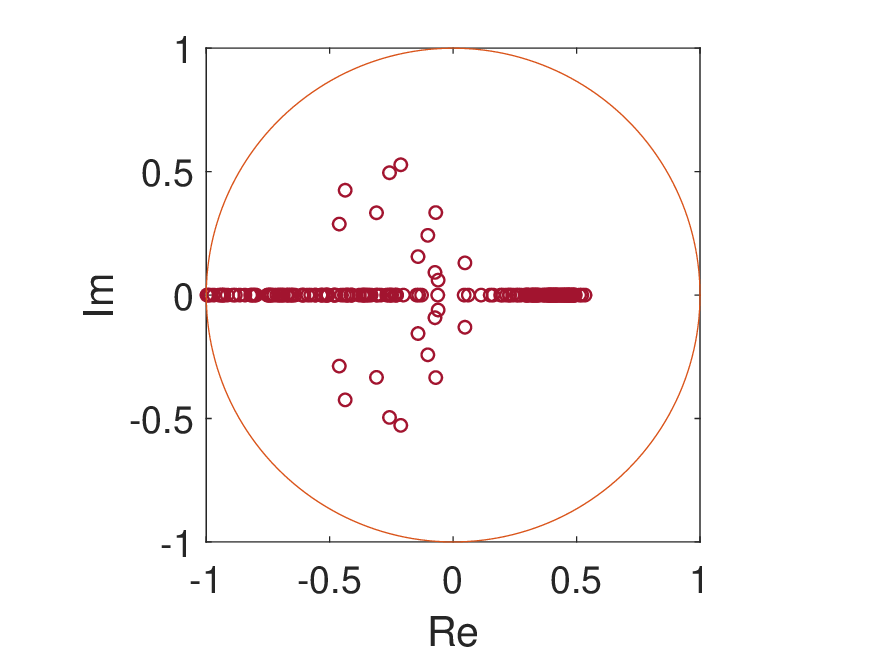}
		\caption{}
		\label{fig:eigDT}
	\end{subfigure}
	\begin{subfigure}{0.65\columnwidth}
		\hspace{-0.5cm}\includegraphics[width=\columnwidth,trim=1cm 0 2.1cm 0]{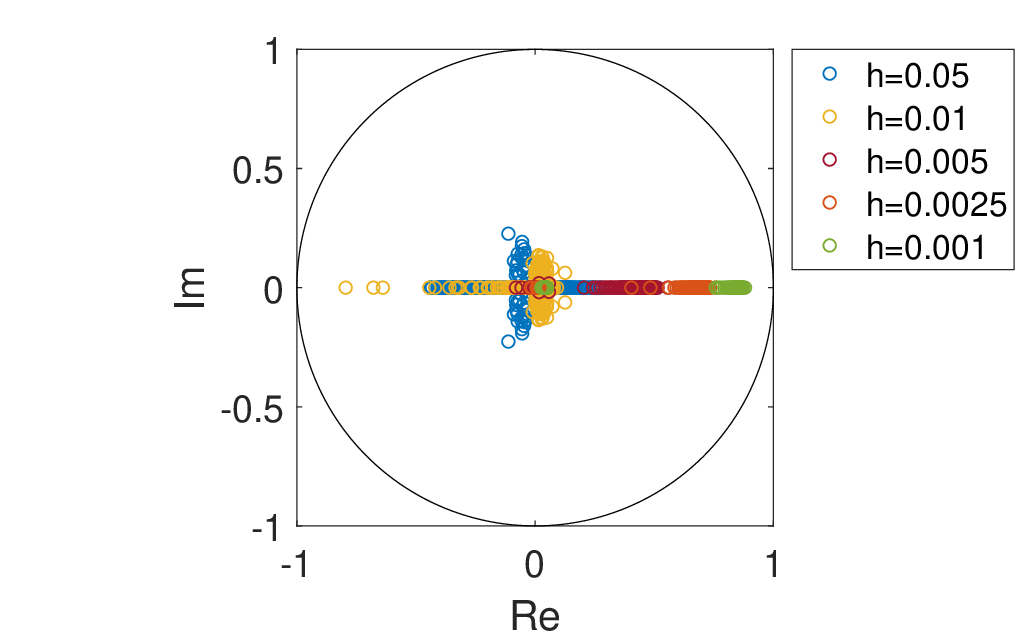} 
		\caption{}
		\label{fig:eigDTT}
	\end{subfigure}
	\caption{Eigenvalues distribution of (a) the CT system \eqref{microgridglobal}, (b) the corresponding DT system discretized with forward Euler and stepsize $h=0.005$, and (c) the DT system equipped with our dissipative controllers and for different discretization stepsizes $h$.}
	\label{fig:eig_distr}
\end{figure}

\iffalse
 Indeed, let us define the local PD storage function $E_i(x_i) = \tfrac{1}{2}C_iV_i^2 + \tfrac{1}{2}L_iI_i^2$, which represents the total energy stored in the converter. Then we compute
%
\begin{align*}
	\dot{E}_i(x_i) & = C_iV_i\dot{V}_i + L_iI_i\dot{I}_i \\
	& = -Y_i V_i^2 - (R_i+K_{I,i})I_i^2 + y_iu_i \\
	& \le y_iu_i \,,
\end{align*}
concluding that the subsystems are passive. Therefore, one can invoke Fact \ref{fact:chopra} to infer that the network \eqref{microgridglobal} is stable. By taking a global Lyapunov function $E(x)$ as the sum of the local storage functions $E_i(x_i)$, we can write
%
\begin{equation*}
	\dot{E}(x) = -YV^2 - (R+K_I)I^2 + y^\top u\,,
\end{equation*}
where $Y=\text{diag}(Y_1,\dots,Y_N)$, $R=\text{diag}(R_1,\dots,R_N)$, and $K_I=\text{diag}(K_{I,1},\dots,K_{I,N})$. By \cite{ChopraPassivity} for passive CT systems interconnected in a Laplacian fashion, $y^\top u \le 0$, therefore $\dot{E}(x)<0$ for all $x \ne 0$ and we conclude that \eqref{microgridglobal} is asymptotically stable.
\fi

\subsection{Discretization and Dissipative Control}

If the local controllers $K_i=\begin{bmatrix} K_{V,i} & K_{I,i} \end{bmatrix}$ are set to be proportional to the internal current, \textit{i.e.}, $K_{V,i}=0$ and $K_{I,i}<0$, then subsystems \eqref{BuckDynamics} become passive \cite{PulkitAutomatica}. Then, by Fact \ref{fact:chopra} and CT stability arguments, one can show that \eqref{microgridglobal} is asymptotically stable. To confirm this, in Fig. \ref{fig:eigCT} we show the eigenvalues distribution of \eqref{microgridglobal} when the local controllers are drawn from a uniform distribution $K_{I,i}= -1 \pm 0.1$.

Unfortunately, as already discussed, properties like passivity or stability can be lost under discretization \cite{BrogliatoDissipativity,StramigioliDiscretization, LailaDissipationSampling, CostaPassivityPreservation, OishiPassivityDegradation}, and most of the available techniques compromise the sparsity pattern of the matrices involved \cite{FarinaDiscretization, SouzaDiscretization}. For instance, forward Euler preserves stability of \eqref{microgridglobal} if and only if the stepsize $h< h^* \coloneq \min_{i}\tfrac{-2\text{Re}(\lambda_i)}{| \lambda_i |^{2}}$, where the $\lambda_i$'s are the eigenvalues of the CT system \cite{FarinaDiscretization}. In this example $h^* \approx 0.005s$, as confirmed by Fig. \ref{fig:eigDT}. In case of nonlinear systems, fast dynamics, or different discretization methods, a suitable stepsize might be extremely small or difficult to estimate. For these reasons, we want to implement our distributed control methods directly on the DT model. 

\begin{figure}
	\centering
	\includegraphics[width=0.48\columnwidth,trim=1.3cm 0 1.7cm 0]{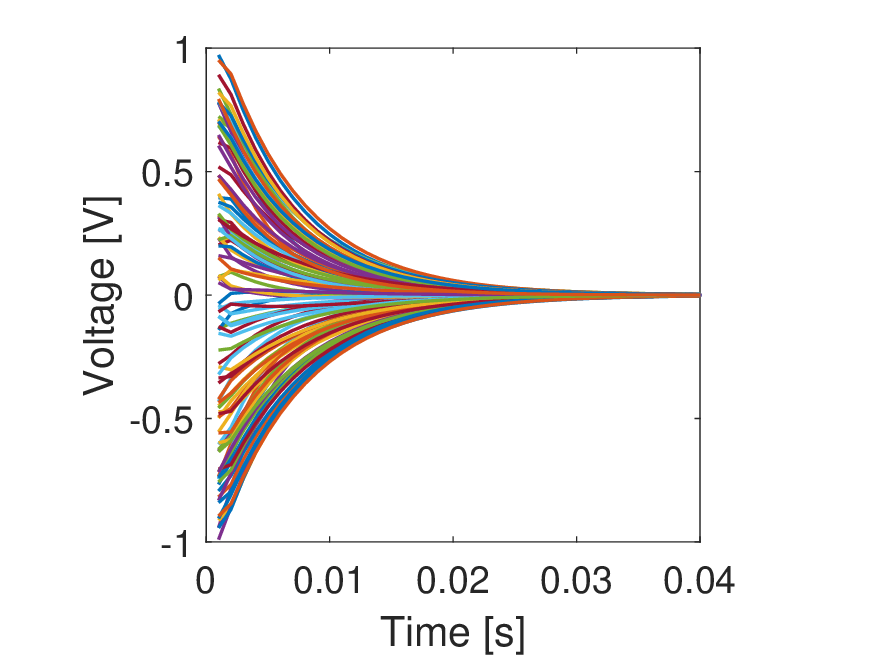} \hspace{0.1cm}
	\includegraphics[width=0.48\columnwidth,trim=1.3cm 0 1.7cm 0]{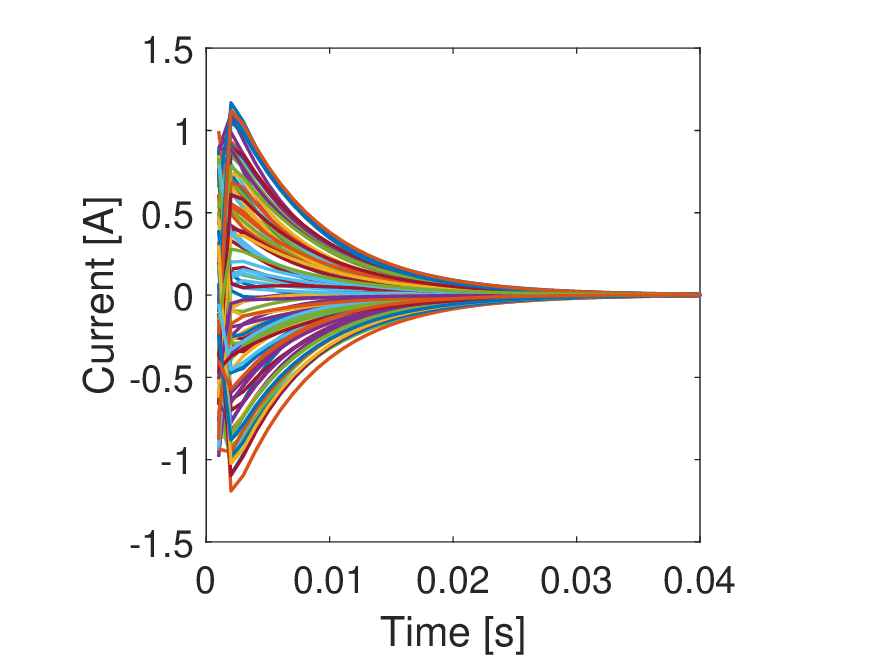}
	\includegraphics[width=0.48\columnwidth,trim=1.3cm 0 1.7cm 0]{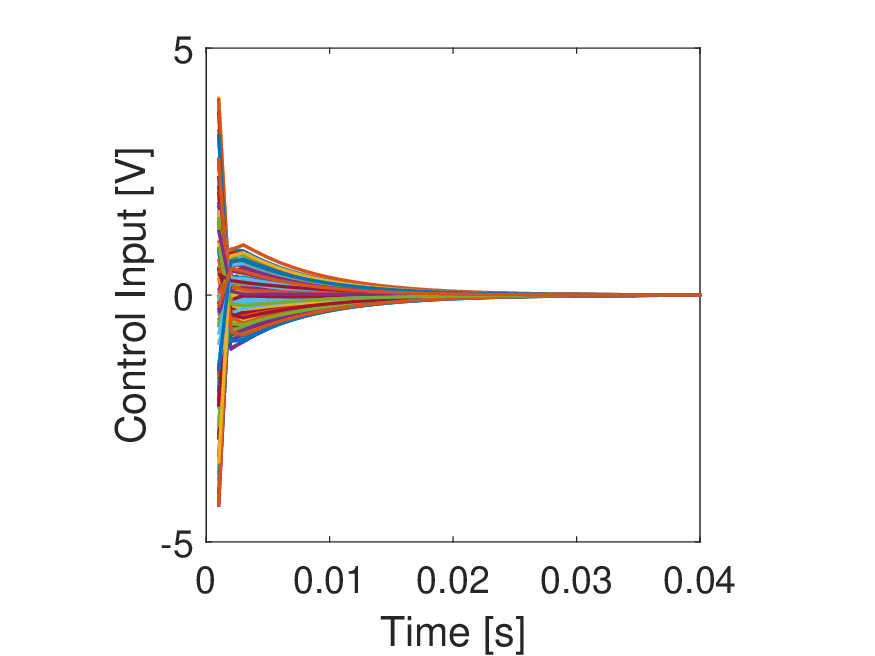}
	\hspace{0.1cm}
	\includegraphics[width=0.48\columnwidth,trim=1.3cm 0 1.7cm 0]{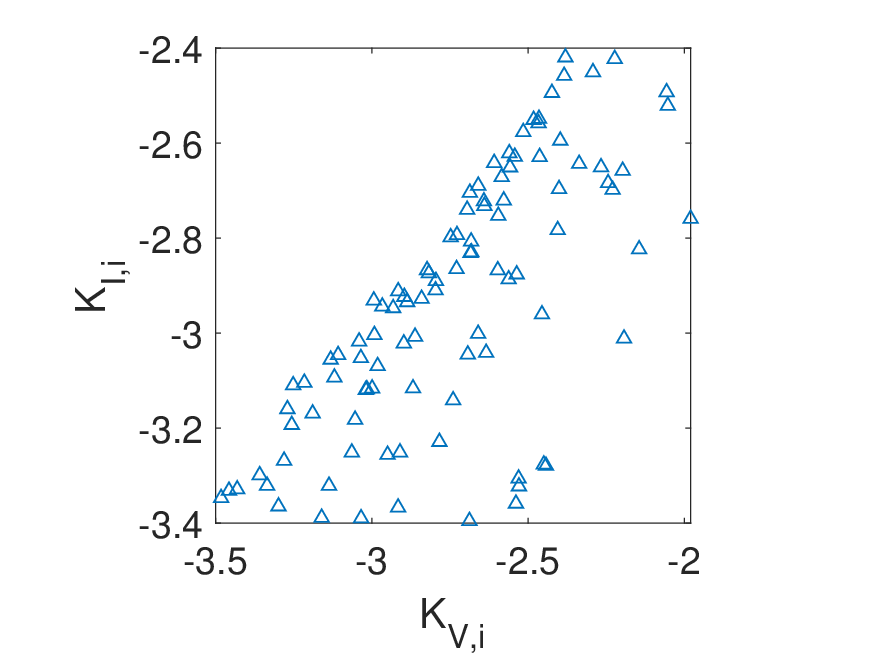}
	\caption{The local dissipative controllers steer the DT network trajectories back to the origin after a perturbation of the output voltages drawn uniformly from the interval $[-1V, 1V]$. Note that for simplicity we consider the problem of stabilization to the origin but, in general, stabilization to a desired non-zero state is achieved by a suitable change of variables.}
	\label{fig:simulation}
\end{figure}

\begin{figure}
	\centering
	\includegraphics[width=0.4\textwidth]{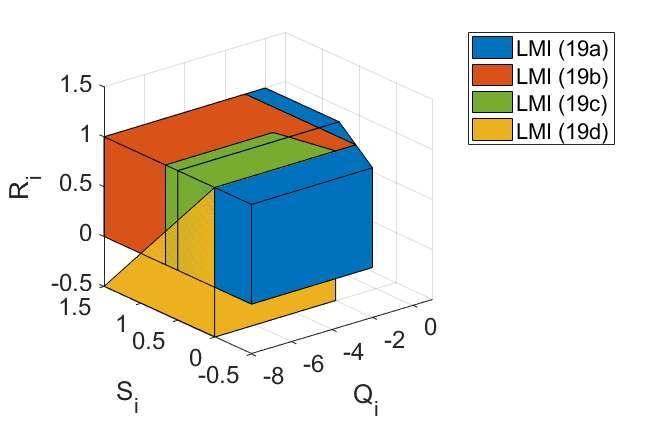}
	\caption{A portion of the feasible region of the four LMIs in Theorem \ref{TheoremBounds} for a node with degree $d_i=0.5$.}
	\label{fig:LMIplot}
\end{figure}

Instead of designing controllers for the CT system, we first discretize each {\modif individual} system \eqref{BuckDynamics} by sampling and holding the coupling variables $u_i$ with different stepsizes, as shown in Fig.~\ref{fig:eigDTT}. Then, each DT system solves LMI \eqref{LMIdual} to compute a dissipative controller and any of the LMIs \eqref{LMIlocaldual} to guarantee stability of the interconnection in the dual variables $(P_i,Z_i,\mathscr{Q}_i,\mathscr{S}_i,\mathscr{R}_i)$. In case of feasibility, the local dissipative controllers are given by $K_i=Z_iP_i^{-1}$ and they guarantee asymptotic stability of the DT network. For this specific microgrid example, we observe that \eqref{Thm2:first} and \eqref{Thm2:second} are feasible for a wide choice of parameters $\alpha$ and $\mathcal{S}$, while \eqref{Thm2:third} and \eqref{Thm2:fourth} are always feasible. We depict in Fig.~\ref{fig:eigDTT} the eigenvalue distribution of the resulting DT system equipped with the local dissipative controllers computed via \eqref{LMIdual} and \eqref{Thm2:first} with the arbitrary choice of $\alpha=1$. In Fig.~\ref{fig:simulation} we report the values and performance of these controllers when $h=0.001$. We mention that the problem of selecting specific controllers within a set of dissipative ones is still open. Although they all stabilize the network, they might have different performance in terms of transients, frequency behaviour or robustness. Some heuristic approaches include maximising the dissipation rate or minimizing the distance between $P_i$ and the solution to the linear quadratic regulator problem \cite{AhmedPassivity}.

At last, we wish to compare the four local LMIs in Theorem~\ref{TheoremBounds}, that we recall to be equivalent to the dual LMIs in Corollary \ref{cor:LMI} up to a change of variables. For single-input single-output systems ($m=1$), like in this example, the variables $Q_i$, $S_i$, and $R_i$ are scalar and we can conveniently represent the sets in 3D, as shown in Fig. \ref{fig:LMIplot}. Note that \eqref{Thm1:first} and \eqref{Thm1:second} are plotted by treating the global parameters $\alpha$ and $\mathcal{S}$ as decision variables (in a higher-dimensional space), and then projecting the set in 3D. In this way we can appreciate the set geometry for a continuous range of $\alpha$ and $\mathcal{S}$ but, in practice, they have to be fixed in advance and be the same for each system in the network. For a fixed choice of $\alpha$ or $\mathcal{S}$, the corresponding LMIs \eqref{Thm1:first} and \eqref{Thm1:second} are found by intersecting the depicted volumes with the planes $S_i= \tfrac{1}{2}\alpha I$ and $S_i=\mathcal{S}$, respectively. As an example, the LMI \eqref{Thm1:first} with $\alpha=1$ is depicted in Fig.~\ref{fig:LMIplot} as the intersection of the blue region with the plane $S_i=0.5$.

An important observation is that, even in the most simple scenario of scalar variables, the intersection geometry of the four sets is not trivial and there is no dominant condition. Although \eqref{Thm1:first} and \eqref{Thm1:second} seem to subsume \eqref{Thm1:third}, once $\alpha$ or $\mathcal{S}$ are fixed they reduce to lower-dimensional sets; hence, their performance depends on the ability to select appropriate global parameters. Moreover, even if \eqref{Thm1:second} is contained in \eqref{Thm1:first} in this one-dimensional example, this is not true anymore for $m>1$ since $\mathcal{S}\in\mathbb{R}^{m\times m}$ but $\alpha\in\mathbb{R}$.

%%%%%%%%%%%%%%%%%%%%%%%%%%%%%%%%%%%%%%%%%%%%%%%%%%%%%%%%%%%%%%%%%%%%%%%%

\section{Conclusion}

We shed light on the problem of stability of interconnected DT {\modif dissipative} systems without feedthrough. We now understand virtual outputs as a special case of $(Q,S,R)$-dissipativity, and realize that $R$ must be PSD but not zero. Moreover, we derived the stability condition \eqref{LMI global} for nonlinear interconnected $(Q,S,R)$-dissipative systems, and we can use LMIs \eqref{LMIprimal} and \eqref{LMIlocalprimal} (or \eqref{LMIdual} and \eqref{LMIlocaldual} in the dual variables) to design local dissipative controllers that guarantee stability of the {\modif entire} network.

Numerous exciting research directions remain open, such as how to compute dissipative controllers for nonlinear systems with inequalities that are linear in the supply rate, how to deal with stochastic systems, unknown dynamics, time-varying or uncertain topologies, deriving more general sufficient conditions \eqref{LMIlocalprimal} {\modif or relaxing the Laplacian assumption in Theorem~\ref{TheoremBounds}.} {\modif Finally, we wish to extend the results of this paper beyond the class of quadratic supply rate functions.}

\section*{Acknowledgements}

We would like to thank J. Eising for the stimulating discussions on dissipativity theory.

%%%%%%%%%%%%%%%%%%%%%%%%%%%%%%%%%%%%%%%%%%%%%%%%%%%%%%%%%%%%%%%%%%%%%%%%

\bibliographystyle{ieeetr}
\bibliography{Bibliography}

%%%%%%%%%%%%%%%%%%%%%%%%%%%%%%%%%%%%%%%%%%%%%%%%%%%%%%%%%%%%%%%%%%%%%%%%

\begin{appendices}
	
\section{Matrix Theory} \label{appendix:matrix}

 We introduce a set of facts regarding PSD matrices that are extensively used throughout the manuscript. 

\begin{fact}\label{Fact:congruence} \cite[Obs. 7.1.8]{HornJohnsonMatrix}
	Consider $A\in\mathbb{R}^{n\times n}$ and $B\in\mathbb{R}^{n\times m}$. If $A \succeq 0$, then $B\' AB \succeq 0$. If $A\succ 0$, then $B\' AB \succ 0$ if and only if $\text{Rank}(B) = m$.
\end{fact}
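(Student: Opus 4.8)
The plan is to reduce both assertions to the single congruence identity
$x\' (B\' A B) x = (Bx)\' A (Bx)$, valid for every $x \in \mathbb{R}^m$. This identity converts every statement about the quadratic form of the $m \times m$ matrix $B\' A B$ into the corresponding statement about the quadratic form of $A$ evaluated at the image vector $Bx \in \mathbb{R}^n$, after which the hypotheses on $A$ can be applied directly.

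First I would dispatch the semidefinite case. Fixing an arbitrary $x \in \mathbb{R}^m$ and setting $y = Bx$, the identity gives $x\' (B\' A B) x = y\' A y \ge 0$ whenever $A \succeq 0$. Since $x$ was arbitrary, the quadratic form of $B\' A B$ is nonnegative everywhere, so $B\' A B \succeq 0$.

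Next, under the stronger hypothesis $A \succ 0$, I would prove the equivalence in two steps. For the direction $\text{Rank}(B) = m \Rightarrow B\' A B \succ 0$, full column rank means the map $x \mapsto Bx$ is injective, so any nonzero $x$ yields $Bx \ne 0$; strict definiteness of $A$ then gives $x\' (B\' A B) x = (Bx)\' A (Bx) > 0$, establishing $B\' A B \succ 0$. For the converse I would argue by contraposition: if $\text{Rank}(B) < m$, the columns of $B$ are linearly dependent, so there is a nonzero $x$ with $Bx = 0$, whence $x\' (B\' A B) x = 0$ and $B\' A B$ fails to be positive definite.

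I do not anticipate any genuine obstacle here; the whole argument hinges on the congruence identity together with the elementary fact that $\text{Rank}(B) = m$ for an $n \times m$ matrix is exactly the condition that $B$ have trivial kernel. The only points warranting a moment of care are to keep track that $B$ is regarded as an $n \times m$ matrix (so that $\text{Rank}(B) = m$ is the full-column-rank condition and not full row rank), and that $Bx$ lives in $\mathbb{R}^n$, matching the domain on which $A \succ 0$ or $A \succeq 0$ is assumed.
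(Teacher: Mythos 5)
Your proof is correct. The paper itself gives no proof of this fact---it is quoted verbatim with a citation to Horn and Johnson---and your argument, reducing everything to the congruence identity $x^\top(B^\top A B)x = (Bx)^\top A (Bx)$ and the observation that $\text{Rank}(B)=m$ is exactly the trivial-kernel (full column rank) condition, is the standard proof of the cited observation; nothing is missing.
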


\iffalse
\begin{fact}\label{Fact:changesign}
	Given the real square matrices $A$, $B$ and $C$, the following relation holds,
	\begin{equation*}
		\begin{bmatrix}
			A & B \\ \star & C
		\end{bmatrix} \succeq 0 \; (\mbox{resp.} \succ 0) \iff \begin{bmatrix}
			A & -B \\ \star & C
		\end{bmatrix} \succeq 0 \; (\mbox{resp.} \succ 0)\,,
	\end{equation*}
	where $\star$ denotes symmetry.
\end{fact}

\begin{proof}
	Assume that $\begin{bsmallmatrix} A & B \\ \star & C 
	\end{bsmallmatrix} \succeq 0$. Then, for all vectors $x$ and $y$ of consistent dimension, it holds
	%
	\begin{align*}
		\begin{bmatrix}
			x \\ y
		\end{bmatrix}^\top \begin{bmatrix}
			A & -B \\ \star & C
		\end{bmatrix} \begin{bmatrix}
			x \\ y
		\end{bmatrix} & = x^\top Ax - y^\top B^\top x - x^\top B y + y^\top C y \\
		& = \begin{bmatrix}
			x \\ -y
		\end{bmatrix}^\top \begin{bmatrix}
			A & B \\ \star & C
		\end{bmatrix} \begin{bmatrix}
			x \\ -y
		\end{bmatrix} \succeq 0 \,,
	\end{align*}
	and vice versa the reasoning holds starting from the assumption that $\begin{bsmallmatrix} A & -B \\ \star & C 
	\end{bsmallmatrix} \succeq 0$. A similar discussion is verified for the PD case.
\end{proof}
\fi

\vspace{0.1cm}

\begin{fact}[Dualization Lemma]\label{Fact:Duality} \cite[Cor. 4.10]{SchererLMIs}
	Consider $A\in\mathbb{R}^{n \times n}$, $B\in\mathbb{R}^{n \times p}$, $C\in\mathbb{R}^{p\times p}$, and $M\in\mathbb{R}^{p \times n}$. If $\begin{bsmallmatrix} A & B \\ \star & C \end{bsmallmatrix}$ is invertible and its inverse is $\begin{bsmallmatrix} \mathscr{A} & \mathscr{B} \\ \star & \mathscr{C} \end{bsmallmatrix}$, then
	\begin{align*}
		\begin{bmatrix}
			I \\ M
		\end{bmatrix}^\top \begin{bmatrix} A & B \\ \star & C \end{bmatrix} \begin{bmatrix}
			I \\ M
		\end{bmatrix} \prec 0 \,, \quad C \succeq 0 \quad \iff \\
		\begin{bmatrix}
			-M^\top \\ I
		\end{bmatrix}^\top \begin{bmatrix} \mathscr{A} & \mathscr{B} \\ \star & \mathscr{C} \end{bmatrix} \begin{bmatrix}
			-M^\top \\ I
		\end{bmatrix} \succ 0 \,, \quad \mathscr{A} \preceq 0\,.
	\end{align*}
	Moreover, if $\text{In}( \begin{bsmallmatrix} A & B \\ \star & C \end{bsmallmatrix}) = (p,0,n)$, then the previous relation holds with non-strict inequalities \cite[Lemma 2]{HenkDissipativity}.
\end{fact}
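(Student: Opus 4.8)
The plan is to prove the equivalence through an inertia argument combined with the elementary duality between quadratic forms on mutually orthogonal subspaces. Write $\Xi := \begin{bsmallmatrix} A & B \\ \star & C \end{bsmallmatrix}$ and $\Xi^{-1} = \begin{bsmallmatrix} \mathscr{A} & \mathscr{B} \\ \star & \mathscr{C} \end{bsmallmatrix}$, and set $U := \begin{bsmallmatrix} I \\ M \end{bsmallmatrix}$ and $U_\perp := \begin{bsmallmatrix} -M^\top \\ I \end{bsmallmatrix}$. The single observation that drives everything is $U^\top U_\perp = -M^\top + M^\top = 0$: since $U$ has $n$ independent columns, $U_\perp$ has $p$ independent columns, and $n+p$ is the full dimension, the ranges $\operatorname{Im}U$ and $\operatorname{Im}U_\perp$ are orthogonal complements of one another. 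Thus the left-hand inequality asserts that $\Xi$ is negative definite on $\operatorname{Im}U$, while the right-hand inequality asserts that $\Xi^{-1}$ is positive definite on $(\operatorname{Im}U)^\perp=\operatorname{Im}U_\perp$. The two side conditions $C\succeq0$ and $\mathscr{A}\preceq0$ are, respectively, the restrictions of $\Xi$ and $\Xi^{-1}$ to the coordinate subspaces $\operatorname{Im}\begin{bsmallmatrix}0\\I\end{bsmallmatrix}$ and $\operatorname{Im}\begin{bsmallmatrix}I\\0\end{bsmallmatrix}$.

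First I would fix the inertia. From $U^\top\Xi U\prec0$ on an $n$-dimensional subspace one obtains at least $n$ negative eigenvalues, while $C\succeq0$ on a $p$-dimensional subspace forbids more than $n$ of them; since $\Xi$ is nonsingular this pins down $\text{In}(\Xi)=(n,0,p)$, and inversion preserves inertia, so $\text{In}(\Xi^{-1})=(n,0,p)$ as well. The core step is then the following duality claim: if $\text{In}(\Xi)=(n,0,p)$ and $\Xi$ is negative definite on a subspace $\mathcal U$ of the maximal dimension $n$, then $\Xi^{-1}$ is positive definite on $\mathcal U^\perp$. I would prove this by passing to the $\Xi$-orthogonal complement $\mathcal U^{\perp_\Xi}:=\{\,x:\Xi x\perp\mathcal U\,\}=\Xi^{-1}(\mathcal U^\perp)$, which has dimension $p$ and meets $\mathcal U$ trivially because $\Xi|_{\mathcal U}\prec0$; hence $\mathbb{R}^{n+p}=\mathcal U\oplus\mathcal U^{\perp_\Xi}$ is a $\Xi$-orthogonal splitting, and since inertias add over such a splitting, $\Xi$ must be positive definite on $\mathcal U^{\perp_\Xi}$. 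Writing any nonzero $y\in\mathcal U^\perp$ as $y=\Xi x$ with $0\neq x\in\mathcal U^{\perp_\Xi}$ gives $y^\top\Xi^{-1}y=x^\top\Xi x>0$, which is exactly $U_\perp^\top\Xi^{-1}U_\perp\succ0$.

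It remains to transfer the side conditions, and this is where I expect the only real friction. The hypothesis $C\succeq0$ is allowed to be \emph{singular}, so the clean $\Xi$-orthogonal splitting used above, which relied on definiteness, is not available for the subspace $\operatorname{Im}\begin{bsmallmatrix}0\\I\end{bsmallmatrix}$. My plan is to obtain $\mathscr{A}\preceq0$ by a perturbation: replace $C$ by $C+\varepsilon I\succ0$, note that $U^\top\Xi U\prec0$ is strict and hence survives for small $\varepsilon>0$, apply the definite duality claim to $-\Xi_\varepsilon$ (whose bottom block is now negative definite) to conclude $\mathscr{A}_\varepsilon\prec0$, and let $\varepsilon\to0^+$; nonsingularity makes $\Xi_\varepsilon^{-1}\to\Xi^{-1}$ continuously, so the non-strict $\mathscr{A}\preceq0$ passes to the limit. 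The reverse implication is then obtained by running the identical argument with $\Xi$ and $\Xi^{-1}$ interchanged and $\mathcal U,\mathcal U^\perp$ swapped, using $\text{In}(\Xi)=\text{In}(\Xi^{-1})$. Finally, the ``Moreover'' case $\text{In}(\Xi)=(p,0,n)$ is the non-strict analogue: the same inertia bookkeeping and orthogonal-complement duality go through with $\prec,\succ$ replaced by $\preceq,\succeq$, which is precisely the statement recorded in \cite[Lemma 2]{HenkDissipativity}.
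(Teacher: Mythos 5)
The paper never proves this statement: it is imported as a Fact, with the strict case cited from \cite{SchererLMIs} and the non-strict case from \cite{HenkDissipativity}, so there is no in-paper argument to compare against. Judged on its own, your proof of the strict equivalence is correct, and it is essentially the classical inertia argument underlying Scherer's proof. The chain is sound: $U^\top U_\perp=0$ plus a rank count gives $\operatorname{Im}U_\perp=(\operatorname{Im}U)^\perp$; the two hypotheses plus nonsingularity pin $\text{In}(\Xi)=(n,0,p)$; the $\Xi$-orthogonal complement $\mathcal U^{\perp_\Xi}=\Xi^{-1}(\mathcal U^\perp)$ meets $\mathcal U$ trivially \emph{because} $\Xi|_{\mathcal U}\prec 0$; inertia is additive over the resulting splitting; and the substitution $y=\Xi x$ transfers positivity to $\mathcal U^\perp$. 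The $\varepsilon$-perturbation for the semidefinite side condition $\mathscr A\preceq 0$ also works, and can even be streamlined: once $C+\varepsilon I\succ 0$ and $U^\top\Xi_\varepsilon U\prec 0$ hold, the same inertia count already forces $\text{In}(\Xi_\varepsilon)=(n,0,p)$, so invertibility of $\Xi_\varepsilon$ is automatic and $\varepsilon$ need only be small enough to keep $U^\top\Xi U+\varepsilon M^\top M\prec 0$.

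The one genuine gap is the closing claim that the ``Moreover'' case goes through with $\prec,\succ$ simply replaced by $\preceq,\succeq$. It does not, as written: your splitting argument needs $\mathcal U\cap\mathcal U^{\perp_\Xi}=\{0\}$, which you obtained precisely from strict definiteness of $\Xi$ on $\mathcal U$; under $U^\top\Xi U\preceq 0$ the intersection can contain isotropic vectors, the direct sum $\mathbb{R}^{n+p}=\mathcal U\oplus\mathcal U^{\perp_\Xi}$ collapses, and inertia additivity is unavailable. The repair is short but genuinely different: use the bound that any subspace on which $\Xi$ is negative semidefinite has dimension at most $\rho_-+\rho_0$. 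If some $y\in\mathcal U^\perp$ had $y^\top\Xi^{-1}y<0$, set $x=\Xi^{-1}y$; for $u\in\mathcal U$ one has $u^\top\Xi x=u^\top y=0$, so $(u+\alpha x)^\top\Xi(u+\alpha x)=u^\top\Xi u+\alpha^2\,y^\top\Xi^{-1}y\le 0$. If $x\notin\mathcal U$ this exhibits an $(n{+}1)$-dimensional negative semidefinite subspace, contradicting $\rho_-+\rho_0=n$; if $x\in\mathcal U$ then $y^\top\Xi^{-1}y=x^\top\Xi x=x^\top y=0$, again a contradiction. The same lemma applied to $-\Xi$ on the coordinate subspace $\operatorname{Im}\begin{bsmallmatrix}0\\ I\end{bsmallmatrix}$ handles the non-strict side condition $\mathscr A\preceq 0$. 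Finally, be careful with the inertia tuple you inherit verbatim from the statement: your own bookkeeping yields $\rho_-(\Xi)=n$, i.e.\ $\text{In}(\Xi)=(n,0,p)$ in the paper's $(\rho_-,\rho_0,\rho_+)$ convention, and this also matches how the Fact is invoked in the proof of Lemma~3, where the test matrix has $n+m$ columns and $\text{In}(\Psi)=(n+m,0,n+p)$. Taking the printed $(p,0,n)$ at face value, your dimension bound would read $\rho_-+\rho_0=p$ and the argument would not close unless $n\le p$; so either treat the printed tuple as an ordering quirk of the cited source or flag it, but run your proof with $\rho_-=n$.
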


\begin{fact}\label{Fact:psd}
	Given the real square matrices $A$, $B$ and $C$, the following relations hold,
	\begin{equation*}
		\begin{aligned}
			A \succeq B \,, \;\, C \succeq B \succeq 0 \implies \begin{bmatrix}
				A & B \\ \star & C
			\end{bmatrix} \succeq 0 \,, \\
			A \succ B \,, \;\, C \succ B \succeq 0 \implies \begin{bmatrix}
				A & B \\ \star & C
			\end{bmatrix} \succ 0 \,.
		\end{aligned}
	\end{equation*}
\end{fact}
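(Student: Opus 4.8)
The plan is to prove both implications directly from the quadratic-form characterization of positive (semi)definiteness, evaluating the block matrix on an arbitrary stacked vector and rewriting the result so that the three hypotheses appear as separate nonnegative terms. First I would observe that the statement is well posed with symmetric data: the $\star$ denotes the symmetric counterpart $B^\top$, so $A$ and $C$ are symmetric, and since $A \succeq B$ and $C \succeq B$ presuppose that $A-B$ and $C-B$ are symmetric PSD matrices, $B$ itself must be symmetric. Hence the block matrix $\begin{bsmallmatrix} A & B \\ \star & C \end{bsmallmatrix}$ is genuinely symmetric and Fact~\ref{Fact:congruence}-style reasoning on its quadratic form applies.

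The crux is the following completion. Expanding the quadratic form,
\[
	\begin{bmatrix} x \\ y \end{bmatrix}^\top \begin{bmatrix} A & B \\ \star & C \end{bmatrix} \begin{bmatrix} x \\ y \end{bmatrix} = x^\top A x + 2 x^\top B y + y^\top C y,
\]
I would rewrite the right-hand side as
\[
	(x+y)^\top B (x+y) + x^\top (A - B) x + y^\top (C - B) y.
\]
Verifying this is a one-line check: using $B=B^\top$, one has $(x+y)^\top B(x+y) = x^\top B x + 2 x^\top B y + y^\top B y$, and the $\pm x^\top B x$ and $\pm y^\top B y$ terms cancel, returning the original form. This identity reduces both claims to the signs of three independent contributions, and spotting it is the only non-mechanical step.

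For the first implication, each summand is nonnegative: $(x+y)^\top B(x+y)\ge 0$ because $B\succeq 0$, while $x^\top(A-B)x\ge 0$ and $y^\top(C-B)y\ge 0$ because $A\succeq B$ and $C\succeq B$. Thus the form is nonnegative for every $\begin{bsmallmatrix} x \\ y \end{bsmallmatrix}$, which is exactly PSD-ness of the block matrix. For the second implication the same decomposition holds with $A-B\succ 0$ and $C-B\succ 0$. I would fix $\begin{bsmallmatrix} x \\ y \end{bsmallmatrix}\neq 0$, so that $x\neq 0$ or $y\neq 0$; if $x\neq 0$ then $x^\top(A-B)x>0$, and if $y\neq 0$ then $y^\top(C-B)y>0$, while the remaining terms (including $(x+y)^\top B(x+y)\ge 0$) stay nonnegative. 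In either case the form is strictly positive, giving PD-ness.

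I do not anticipate a genuine obstacle; the argument is complete once the completion is identified, after which both cases follow by inspection. A Schur-complement route is available too, but it would force an invertibility assumption on $A$ or $C$ and treat the semidefinite case less cleanly, so the direct decomposition is preferable.
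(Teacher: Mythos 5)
Your proof is correct and is essentially the paper's own argument: your completion identity is exactly the quadratic-form version of the paper's matrix decomposition $\begin{bsmallmatrix} A & B \\ \star & C \end{bsmallmatrix} = \begin{bsmallmatrix} B & B \\ \star & B \end{bsmallmatrix} + \begin{bsmallmatrix} A-B & 0 \\ \star & C-B \end{bsmallmatrix}$, since $(x+y)^\top B (x+y)$ is the quadratic form of $\mathbf{1}\mathbf{1}^\top \otimes B$. The only cosmetic difference is that you verify nonnegativity of the $B$-part directly on vectors, whereas the paper cites the fact that a Kronecker product of PSD matrices is PSD; the conclusions and case handling (PSD plus block-diagonal PD gives PD) coincide.
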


\vspace{0.1cm}

\begin{proof}
	We prove here the first relation, while the second holds analogously. Consider the following decomposition,
	\begin{equation*}
		\begin{bmatrix}
			A & B \\ \star & C
		\end{bmatrix} = \begin{bmatrix}
			B & B \\ \star & B
		\end{bmatrix} + \begin{bmatrix}
			A-B & 0 \\ \star & C-B
		\end{bmatrix} \succeq 0 \,.
	\end{equation*}
	The first matrix in the decomposition can be written as the Kronecker product of two PSD matrices as $\begin{bsmallmatrix}
		B & B \\ \star & B \end{bsmallmatrix} = \mathbf{1} \mathbf{1}^\top \otimes B$, hence it is PSD (see \cite[Cor. 4.2.13]{HornJohnsonMatrixTopics}). The second matrix is PSD since it is block diagonal with PSD blocks. 
\end{proof}

\section{Graph Theory} \label{appendix:graph}

 We introduce some important facts about algebraic graph theory, as well as novel results on Laplacian flows and bounds on the Laplacian matrix pseudoinverse.

\begin{fact}\label{fact:Laplacian}
	\cite[Ch. 6]{BulloNetwork}
	The Laplacian matrix of an undirected graph is symmetric, PSD, row-stochastic (\textit{i.e.}, $\mathcal{L} \mathbf{1} = 0$), its spectrum is real and can be ordered as $0 = \lambda_1 \le \lambda_2 \le \dots \le \lambda_N$. The second eigenvalue $\lambda_2$ is strictly greater than zero if and only if the graph is connected.
\end{fact}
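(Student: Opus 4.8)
The plan is to work directly from the definition $\mathcal{L} = \mathcal{D} - \mathcal{A}$, where $\mathcal{A}$ is the symmetric weight matrix with entries $a_{ij} = a_{ji} > 0$ and $\mathcal{D} = \mbox{diag}(d_1, \ldots, d_N)$ collects the weighted degrees $d_i = \sum_j a_{ij}$. Symmetry of $\mathcal{L}$ is immediate, since $\mathcal{A}$ is symmetric and $\mathcal{D}$ is diagonal. The row-sum property $\mathcal{L}\mathbf{1} = 0$ follows by noting that the $i$-th row of $\mathcal{L}$ sums to $d_i - \sum_{j \ne i} a_{ij} = 0$; this already exhibits $\mathbf{1}$ as an eigenvector associated with the eigenvalue $0$. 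Because $\mathcal{L}$ is real and symmetric, the spectral theorem guarantees that all eigenvalues are real.

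The cornerstone of the remaining claims is the quadratic-form identity
\begin{equation*}
	x^\top \mathcal{L} x = \tfrac{1}{2} \sum_{i,j} a_{ij}(x_i - x_j)^2 \,,
\end{equation*}
which I would verify by expanding $x^\top \mathcal{D} x - x^\top \mathcal{A} x$, substituting $d_i = \sum_j a_{ij}$, and symmetrizing the cross terms using $a_{ij} = a_{ji}$. Since every weight is nonnegative, this expression is nonnegative for all $x$, so $\mathcal{L} \succeq 0$ and all eigenvalues are nonnegative. Combining this with the fact that $0$ is an eigenvalue yields the ordering $0 = \lambda_1 \le \lambda_2 \le \cdots \le \lambda_N$.

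For the connectivity characterization I would analyse the kernel of $\mathcal{L}$ through the same identity. If $x^\top \mathcal{L} x = 0$, then each term $a_{ij}(x_i - x_j)^2$ must vanish; since $a_{ij} > 0$ exactly on the edges, this forces $x_i = x_j$ whenever $(i,j) \in \mathcal{E}$, so $x$ is constant on each connected component. When the graph is connected, $x$ must therefore be a scalar multiple of $\mathbf{1}$, the kernel is one-dimensional, the eigenvalue $0$ has multiplicity one, and hence $\lambda_2 > 0$. Conversely, if the graph has $k \ge 2$ components, the indicator vectors of the components furnish $k$ linearly independent elements of $\ker \mathcal{L}$, so the multiplicity of $0$ is at least two and $\lambda_2 = 0$.

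I expect the connectivity equivalence to be the main obstacle, as it is the only step where the combinatorial notion of connectivity must be translated into the algebraic statement about the dimension of $\ker \mathcal{L}$. The remaining claims are routine once the quadratic-form identity is established, so I would invest the care there in arguing both implications cleanly — in particular, in justifying that a vector constant on each component but satisfying $\mathcal{L} x = 0$ exhausts the kernel.
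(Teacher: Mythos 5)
Your proof is correct, and there is nothing in the paper to diverge from: the paper states this as a fact imported from \cite[Ch. 6]{BulloNetwork} without proof, and your argument is precisely the standard one found in such references (symmetry and $\mathcal{L}\mathbf{1}=0$ from the definition, positive semidefiniteness and the kernel analysis via the identity $x^\top \mathcal{L} x = \tfrac{1}{2}\sum_{i,j} a_{ij}(x_i - x_j)^2$). The only step left implicit is the passage from $x^\top \mathcal{L} x = 0$ to $\mathcal{L}x = 0$, which you should justify by positive semidefiniteness (writing $\mathcal{L} = M^\top M$ gives $x^\top \mathcal{L} x = \| Mx \|^2$), together with the observation that the multiplicity of the eigenvalue $0$ equals $\dim \ker \mathcal{L}$ since $\mathcal{L}$ is symmetric and hence diagonalizable.
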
 

In this work we only consider connected graphs ($d_{\text{min}}, \lambda_{2}>0$). An important differential equation that often arises in multi-agent systems is the so-called \textit{Laplacian flow}, $\dot{x}=-\mathcal{L}x$. 
\iffalse
Since all the eigenvalues of $-\mathcal{L}$ have non-positive real part, and only one eigenvalue is at zero, the Laplacian flow for connected graphs is stable (but not asymptotically stable). This property can also be explained by invoking Lyapunov theory and considering $V(x)=x^\top \mathcal{V} x$ as Lyapunov function, where $\mathcal{V}=\text{diag}(v)$ and $v$ is the eigenvector associated to the zero eigenvalue of $\mathcal{L}$. In the following, we report a simplified version of this result to reflect the fact that we only consider undirected graphs (and not digraphs), thus $p=\mathbf{1}$. 
\fi

\begin{fact}
	\cite{ZhangLaplacianFlow}
	Let $\mathcal{L}$ be the Laplacian matrix of an undirected graph. Then, $\mathcal{V}=I$ satisfies the Lyapunov inequality $\mathcal{V} \mathcal{L} + \mathcal{L} \mathcal{V}^\top \succeq 0$.
\end{fact}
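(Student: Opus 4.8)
The plan is to substitute the candidate $\mathcal{V}=I$ directly into the Lyapunov expression and exploit the symmetry of the Laplacian, so that everything collapses to a known property. First I would observe that, since the graph is undirected, $\mathcal{L}$ is symmetric by Fact \ref{fact:Laplacian}, hence $\mathcal{L}^\top = \mathcal{L}$; moreover $\mathcal{V}=I$ is trivially symmetric, so $\mathcal{V}^\top = I$. Substituting these into the left-hand side of the claimed inequality then reduces it to
\begin{equation*}
	\mathcal{V}\mathcal{L} + \mathcal{L}\mathcal{V}^\top = I\mathcal{L} + \mathcal{L}I = 2\mathcal{L} \,.
\end{equation*}

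The only remaining step is to invoke the positive semidefiniteness of the Laplacian, again furnished by Fact \ref{fact:Laplacian}, which gives $\mathcal{L}\succeq 0$ and therefore $2\mathcal{L}\succeq 0$. This establishes the inequality $\mathcal{V}\mathcal{L} + \mathcal{L}\mathcal{V}^\top \succeq 0$ and completes the argument.

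There is no genuine obstacle here: the result is immediate once one recognizes that the identity commutes with every matrix and that the symmetry of $\mathcal{L}$ erases any distinction between $\mathcal{L}$ and $\mathcal{L}^\top$. The substance of the statement lies not in the proof but in identifying $\mathcal{V}=I$ as a valid---and particularly convenient---certificate for the Lyapunov inequality associated with the Laplacian flow $\dot{x}=-\mathcal{L}x$, which is precisely what renders the subsequent bounds on the Laplacian pseudoinverse tractable.
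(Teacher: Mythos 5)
Your proof is correct: with $\mathcal{V}=I$ the left-hand side collapses to $2\mathcal{L}$, which is positive semidefinite precisely by the undirected-Laplacian properties recorded in Fact \ref{fact:Laplacian}. The paper states this result as a cited Fact without giving its own proof, and your direct substitution argument is the evident intended justification, so there is nothing further to reconcile.
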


Next, we generalize this result to extended Laplacian flows of the form $\dot{x}=-(\mathcal{L} \otimes I)x$.
Note that $\mathcal{L} \otimes I$ is still a Laplacian matrix, thus retains all the associated properties previously described. We show in the next result that any PSD block diagonal matrix with identical blocks defines a valid Lyapunov function for the extended Laplacian flow.

\begin{prop}\label{lem:laplacianflow}
	Let $L \coloneqq (\mathcal{L} \otimes I)$, where $\mathcal{L}$ is the Laplacian matrix of an undirected graph. Then, any matrix of the form $V=I \otimes \mathcal{V}$, with $\mathcal{V}\succeq 0$, satisfies the Lyapunov inequality $VL + L V^\top \succeq 0$.
\end{prop}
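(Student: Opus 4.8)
The plan is to exploit the mixed-product property of the Kronecker product, namely $(A \otimes B)(C \otimes D) = (AC) \otimes (BD)$, which reduces the entire statement to a one-line computation. First I would expand the two terms of the Lyapunov expression separately. With $V = I \otimes \mathcal{V}$ and $L = \mathcal{L} \otimes I$, the mixed-product rule gives $VL = (I\mathcal{L}) \otimes (\mathcal{V}I) = \mathcal{L} \otimes \mathcal{V}$. For the second term I would first note that $V^\top = I \otimes \mathcal{V}^\top = I \otimes \mathcal{V}$, since $\mathcal{V} \succeq 0$ is symmetric, and then apply the same rule to obtain $LV^\top = (\mathcal{L}I) \otimes (I\mathcal{V}) = \mathcal{L} \otimes \mathcal{V}$ as well.

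Adding the two terms yields the clean identity $VL + LV^\top = 2(\mathcal{L} \otimes \mathcal{V})$. From here the conclusion is immediate from the positive semidefiniteness of each factor: $\mathcal{L} \succeq 0$ because it is the Laplacian of an undirected graph (Fact \ref{fact:Laplacian}), and $\mathcal{V} \succeq 0$ by hypothesis. Since the Kronecker product of two PSD matrices is again PSD---the very fact invoked in the proof of Fact \ref{Fact:psd}, \cite[Cor. 4.2.13]{HornJohnsonMatrixTopics}---I conclude $\mathcal{L} \otimes \mathcal{V} \succeq 0$ and therefore $VL + LV^\top \succeq 0$.

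I expect no genuine obstacle here, as the argument is purely algebraic and every supporting result is already available in the appendix. The only point requiring a moment of care is ensuring that the two cross terms collapse to the \emph{same} expression, which hinges precisely on the symmetry of $\mathcal{V}$; without symmetry one would instead arrive at $\mathcal{L} \otimes (\mathcal{V} + \mathcal{V}^\top)$, which remains PSD but would require the additional elementary observation that $\mathcal{V} + \mathcal{V}^\top \succeq 0$ whenever $\mathcal{V} \succeq 0$. This result also generalizes the preceding scalar-weight fact, which recovers the special case $V = I$ (equivalently $\mathcal{V} = I$) where the inequality reduces to $2\mathcal{L} \succeq 0$.
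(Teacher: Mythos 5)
Your proof is correct and follows essentially the same route as the paper: both rest on the mixed-product property of the Kronecker product and the fact that the Kronecker product of PSD matrices is PSD. The only cosmetic difference is that you use the symmetry implicit in $\mathcal{V}\succeq 0$ to collapse the sum to $2(\mathcal{L}\otimes\mathcal{V})$, whereas the paper keeps the form $\mathcal{L}\otimes(\mathcal{V}+\mathcal{V}^\top)$ --- the very expression you correctly identify as the fallback without symmetry.
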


\begin{proof}
	We can write 
	 \begin{align*}
			VL + LV^\top & = (I \otimes \mathcal{V}) (\mathcal{L} \otimes I) + (\mathcal{L} \otimes I)(I \otimes \mathcal{V})^\top \\ 
			& = \mathcal{L} \otimes \mathcal{V} + \mathcal{L} \otimes \mathcal{V}^\top \\
			& = \mathcal{L} \otimes (\mathcal{V} + \mathcal{V}^\top) \succeq 0 \,,
	\end{align*}
	where in the second and third equality we used the mixed-product property \cite[Lemma 4.2.10]{HornJohnsonMatrixTopics} and the associative property \cite[Prop. 4.2.8]{HornJohnsonMatrixTopics} of the Kronecker product, respectively. The last inequality follows from the fact that the Kronecker product of PSD matrices is a PSD matrix \cite[Cor. 4.2.13]{HornJohnsonMatrixTopics}. 
\end{proof}

Although $\mathcal{L}$ is not invertible, its pseudoinverse \cite[p. 453]{HornJohnsonMatrix} $\mathcal{L}\psd$ enjoys important properties.
\begin{fact} \label{fact:Laplacianpseudo}
	\cite[Lemma 6.12]{BulloNetwork}
	The Laplacian pseudoinverse is symmetric, PSD, row-stochastic (\textit{i.e.}, $\mathcal{L}\psd \mathbf{1} = 0$), and, moreover, $\mathcal{L}\psd \mathcal{L} = \mathcal{L} \mathcal{L}\psd = I - \tfrac{1}{N}\mathbf{1} \mathbf{1}\'$.
\end{fact}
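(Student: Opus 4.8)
The plan is to leverage the spectral decomposition of the symmetric Laplacian, which is available because Fact~\ref{fact:Laplacian} guarantees that $\mathcal{L}$ is symmetric and PSD with real spectrum $0 = \lambda_1 < \lambda_2 \le \cdots \le \lambda_N$ for a connected graph. First I would write $\mathcal{L} = U \Lambda U^\top$ with $U$ orthogonal and $\Lambda = \text{diag}(0, \lambda_2, \ldots, \lambda_N)$, choosing the first column of $U$ to be $\tfrac{1}{\sqrt{N}}\mathbf{1}$, which is the normalized eigenvector for the zero eigenvalue since $\mathcal{L}\mathbf{1} = 0$. The Moore--Penrose pseudoinverse then admits the explicit form $\mathcal{L}\psd = U \Lambda\psd U^\top$ with $\Lambda\psd = \text{diag}(0, 1/\lambda_2, \ldots, 1/\lambda_N)$.

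From this representation the first three claims are immediate. Symmetry follows because $\Lambda\psd$ is diagonal, so $(\mathcal{L}\psd)^\top = U (\Lambda\psd)^\top U^\top = \mathcal{L}\psd$. Positive semidefiniteness holds because the eigenvalues of $\mathcal{L}\psd$ are $0$ together with the reciprocals $1/\lambda_i > 0$, all nonnegative. The identity $\mathcal{L}\psd \mathbf{1} = 0$ follows because $\mathbf{1}$ is, up to scaling, the first column of $U$ and the corresponding diagonal entry of $\Lambda\psd$ is zero; equivalently, $\mathbf{1}$ lies in the kernel of $\mathcal{L}\psd$ exactly as it does for $\mathcal{L}$.

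For the final identity I would compute $\mathcal{L}\psd \mathcal{L} = U \Lambda\psd \Lambda U^\top$ and observe that $\Lambda\psd \Lambda = \text{diag}(0, 1, \ldots, 1)$, since each nonzero eigenvalue cancels its reciprocal. Hence $\mathcal{L}\psd \mathcal{L}$ is the orthogonal projector onto the range of $\mathcal{L}$, namely $\mathbf{1}^\perp$; its complementary projector onto $\text{span}\{\mathbf{1}\}$ is $\tfrac{1}{N}\mathbf{1}\mathbf{1}^\top$, giving $\mathcal{L}\psd \mathcal{L} = I - \tfrac{1}{N}\mathbf{1}\mathbf{1}^\top$, and $\mathcal{L}\mathcal{L}\psd$ yields the same expression by symmetry of both factors. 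No step presents a genuine obstacle, as the result is standard; the only point requiring care is verifying that $\tfrac{1}{\sqrt{N}}\mathbf{1}$ is the eigenvector associated with the \emph{unique} zero eigenvalue (using connectedness, $\lambda_2 > 0$), which pins down the form of the projector and ensures the pseudoinverse annihilates $\mathbf{1}$.
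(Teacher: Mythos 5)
Your proof is correct. Note, however, that the paper itself offers no proof of this statement: it is quoted verbatim as a known result, with a citation to \cite[Lemma 6.12]{BulloNetwork}, so there is no in-paper argument to compare against. Your spectral route is the standard one (and essentially the argument behind the cited reference): diagonalize $\mathcal{L} = U\Lambda U^\top$ with $u_1 = \tfrac{1}{\sqrt{N}}\mathbf{1}$, take $\mathcal{L}\psd = U\Lambda\psd U^\top$, and read off all four claims at once, with $\mathcal{L}\psd\mathcal{L} = \mathcal{L}\mathcal{L}\psd$ being the orthogonal projector onto $\mathbf{1}^\perp$. Two small points worth making explicit: (i) the formula $\mathcal{L}\psd = U\Lambda\psd U^\top$ should be justified by checking the four Moore--Penrose conditions (immediate for diagonal $\Lambda$, and conjugation by the orthogonal $U$ preserves them); (ii) you correctly identified the one place where a hypothesis beyond symmetry and PSD-ness is needed --- the identity $\mathcal{L}\psd\mathcal{L} = I - \tfrac{1}{N}\mathbf{1}\mathbf{1}^\top$ requires the kernel of $\mathcal{L}$ to be exactly $\operatorname{span}\{\mathbf{1}\}$, i.e.\ $\lambda_2 > 0$, which holds under the paper's standing assumption of connected graphs (Fact~\ref{fact:Laplacian}); for a disconnected graph the projector would instead have one block of the form $\tfrac{1}{N_k}\mathbf{1}\mathbf{1}^\top$ per component.
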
 
\begin{fact}\label{FactRegLaplacian} \cite[E6.11]{BulloNetwork}
	The \textit{regularized} Laplacian $\mathcal{L}+\tfrac{\beta}{N}\mathbf{1}\mathbf{1}\'$ is PD for any $\beta>0$, and its inverse is $\mathcal{L}\psd+\tfrac{1}{\beta N}\mathbf{1}\mathbf{1}\'$.
\end{fact}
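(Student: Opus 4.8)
The plan is to exploit the spectral structure of $\mathcal{L}$ for the definiteness claim and to verify the inverse by a direct (and short) multiplication, leaning entirely on the pseudoinverse identities in Fact \ref{fact:Laplacianpseudo}. Throughout it is convenient to write $P \coloneqq \tfrac{1}{N}\mathbf{1}\mathbf{1}\'$ for the orthogonal projector onto $\mathrm{span}(\mathbf{1})$, so that the regularized Laplacian is $\mathcal{L}+\beta P$ and the claimed inverse is $\mathcal{L}\psd+\tfrac{1}{\beta}P$.

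For positive definiteness, I would invoke Fact \ref{fact:Laplacian}: on a connected graph $\mathcal{L}$ has an orthonormal eigenbasis $v_1=\mathbf{1}/\sqrt{N},v_2,\dots,v_N$ with $0=\lambda_1<\lambda_2\le\dots\le\lambda_N$. Since $P=v_1v_1\'$, the regularized Laplacian decomposes as $\mathcal{L}+\beta P=\beta\,v_1v_1\'+\sum_{i=2}^N\lambda_i v_iv_i\'$, whose eigenvalues are $\beta,\lambda_2,\dots,\lambda_N$. All of these are strictly positive whenever $\beta>0$, so $\mathcal{L}+\tfrac{\beta}{N}\mathbf{1}\mathbf{1}\'\succ0$; note this is exactly where the standing connectedness assumption ($\lambda_2>0$) enters.

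For the inverse I would avoid re-diagonalizing and instead multiply the two candidate matrices directly. The three ingredients are $\mathcal{L}\mathbf{1}=0$, $\mathcal{L}\psd\mathbf{1}=0$, and $\mathcal{L}\mathcal{L}\psd=I-P$, all from Facts \ref{fact:Laplacian}--\ref{fact:Laplacianpseudo}. Expanding
\begin{equation*}
(\mathcal{L}+\beta P)\!\left(\mathcal{L}\psd+\tfrac{1}{\beta}P\right)=\mathcal{L}\mathcal{L}\psd+\tfrac{1}{\beta}\mathcal{L}P+\beta P\mathcal{L}\psd+P^2,
\end{equation*}
the two cross terms vanish because $\mathcal{L}P=\tfrac{1}{N}(\mathcal{L}\mathbf{1})\mathbf{1}\'=0$ and, using symmetry of $\mathcal{L}\psd$, $P\mathcal{L}\psd=\tfrac{1}{N}\mathbf{1}(\mathcal{L}\psd\mathbf{1})\'=0$, while $P^2=P$. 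The product therefore collapses to $(I-P)+P=I$, establishing the inverse.

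I do not anticipate a genuine obstacle; the statement is elementary once the pseudoinverse facts are in hand. The only place demanding a little care is the bookkeeping of the projector: one must check that $P$ annihilates both $\mathcal{L}$ and $\mathcal{L}\psd$ from either side, which all reduces to $\mathbf{1}$ lying in their common kernel. A fully spectral argument would also work for the inverse (matching eigenvalues block by block), but the direct multiplication is the cleaner route.
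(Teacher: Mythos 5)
Your proof is correct: the spectral decomposition $\mathcal{L}+\beta P=\beta v_1v_1^\top+\sum_{i\ge2}\lambda_i v_iv_i^\top$ gives eigenvalues $\beta,\lambda_2,\dots,\lambda_N$ (correctly pinpointing that the standing connectedness assumption $\lambda_2>0$ is what makes the matrix PD), and the direct multiplication using $\mathcal{L}\mathbf{1}=0$, $\mathcal{L}^\dagger\mathbf{1}=0$, and $\mathcal{L}\mathcal{L}^\dagger=I-\tfrac{1}{N}\mathbf{1}\mathbf{1}^\top$ collapses the product to $I$, which for square matrices suffices to identify the inverse. Note the paper supplies no proof of its own here — the statement is cited as an exercise from \cite{BulloNetwork} — and your verification is precisely the standard argument that exercise calls for, so there is nothing to compare beyond observing that your write-up fills in what the paper leaves to the reference.
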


\begin{lem}\label{Fact:Laplacianbound}
	Let $\mathcal{L}$, $\mathcal{A}$, and $\mathcal{D}$ be the Laplacian, adjacency, and degree matrix of an undirected connected graph. Then,
	\begin{subequations}
		\begin{align}
			\mathcal{L} & \preceq 2\mathcal{D},  \label{Fact:Laplacianbound1} \\
			\mathcal{L}\psd + \tfrac{1}{d_{\text{min}} N}\mathbf{1}\mathbf{1}\' & \succeq \tfrac{1}{3}\mathcal{D}^{-1}. \label{Fact:Laplacianbound2}
		\end{align}
	\end{subequations}
\end{lem}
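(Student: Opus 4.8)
The plan is to treat the two inequalities separately, with the second building on the first. For \eqref{Fact:Laplacianbound1}, I would start from the identity $2\mathcal{D} - \mathcal{L} = \mathcal{D} + \mathcal{A}$, which follows at once from $\mathcal{L} = \mathcal{D} - \mathcal{A}$. The matrix $\mathcal{D} + \mathcal{A}$ is the \emph{signless Laplacian}, and I would establish its positive semidefiniteness by exhibiting its quadratic form as a sum of squares: for any $x$, using $d_i = \sum_j a_{ij}$ and the symmetry $a_{ij} = a_{ji}$, one has $x\'(\mathcal{D}+\mathcal{A})x = \sum_{i,j} a_{ij}x_i^2 + \sum_{i,j} a_{ij}x_i x_j = \tfrac{1}{2}\sum_{i,j} a_{ij}(x_i + x_j)^2 \ge 0$. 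Hence $2\mathcal{D} - \mathcal{L} \succeq 0$, which is \eqref{Fact:Laplacianbound1}.

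For \eqref{Fact:Laplacianbound2}, the key idea is to recognize the left-hand side as the inverse of a regularized Laplacian. Applying Fact \ref{FactRegLaplacian} with $\beta = d_{\text{min}}$ gives $\mathcal{L}\psd + \tfrac{1}{d_{\text{min}} N}\mathbf{1}\mathbf{1}\' = \big(\mathcal{L} + \tfrac{d_{\text{min}}}{N}\mathbf{1}\mathbf{1}\'\big)^{-1}$, and this regularized Laplacian is positive definite. Since $\tfrac{1}{3}\mathcal{D}^{-1} = (3\mathcal{D})^{-1}$ is also positive definite (here connectedness enters through $d_{\text{min}} > 0$, so $\mathcal{D}\succ 0$), I would invoke the antitone property of matrix inversion on positive definite matrices \cite[Cor. 7.7.4]{HornJohnsonMatrix}: the target inequality is then \emph{equivalent} to $\mathcal{L} + \tfrac{d_{\text{min}}}{N}\mathbf{1}\mathbf{1}\' \preceq 3\mathcal{D}$.

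It then remains to verify this last operator inequality. Substituting $\mathcal{L} = \mathcal{D} - \mathcal{A}$, the claim becomes $3\mathcal{D} - \mathcal{L} - \tfrac{d_{\text{min}}}{N}\mathbf{1}\mathbf{1}\' = 2\mathcal{D} + \mathcal{A} - \tfrac{d_{\text{min}}}{N}\mathbf{1}\mathbf{1}\' \succeq 0$, and I would bound the two contributions separately. Writing $2\mathcal{D}+\mathcal{A} = \mathcal{D} + (\mathcal{D}+\mathcal{A})$ and using $\mathcal{D}+\mathcal{A}\succeq 0$ from the first part together with $\mathcal{D} \succeq d_{\text{min}} I$ yields $2\mathcal{D}+\mathcal{A} \succeq d_{\text{min}} I$; and since $\mathbf{1}\mathbf{1}\'$ is rank one with largest eigenvalue $N$, we have $\tfrac{d_{\text{min}}}{N}\mathbf{1}\mathbf{1}\' \preceq d_{\text{min}} I$. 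Combining, $2\mathcal{D}+\mathcal{A} - \tfrac{d_{\text{min}}}{N}\mathbf{1}\mathbf{1}\' \succeq d_{\text{min}} I - d_{\text{min}} I = 0$, which closes the argument.

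The point requiring the most care is the reduction in the second part: one must confirm that \emph{both} $\mathcal{L} + \tfrac{d_{\text{min}}}{N}\mathbf{1}\mathbf{1}\'$ and $3\mathcal{D}$ are genuinely positive definite before reversing the inequality under inversion, which is exactly where Fact \ref{FactRegLaplacian} and connectedness are used. I expect the constant $3$ to be essentially forced: it is precisely the value for which the final operator inequality still follows from the crude eigenvalue bounds $2\mathcal{D}+\mathcal{A}\succeq d_{\text{min}}I$ and $\tfrac{d_{\text{min}}}{N}\mathbf{1}\mathbf{1}\'\preceq d_{\text{min}}I$, leaving little slack to absorb the rank-one correction more loosely.
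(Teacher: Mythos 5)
Your proof is correct and follows essentially the same route as the paper's: the identity $2\mathcal{D}-\mathcal{L}=\mathcal{D}+\mathcal{A}$ for the first bound, then $\mathcal{L}+\tfrac{d_{\text{min}}}{N}\mathbf{1}\mathbf{1}\' \preceq 3\mathcal{D}$ followed by inversion through Fact~\ref{FactRegLaplacian} for the second. The only (harmless) micro-differences are that you certify $\mathcal{D}+\mathcal{A}\succeq 0$ by the sum-of-squares form $\tfrac{1}{2}\sum_{i,j}a_{ij}(x_i+x_j)^2$ where the paper invokes diagonal dominance via Geršgorin, and you sandwich the rank-one term through $d_{\text{min}}I$ where the paper applies Weyl's inequality directly.
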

\vspace{0.2cm}
\begin{proof}
	The first bound can be proved by considering that $2\mathcal{D}-\mathcal{L} = 2\mathcal{D}-(\mathcal{D} - \mathcal{A}) = \mathcal{D} + \mathcal{A}$. Since $\mathcal{D} + \mathcal{A}$ is symmetric, nonnegative, and diagonally dominant, we can invoke Geršgorin circle theorem \cite[Theorem 6.1.1]{HornJohnsonMatrix} to conclude that $\mathcal{D}+\mathcal{A}\succeq 0$ and hence $\mathcal{L}\preceq 2\mathcal{D}$.
	
	As for the second bound, we note that the spectrum of $\tfrac{d_{\text{min}}}{N}\mathbf{1}\mathbf{1}\'$ is $\{ d_{\text{min}}, 0, \ldots, 0 \}$. Then, since $\lambda_{\text{max}}(\tfrac{d_{\text{min}}}{ N}\mathbf{1}\mathbf{1}\') \le \lambda_{\text{min}}(\mathcal{D})$, one can conclude that $\tfrac{d_{\text{min}}}{N}\mathbf{1}\mathbf{1}\' \preceq \mathcal{D}$ as a consequence of Weyl's inequality \cite[Theorem 4.3.1]{HornJohnsonMatrix}. Combining with \eqref{Fact:Laplacianbound1} leads to $\mathcal{L} + \tfrac{d_{\text{min}}}{N}\mathbf{1}\mathbf{1}\' \preceq 3\mathcal{D}$. Since both sides of the inequality are PD matrices, we can invert the relation and verify via Fact \ref{FactRegLaplacian} that $\mathcal{L}\psd + \tfrac{1}{d_{\text{min}} N}\mathbf{1}\mathbf{1}\' \succeq \tfrac{1}{3}\mathcal{D}^{-1}$.
\end{proof}

\section{Proof of Theorem \ref{TheoremBounds}} \label{appendix:theorem}
	
Recall that we work under Laplacian coupling ($H={\modif -} L$) and, to facilitate readability, we highlight in \textcolor{magenta}{magenta} the final conditions that appear in \eqref{LMIlocalprimal}.
	
\paragraph*{\normalsize Proof of LMI \eqref{Thm1:first}}
	By setting \textcolor{magenta}{$S=\tfrac{1}{2}\alpha I$}, \eqref{LMI global} becomes $\alpha L - LRL - Q \succ 0$ which, for \textcolor{magenta}{$R\succ 0$}, holds if and only if
	\begin{equation}\label{LMI0}
		\begin{bmatrix}
			\alpha L - Q & L \\ \star & R\inv
		\end{bmatrix} \succ 0 \,.
	\end{equation}
	Invoking Fact \ref{Fact:psd}, we infer that \eqref{LMI0} holds if $Q \prec -(1-\alpha)L$ and $R\inv \succ L \succeq 0$. Note that $L \succeq 0$ holds because $L$ is a Laplacian matrix (see Fact \ref{fact:Laplacian}), and define the matrix $D \coloneq \mathcal{D} \otimes I$. Then, $R\inv \succ L$ is implied by $R\inv \succ 2D$ because of Lemma~\ref{Fact:Laplacianbound}, which in turn holds if and only if \textcolor{magenta}{$R \prec \tfrac{1}{2}D\inv$}. To decouple the condition $Q \prec -(1-\alpha)L$ we consider two separate cases:
	
	\begin{itemize}
		\item $\alpha<1$. Since $1-\alpha > 0$, one can exploit Lemma~\ref{Fact:Laplacianbound} to infer that $Q \prec -(1-\alpha)L$ is implied by $Q\prec -(1-\alpha)2D$.
		
		\item $\alpha \ge 1$. In this case, since $1-\alpha\le0$, by observing that $L\succeq 0$ one can can simply impose $Q\prec 0$.
	\end{itemize}
	
	By merging the previous two conditions, and recalling that $\tilde{\alpha} = \max \{ 1-\alpha, 0 \}$, we conclude that $Q \prec (\alpha-1)L$ is implied by \textcolor{magenta}{$Q \prec - 2 \tilde{\alpha} D$}. Finally, the LMI \eqref{Thm1:first} in Theorem \ref{TheoremBounds} follows by noting that $Q$, $S$, $R$, and $D$ are block diagonal matrices. \hfill \QED
	
	\vspace{0.1cm}
	
\paragraph*{\normalsize Proof of LMI \eqref{Thm1:second}}
	Note that \eqref{LMI global} is implied by 
	\begin{subequations}
		\begin{align}
			SL+ L S\' \succeq 0 \,, \label{first} \\
			-LRL - Q \succ 0 \,. \label{second}
		\end{align}
	\end{subequations}
	By noticing that \eqref{first} is a Lyapunov inequality for the extended Laplacian flow $\dot{x} = -Lx = -(\mathcal{L} \otimes I)x$, we can invoke Proposition \ref{lem:laplacianflow} to state that \eqref{first} holds whenever $S$ is a block diagonal matrix with identical PSD blocks, \textcolor{magenta}{$S=I\otimes \mathcal{S} \succeq 0$}.
	If \textcolor{magenta}{$R\succ0$}, by the Schur complement \eqref{second} holds if and only if 
	\begin{equation*}
		\begin{bmatrix}
			-Q & L \\ \star & R^{-1}
		\end{bmatrix} \succ 0 \,,
	\end{equation*}
	which in turn is verified if $Q\prec -L$ and $R^{-1}\succ L$ by Fact \ref{Fact:psd}. We already showed in the proof of LMI \eqref{Thm1:first} that the latter inequality is implied by \textcolor{magenta}{$R \prec \tfrac{1}{2}D\inv$}. Following a similar reasoning based on Lemma \ref{Fact:Laplacianbound}, we claim that the former inequality is implied by \textcolor{magenta}{$Q\prec -2D$}. \hfill \QED
	
	\vspace{0.1cm}
	
\paragraph*{\normalsize Proof of LMI \eqref{Thm1:third}}
	By the Schur complement, if \textcolor{magenta}{$R \succ 0$}, the LMI \eqref{LMI global} is feasible if and only if 
	\begin{equation}\label{LMIa}
		\begin{bmatrix}
			LS\' + SL - Q & L \\ \star & R\inv
		\end{bmatrix} \succ 0 \,.
	\end{equation}
	Invoking Fact \ref{Fact:psd}, we infer that \eqref{LMIa} holds if $LS\' + SL - Q \succ L$ and $R\inv \succ L \succeq 0$. As previously discussed, the latter condition is implied by \textcolor{magenta}{$R \prec \tfrac{1}{2}D\inv$}.
	On the other hand, recalling that $L L\psd L = L$ and $L \mathbf{1} = 0$, $LS\' + SL - Q \succ L$ can be written as
	\begin{equation*}
		\begin{bmatrix}
			I \\ L
		\end{bmatrix}\' \begin{bmatrix}
			-Q-2L & S \\ \star & L\psd +\tfrac{1}{d_{\emph{min}}N}\mathbf{1}\mathbf{1}\'
		\end{bmatrix} \begin{bmatrix}
			I \\ L
		\end{bmatrix} \succ 0 \,,
	\end{equation*}
	which, according to Fact \ref{Fact:congruence}, is feasible if
	\begin{equation}\label{LMIb}
		\begin{bmatrix}
			-Q-2L & S \\ \star & L\psd +\tfrac{1}{d_{\emph{min}}N}\mathbf{1}\mathbf{1}\'
		\end{bmatrix} \succ 0 \,.
	\end{equation}
	Thanks to Fact \ref{Fact:psd}, we know that \eqref{LMIb} holds if \textcolor{magenta}{$S\succeq 0$}, $Q+S \prec -2L$ and $S \prec L\psd +\tfrac{1}{d_{\emph{min}} N}\mathbf{1}\mathbf{1}\'$. Finally, according to Lemma~\ref{Fact:Laplacianbound}, the previous two conditions are implied by \textcolor{magenta}{$Q+S \prec -4D$} and \textcolor{magenta}{$S \prec \tfrac{1}{3}D^{\inv}$}, respectively. \hfill \QED
	
	\vspace{0.1cm}
	
	\paragraph*{\normalsize Proof of LMI \eqref{Thm1:fourth}} 
	We add and subtract $\tfrac{1}{2}L D^{-1} L$ to \eqref{LMI global} and obtain
	\begin{equation*}\label{LMI1}
		L S\' + SL - \tfrac{1}{2}Q - L (R - \tfrac{1}{2}D^{-1}) L - \tfrac{1}{2}Q - \tfrac{1}{2}L D^{-1} L \succ 0 \,.
	\end{equation*}	
	The latter condition is implied by 
	\begin{subequations}
		\begin{align}
			- \tfrac{1}{2}Q - \tfrac{1}{2}L D^{-1} L \succeq 0 \,, \label{secondLMI} \\
			L S\' + SL - \tfrac{1}{2}Q -L (R - \tfrac{1}{2}D^{-1}) L \succ 0 \,. \label{firstLMI}
		\end{align}
	\end{subequations}
	By the Schur complement, \eqref{secondLMI} holds if and only if 
	\begin{equation}
		\begin{bmatrix}
			-\tfrac{1}{2}Q & L \\ \star & 2D
		\end{bmatrix} \succeq 0,
	\end{equation}
	which, in turn, is verified if $Q \preceq -2L$ and $2D \succeq L$ (see Fact \ref{Fact:psd}). Note that, by invoking Lemma 1, $Q \preceq -2L$ is implied by \textcolor{magenta}{$Q\preceq -4D$}, and $2D \succeq L$ is always satisfied.	
	By re-writing \eqref{firstLMI} as
	\begin{equation}
		\begin{bmatrix}
			I \\ L
		\end{bmatrix}\' \begin{bmatrix}
			-\tfrac{1}{2}Q & S \\ \star & \tfrac{1}{2}D^{-1}-R
		\end{bmatrix} \begin{bmatrix}
			I \\ L
		\end{bmatrix} \succ 0,
	\end{equation}
	we can conclude that it is verified if (see Fact \ref{Fact:congruence})
	\begin{equation}
		\begin{bmatrix}
			-\tfrac{1}{2}Q & S \\ \star & \tfrac{1}{2}D^{-1}-R
		\end{bmatrix} \succ 0,
	\end{equation}
	which is implied by \textcolor{magenta}{$S\succeq 0$}, \textcolor{magenta}{$Q \prec -2S$} and \textcolor{magenta}{$R+S\prec \tfrac{1}{2}D\inv$}  (see Fact \ref{Fact:psd}).  \hfill \QED

\end{appendices}

%%%%%%%%%%%%%%%%%%%%%%%%%%%%%%%%%%%%%%%%%%%%%%%%%%%%%%%%%%%%

\begin{IEEEbiography}[{\includegraphics[trim={5cm 0 5cm 0},width=1in,height=1.25in,clip,keepaspectratio]{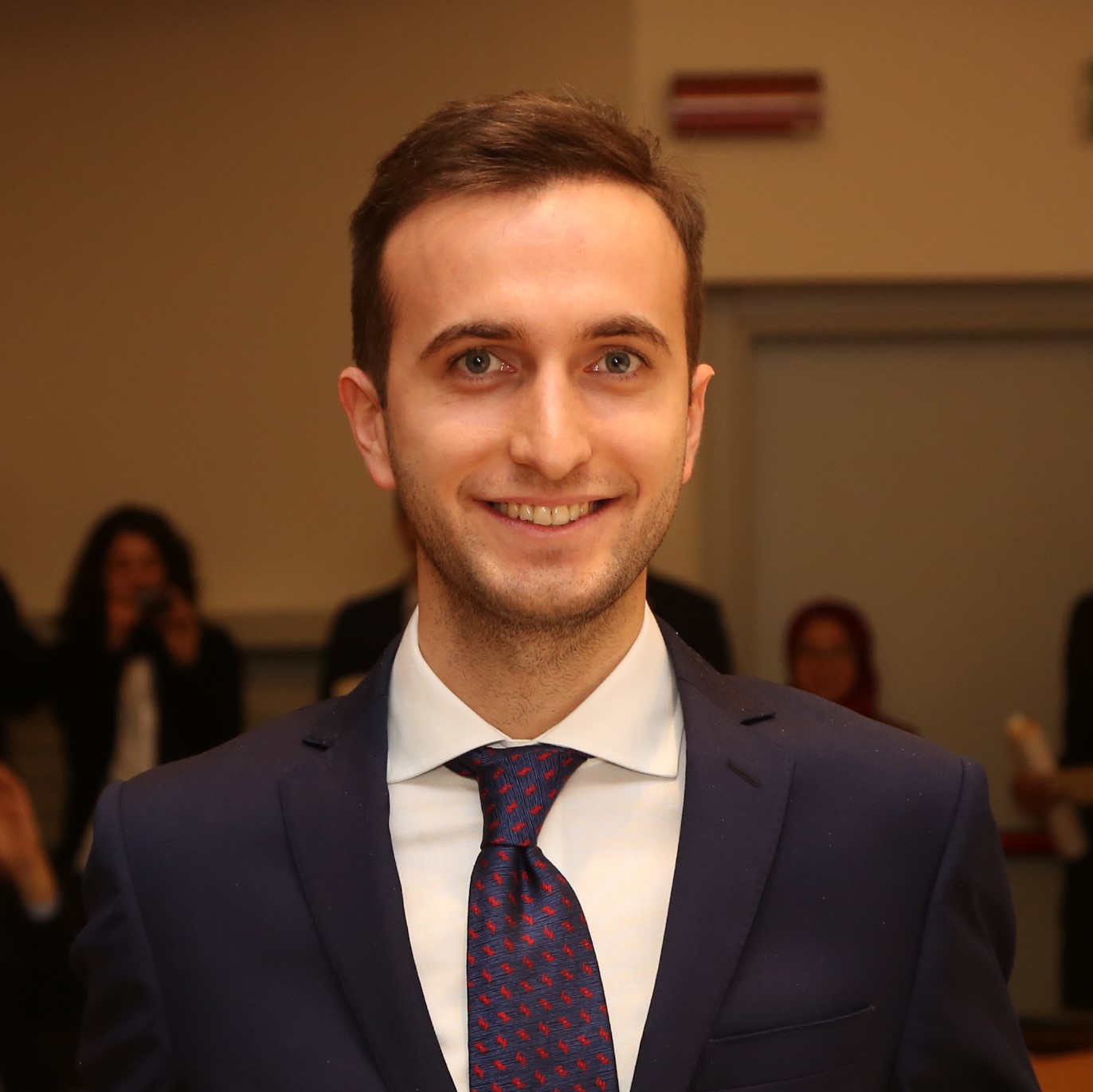}}]{Andrea Martinelli}
	received the B.Sc. degree in management engineering and the M.Sc. degree in control engineering from Politecnico di Milano, Italy, in 2015 and 2017, respectively. He obtained a PhD degree at the Automatic Control Laboratory, ETH Zürich, Switzerland, in 2024, where he currently holds a postdoctoral position. In 2017, he conducted his M.Sc. thesis at the Laboratoire d'Automatique, EPF Lausanne, Switzerland. In 2018, he was a Research Assistant with the Systems \& Control Group at Politecnico di Milano. His research interests focus on data-driven optimal control and control of interconnected systems. 
\end{IEEEbiography}

\begin{IEEEbiography}[{\includegraphics[width=1in,height=1.25in,clip,keepaspectratio]{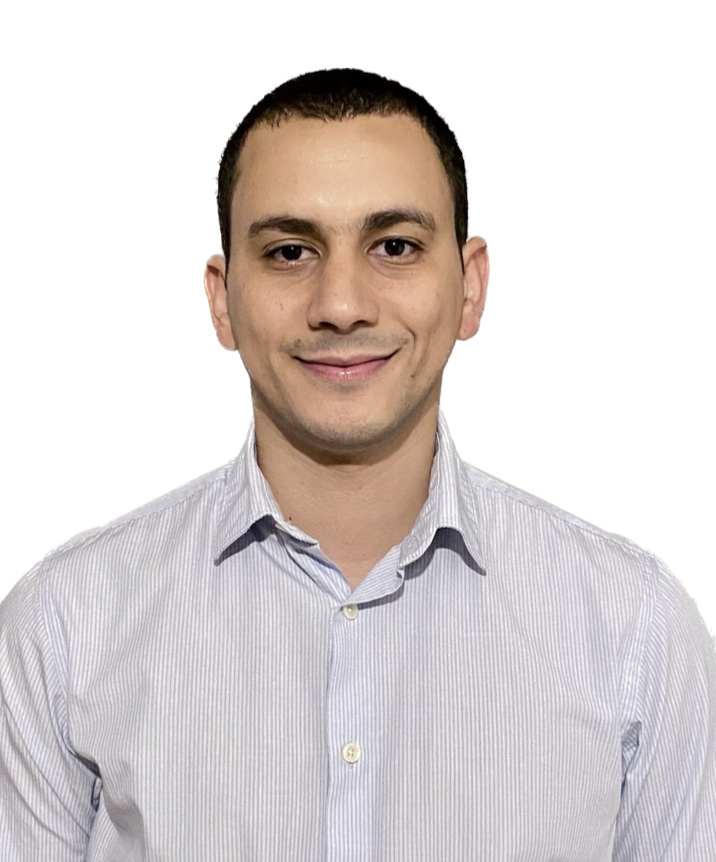}}]{Ahmed Aboudonia} is a postdoctoral researcher with the Automatic Control Laboratory (IfA) at ETH Zurich. He received the PhD degree in electrical engineering from ETH Zurich in 2023, the M.Sc. degree in control engineering from Sapienza University of Rome in 2018 and the B.Sc. degree in mechatronics engineering from the German University in Cairo in 2014. His research interests lie in the intersection of control theory, mathematical optimization and machine learning with application to robotics, energy and transportation systems.
	
\end{IEEEbiography}

\begin{IEEEbiography}[{\includegraphics[width=1in,height=1.25in,clip,keepaspectratio]{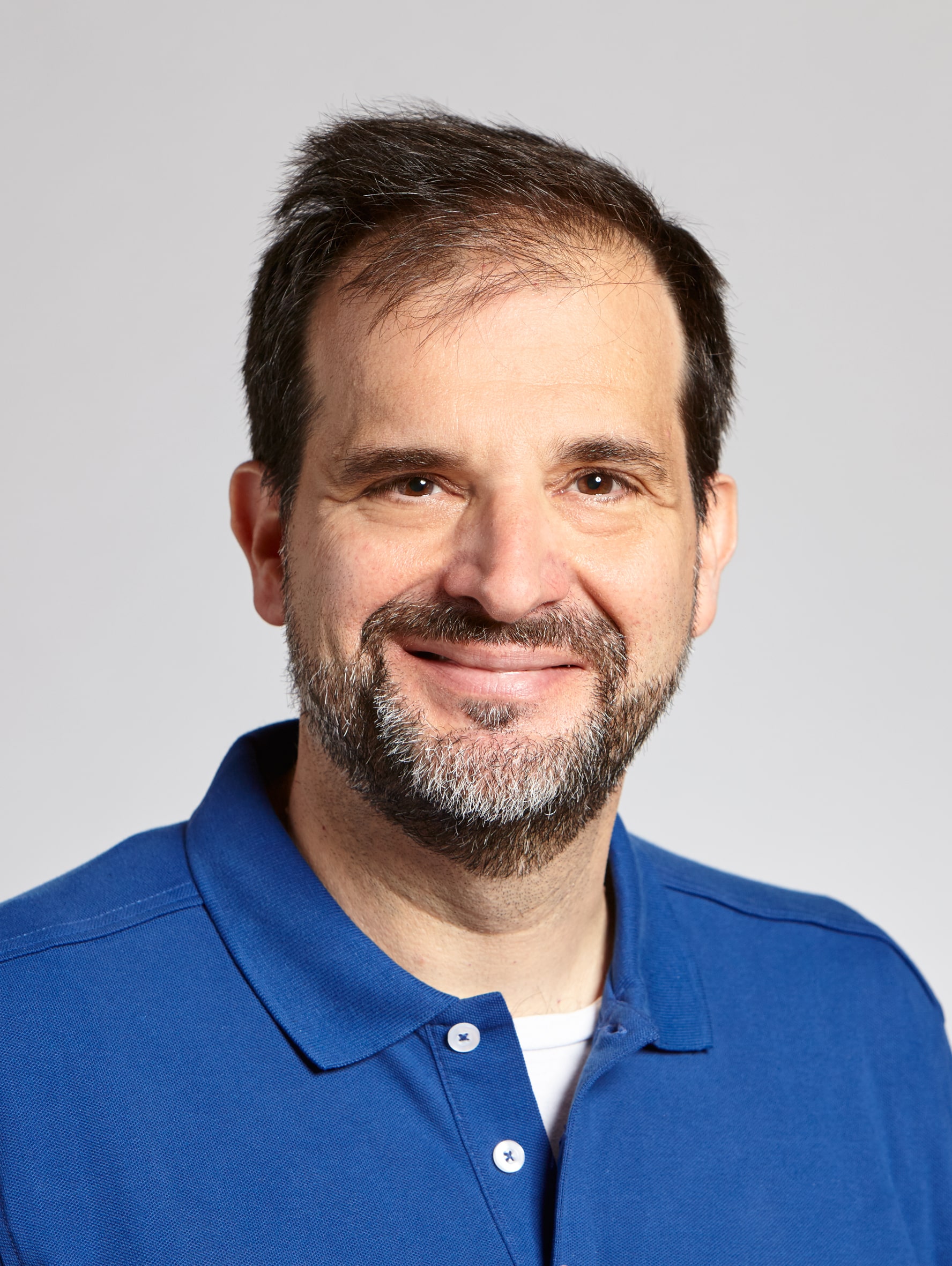}}]{John Lygeros} received a B.Eng. degree in 1990 and an M.Sc. degree in 1991 from Imperial College, London, U.K. and a Ph.D. degree in 1996 at the University of California, Berkeley. After research appointments at M.I.T., U.C. Berkeley and SRI International, he joined the University of Cambridge in 2000 as a University Lecturer. Between March 2003 and July 2006 he was an Assistant Professor at the Department of Electrical and Computer Engineering, University of Patras, Greece. In July 2006 he joined the Automatic Control Laboratory at ETH Zurich where he is currently serving as the Professor for Computation and Control and the Head of the laboratory. His research interests include modelling, analysis, and control of large-scale systems, with applications to biochemical networks, energy systems, transportation, and industrial processes. John Lygeros is a Fellow of IEEE, and a member of IET and the Technical Chamber of Greece. Between 2013 and 2023 he served as the Vice-President Finances and a Council Member of the International Federation of Automatic Control. Since 2020 he is serving as the Director of the National Center of Competence in Research ``Dependable Ubiquitous Automation" (NCCR Automation).
\end{IEEEbiography}

\end{document}